\newcommand{\showdate}{false}
\DeclareFontFamily{U} {MnSymbolC}{}
\DeclareFontShape{U}{MnSymbolC}{m}{n}{
  <-6> MnSymbolC5
  <6-7> MnSymbolC6
  <7-8> MnSymbolC7
  <8-9> MnSymbolC8
  <9-10> MnSymbolC9
  <10-12> MnSymbolC10
  <12-> MnSymbolC12}{}
\DeclareFontShape{U}{MnSymbolC}{b}{n}{
  <-6> MnSymbolC-Bold5
  <6-7> MnSymbolC-Bold6
  <7-8> MnSymbolC-Bold7
  <8-9> MnSymbolC-Bold8
  <9-10> MnSymbolC-Bold9
  <10-12> MnSymbolC-Bold10
  <12-> MnSymbolC-Bold12}{}
\DeclareSymbolFont{MnSyC} {U} {MnSymbolC}{m}{n}
\DeclareMathSymbol{\largepentagram}{\mathord}{MnSyC}{133}
\newcommand{\calz}{\mathcal{Z}}
\newcommand{\cala}{\mathcal{A}}
\newcommand{\calb}{\mathcal{B}}
\newcommand{\calg}{\mathcal{G}}
\newcommand{\calk}{\mathcal{K}}
\newcommand{\calt}{\mathcal{T}}
\newcommand{\calc}{\mathcal{C}}
\providecommand{\texorpdfstring}[2]{#1}
\newcommand{\tkc}{$(n{-}1)$-connected}
\newcommand{\bmp}{Bianchi-Massey tensor}
\newcommand{\nmt}{\mathcal{P}}
\newcommand{\nmtb}{\overline{\nmt}}
\newcommand{\cmap}{\largepentagram}
\DeclareMathAlphabet{\matheur}{U}{eur}{m}{n}
\newcommand{\rmap}{\matheur{m}}
\newcommand{\rfunc}{\matheur{R}}
\newcommand{\ndom}{\mathcal{D}}
\newcommand{\tripsp}{\kersym{H^* \otimes \kc^*}}
\newcommand{\trip}{\calt}
\newcommand{\wh}{\widehat}
\DeclareMathOperator{\im}{Im}
\DeclareMathOperator{\sgn}{sign}
\newcommand{\ie}{\emph{i.e.} }
\newcommand{\eg}{\emph{e.g.} }
\newcommand{\princ}{\mathcal{F}}
\newcommand{\pol}{P}
\newcommand{\sym}{\textup{Sym}}
\newcommand{\gsym}{\calg}
\newcommand{\agsym}{\slashed\calg}
\newcommand{\subsp}{L}
\newcommand{\kersym}[1]{K[#1]}
\newcommand{\bmsp}{\calb}
\newcommand{\dga}{\cala}
\newcommand{\dgab}{\calb}
\newcommand{\clalg}{\calz}
\newcommand{\kc}{E}
\newcommand{\half}{{\textstyle\frac{1}{2}}}
\newcommand{\third}{{\textstyle\frac{1}{3}}}
\newcommand{\quart}{\textstyle\frac{1}{4}}
\newcommand{\Z}{\mathbb{Z}}
\newcommand{\Q}{\mathbb{Q}}
\newcommand{\idf}{\matheur{I\mkern-2mu d}}
\newcommand{\Id}{\textup{Id}}
\newcommand{\Ker}{\mathop{\mathrm{Ker}}\nolimits}
\newcommand{\Coker}{\mathop{\mathrm{Coker}}\nolimits}
\newcommand{\Hom}{\mathop{\mathrm{Hom}}\nolimits}
\newcommand{\wt}[1]{\widetilde #1}
\newcommand{\gen}[1]{\langle#1\rangle}
\newtheorem{thm}{Theorem}[section]
\newtheorem{prop}[thm]{Proposition}
\newtheorem{lem}[thm]{Lemma}
\newtheorem{conj}[thm]{Conjecture}
\newtheorem{cor}[thm]{Corollary}
\theoremstyle{definition}
\newtheorem{defn}[thm]{Definition}
\theoremstyle{remark}
\newtheorem{rmk}[thm]{Remark}
\newtheorem*{rmk*}{Remark}
\newtheorem{ex}[thm]{Example}
\setlist{leftmargin=*}
\renewcommand{\hom}{\mathrm{Hom}}
\title{Rational homotopy and simply-connected 8-manifolds}
\author[Cs. Nagy]{Csaba Nagy}
\author[J. Nordström]{Johannes Nordström}
\address{Department of Mathematical Sciences,
University of Bath,
Bath BA2 7AY, UK}
\email{cn510@bath.ac.uk}
\email{j.nordstrom@bath.ac.uk}
\begin{document}

\begin{abstract}
We introduce a rational homotopy invariant $\nmt$ of a topological space, which
is a quintic tensor on the cohomology. For $n \geq 2$, formality of a
closed \tkc{} manifold of dimension up to $5n-2$ is equivalent to vanishing
of $\nmt$ and the Bianchi-Massey tensor introduced by Crowley and the second
author. We show also that elements of the group $\Theta_8(\bigvee^r S^2)$ of closed
simply-connected spin 8-manifolds with the cohomology of an $r$-fold connected sum
of $S^2 \times S^6$ are determined up to torsion by the value of $\nmt$.
\end{abstract}

\ifthenelse{\boolean{\showdate}}{\date{\today}}{}
\maketitle

\ifthenelse{\boolean{\showdate}}{\vspace{-0.8\baselineskip}}{}
\vspace{-\baselineskip}

\section{Introduction}

We introduce and study a new rational homotopy invariant $\nmt$ of a
topological space $X$, which we call the \emph{pentagonal Massey tensor}.
It is a linear map of degree $-2$ to $H^*(X)$ from a
subspace of the fifth tensor power of $H^*(X)$, and captures information
similar to 4-fold Massey products. It is similar in style to the \bmp{}
introduced by Crowley and the second author \cite{bmp}
(see \S\ref{subsec:triples}). The definition of $\nmt$ will be given in
Sections \ref{s:8comp} and \ref{sec:general}, and we summarise our main results below.

Throughout the paper all homology and cohomology will be with rational coefficients unless otherwise indicated.

\subsection{8-manifolds}

Our main motivation for defining the pentagonal Massey tensor $\nmt$
is to study simply-connected 8-manifolds.
The simplest closed simply-connected manifolds where $\nmt$ can be non-trivial
are $8$-manifolds with the homology of a connected sum of some copies of
$S^2 \times S^6$.
For such a manifold $M$ the pentagonal Massey tensor is a well-defined linear
map $\rfunc(H^2(M)) \rightarrow H^8(M)$
(described in \S\ref{subsec:special_def}), where $\rfunc$ is defined as follows. 

\begin{defn}
\label{def:ndom}
For a vector space $V$ let $\pol^k V$ and $\Lambda^k V$ denote the degree-$k$ components of the polynomial and exterior algebra respectively, \ie the quotients of $V^{\otimes k}$ by the relation of symmetry or antisymmetry. We define $\rfunc(V)$ to be the kernel of 
\begin{equation}
\label{eq:rmap}
\begin{aligned}
\rmap: V \otimes \Lambda^2 \pol^2 V &\to \pol^2 V \otimes \pol^3 V, \\
q \otimes (xy \wedge zw) &\mapsto xy \otimes zwq - zw \otimes xyq
\end{aligned}
\end{equation}
\end{defn}

Note that $\rfunc$ is a functor. By Lemma \ref{lem:ndom}, if $\dim V = r$,
then $\dim \rfunc(V)= 6 \binom{r+2}{5}$.

\medskip
We will denote by $\Theta_8(\bigvee^r S^2)$ the set of diffeomorphism classes of spin (polarized) $8$-manifolds with the homology of $\#^r S^2 \times S^6$ (see
Definition \ref{def:theta}). It is a group under ``connected sum along the
$2$-skeleton", and it was computed by the first author \cite{nagy20} to be
\[ 
\textstyle
\Theta_8(\bigvee^r S^2) \cong \Z^a \oplus (\Z/2)^b \text{\,,\quad where } a = 6{r+2 \choose 5} \text{ and } b = 2{r+3 \choose 4} - {r-1 \choose 2} + 2 \text{\,.} 
\]

For a closed oriented 8-manifold $M$ the fundamental class determines an identification
${H^8(M) \cong \Q}$, so $\nmt$ amounts to a linear functional
\[
\nmtb(M) \in \rfunc(H^2(M))^{\vee} = \Hom(\rfunc(H^2(M)), \Q) \text{\,.}
\]
Further, a manifold $M$ in $\Theta_8(\bigvee^r S^2)$ by definition comes with an
identification $H_2(M;\Z) \cong \Z^r$ (so $H_2(M) \cong \Q^r$ and
$H^2(M) \cong (\Q^r)^{\vee}$), hence in this setting $\nmtb(M)$ can be regarded
as an element of $\rfunc((\Q^r)^{\vee})^{\vee} \cong \rfunc(\Q^r)$.
The first aim of this paper
is to compute $\nmt$ (or, equivalently, $\nmtb$) for elements of
$\Theta_8(\bigvee^r S^2)$. We find that $\nmt$ captures all the non-torsion information,
so in particular it determines elements of $\Theta_8(\bigvee^r S^2)$ up to finite
ambiguity. More precisely, we prove in \S\ref{subsec:nmt_theta} that

\begin{thm}
\label{thm:main2}
$\nmtb$ defines an isomorphism $\Theta_8(\bigvee^r S^2) \otimes \Q \cong \rfunc(\Q^r)$.
\end{thm}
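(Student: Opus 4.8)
The plan is to show that $\nmtb$ is a group homomorphism $\Theta_8(\bigvee^r S^2)\to\rfunc(\Q^r)$, to note that after $\otimes\,\Q$ source and target are $\Q$-vector spaces of the same dimension, and then to prove the resulting linear map is injective, which suffices as the dimensions match.

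\textbf{Homomorphism property.} Write $M = M_1 \# M_2$ for the connected sum along the $2$-skeleton of $M_1, M_2 \in \Theta_8(\bigvee^r S^2)$, formed by gluing $M_1$ and $M_2$ to a common regular neighbourhood $\nu \simeq \bigvee^r S^2$ of the $2$-skeleton; then $H^2(M) = H^2(\nu) = (\Q^r)^{\vee}$ compatibly with the polarizations, and $H^{\ge 3}(M)$ receives $H^{\ge 3}$ of $M_1$ and of $M_2$ with the two fundamental classes identified. The identity $\nmtb(M) = \nmtb(M_1) + \nmtb(M_2)$ in $\rfunc(\Q^r)$ then follows either from the naturality of $\nmt$ developed in Sections \ref{s:8comp}--\ref{sec:general}, applied to the maps relating $M$ to $M_1$, $M_2$ and $\nu$, or by a direct Mayer--Vietoris / relative-minimal-model computation over $\nu$: the two halves contribute the higher cohomology and the auxiliary choices of primitives additively, while the overlap $\nu$ contributes nothing to a degree-$8$ class. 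So $\nmtb$ is a homomorphism, and it factors through $\Theta_8(\bigvee^r S^2)\otimes\Q$ since the target is torsion-free.

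\textbf{Dimension count and injectivity.} By \cite{nagy20} the free rank of $\Theta_8(\bigvee^r S^2)$ is $a = 6\binom{r+2}{5}$, and by Lemma \ref{lem:ndom} also $\dim_\Q\rfunc(\Q^r) = 6\binom{r+2}{5} = a$; hence $\nmtb\otimes\Q$ is a linear map of $\Q$-vector spaces of equal finite dimension and it is enough to prove it injective. Let $M \in \Theta_8(\bigvee^r S^2)$ with $\nmtb(M) = 0$. As $H^4(M) = 0$ we have $p_1(M) = 0$, and as the intersection form of $M$ vanishes $\sigma(M) = 0$, so $p_2(M) = 0$ rationally by Hirzebruch's signature theorem; thus $M$ has trivial rational Pontryagin classes. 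The \bmp{} of $M$ also vanishes, being supported on a space built from $H^2$ and $H^4$ with $H^4(M) = 0$ (cf. \S\ref{subsec:triples}). By our criterion that a closed simply-connected $8$-manifold is formal exactly when $\nmt$ and the \bmp{} both vanish, $M$ is formal; since the rational cohomology ring of $M$ is that of $\#^r S^2\times S^6$, formality identifies the rational homotopy type of $M$ with the standard one.

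\textbf{From the standard rational type to torsion.} It remains to see that an $M$ with the standard rational homotopy type and trivial rational Pontryagin classes is torsion in $\Theta_8(\bigvee^r S^2)$. Up to finite ambiguity (finiteness of the genus of $\#^r S^2\times S^6$ and of its polarized homotopy mapping class group) such an $M$ is classified by the smooth structure set of $\#^r S^2\times S^6$, which is rationally trivial: in the surgery exact sequence, $[\,\#^r S^2\times S^6, G/O\,]\otimes\Q \cong \bigoplus_{k\ge 1}H^{4k}(\#^r S^2\times S^6;\Q) = H^8\otimes\Q = \Q$ maps isomorphically onto $L_8(\Z)\otimes\Q = \Q$ under the surgery obstruction, while $L_9(\Z)\otimes\Q = 0$. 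Hence $[M]$ differs from $[\,\#^r S^2\times S^6\,] = 0$ by torsion, so $\nmtb\otimes\Q$ is injective and therefore an isomorphism. I expect the real work to lie not in any single step but in two inputs: that the \bmp{} contributes nothing rationally in this homology range (so that $\nmt$ alone controls formality for $\#^r S^2\times S^6$-manifolds) and the surgery-theoretic identification of the torsion subgroup. If \cite{nagy20} supplies explicit generators of the free part of $\Theta_8(\bigvee^r S^2)$ — built by attaching handles along the $2$-skeleton so as to realize prescribed quadruple Massey products — then one may instead bypass the formality argument and prove surjectivity directly, by computing $\nmtb$ on those generators and checking the values span $\rfunc(\Q^r)$.
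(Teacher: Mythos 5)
Your strategy (additivity of $\nmtb$ under connected sum along the $2$-skeleton, a dimension count, and injectivity via the formality criterion) is genuinely different from the paper's. The paper identifies $\Theta_8(\bigvee^r S^2)\otimes\Q$ with $H_8(P_3)$ of the third Postnikov stage via the String bordism classification of \cite{nagy20} (Theorem \ref{thm:eta} and Proposition \ref{prop:p3-bord}), computes $\nmt_{P_3}$ explicitly on the Sullivan minimal model of $P_3$ and shows it is an isomorphism onto $H^8(P_3)$ (Theorem \ref{thm:n-p3}), and then transports this to $M$ by naturality (Proposition \ref{prop:nat2}); this yields injectivity and surjectivity simultaneously and never needs additivity of $\nmtb$, which you assert but only sketch. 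Parts of your route are sound: the dimension count matches, the \bmp{} is indeed vacuous for $8$-dimensional E-manifolds (Remark \ref{rmk:bmt_zero}), $\nmtb(M)=0$ is equivalent to $\nmt_M=0$ by Poincar\'e duality, and Theorem \ref{thm:main1} with $n=2$ then gives that $M$ is formal, hence rationally homotopy equivalent to $\#^r S^2\times S^6$. Your closing suggestion of evaluating $\nmtb$ on explicit generators is in fact much closer in spirit to the paper's actual argument, with $P_3$ serving as a universal receptacle for all generators at once.

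The genuine gap is the last step of your injectivity argument. The surgery exact sequence you invoke computes the structure set of $\#^r S^2\times S^6$, i.e.\ it classifies manifolds \emph{homotopy equivalent} to it; your $M$ is only known to be \emph{rationally} homotopy equivalent, and ``finiteness of the genus'' does not bridge this, because a rational homotopy equivalence does not control the $p$-local homotopy types. What you actually need is Sullivan's arithmetic finiteness theorem (there are only finitely many diffeomorphism classes of closed simply-connected manifolds of dimension $\geq 5$ with prescribed rational homotopy type and rational Pontryagin classes). Even granting that, you must handle the polarization: an element of $\Theta_8(\bigvee^r S^2)$ is a pair $(M,\varphi)$, and finiteness of the set of underlying manifolds occurring in $\ker\nmtb$ does not imply that each $[M,\varphi]$ has finite order --- the classes $n[M,\varphi]$ could have diffeomorphic underlying manifolds yet remain distinct if the image of $\pi_0\Diff(M)$ in $GL_r(\Z)$ has infinite index. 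Closing this requires Sullivan's arithmeticity theorem for mapping class groups (or an independent argument that $\ker\nmtb$ consists of elements of finite order). As written, injectivity is not established.
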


\subsection{Relation to fourfold Massey products}

Much of the theory still works nicely even if we go from manifolds in $\Theta_8(\bigvee^r S^2)$ to
allowing $H_4$ to be non-trivial.

\begin{defn} \label{def:eman}
A closed manifold $M$ is called an \textit{E-manifold} if it is simply-connected and its homology is concentrated in even dimensions, that is, $H_{2k+1}(M;\Z) \cong 0$ for every $k$.
\end{defn}

\begin{ex}
Examples of E-manifolds include homotopy spheres (in even dimensions), complex and quaternionic projective spaces, Bott-manifolds and complex projective complete intersections (in even complex dimensions). 
\end{ex}

In particular, $\nmt$ is still uniquely defined for an $8$-dimensional
E-manifold, although the domain $\rfunc(H^2)$ is replaced by
$\ndom := (H^2 \otimes \Lambda^2 \kc) \cap \rfunc(H^2)$, where $\kc$ is the kernel of the cup product map $\pol^2 H^2 \to H^4$.
Moreover, the ambiguities in the definition of fourfold Massey products also
remain more manageable than in general.

\begin{defn} \label{def:penta}
For a vector space $V$ we define the multilinear map $\cmap : V^{\times 5} \rightarrow V \otimes \Lambda^2 \pol^2 V$ by 
\[
\cmap(x_1, x_2, x_3, x_4, x_5) = \sum_{\text{cyc}} x_1 \otimes (x_2x_3 \wedge x_4x_5) \text{\,.}
\]
Note that $\cmap(x_1, x_2, x_3, x_4, x_5) \in \rfunc(V)$. 
\end{defn}

Note also that if $M$ is an $8$-dimensional E-manifold and the elements $x_1, x_2, x_3, x_4, x_5 \in H^2(M)$ satisfy $x_1x_2 = x_2x_3 = x_3x_4 = x_4x_5 = x_5x_1 = 0 \in H^4(M)$, then $\cmap(x_1, x_2, x_3, x_4, x_5) \in \ndom$. In \S\ref{subsec:fourfold} we prove the following: 

\begin{prop}
For any $8$-dimensional E-manifold $M$ the fourfold Massey products are
completely determined by $\nmt$ via the formula 
\[ 
\gen{x_1,x_2,x_3,x_4}x_5 = \nmt\left(\cmap(x_1, x_2, x_3, x_4, x_5)\right) \in H^8(M) \text{\,.}
\]
If $H_4(M) = 0$ then also $\nmt$ is determined by the fourfold Massey products.
\end{prop}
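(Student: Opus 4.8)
The plan is to compute cochain-level representatives in a commutative DGA model $(A,d)$ of $M$ (\eg $A=\drpl(M)$) and to match them. Fix cocycles $a_1,\dots,a_5\in A^2$ representing $x_1,\dots,x_5\in H^2(M)$, with indices read modulo $5$; by hypothesis $\cmap(x_1,\dots,x_5)\in\ndom$, so each $x_ix_{i+1}=0\in H^4(M)$ and we may pick $b_{i,i+1}\in A^3$ with $db_{i,i+1}=a_ia_{i+1}$. The decisive point is that the E-manifold hypothesis $H^{\mathrm{odd}}(M)=0$ makes each $b_{i,i+1}$ unique up to a coboundary: hence the triple products $\gen{x_i,x_{i+1},x_{i+2}}\in H^4(M)$ are single-valued elements, a defining system for $\gen{x_1,x_2,x_3,x_4}$ exists precisely when $\gen{x_1,x_2,x_3}=\gen{x_2,x_3,x_4}=0$ (automatic if $H^4(M)=0$), and any such system consists of the $b_{i,i+1}$ together with $4$-cochains $b_{123},b_{234}$ with $db_{123}=a_1b_{23}-b_{12}a_3$ and $db_{234}=a_2b_{34}-b_{23}a_4$. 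Tracing the standard defining-system formula, $\gen{x_1,x_2,x_3,x_4}$ is represented by the $6$-cocycle $a_1b_{234}-b_{12}b_{34}+b_{123}a_4$, whose indeterminacy (from altering $b_{123},b_{234}$ by cocycles) is $x_1H^4(M)+H^4(M)x_4$. Multiplying by $a_5$ and using $x_5x_1=x_4x_5=0$ annihilates this, so $\gen{x_1,x_2,x_3,x_4}x_5\in H^8(M)$ is a single well-defined class, represented by $\bigl(a_1b_{234}-b_{12}b_{34}+b_{123}a_4\bigr)a_5$.

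The core of the proof is the chain-level identity
\[
\nmt\bigl(\cmap(x_1,\dots,x_5)\bigr)=\bigl[\,\bigl(a_1b_{234}-b_{12}b_{34}+b_{123}a_4\bigr)a_5\,\bigr]\in H^8(M)\,.
\]
To prove it I would unwind the construction of $\nmt$ on $\ndom$ from \S\ref{subsec:special_def} applied to $\xi=\cmap(x_1,\dots,x_5)=\sum_{\text{cyc}}x_1\otimes(x_2x_3\wedge x_4x_5)$: using the bounding cochains $b_{i,i+1}$ above, the definition exhibits $\nmt(\cmap(x_1,\dots,x_5))$ as the class of an explicit $8$-cocycle built from the $a_i$, the $b_{i,i+1}$, and correction terms governed by the Bianchi relation $\rmap(\cmap(x_1,\dots,x_5))=0$. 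I would then transform this cocycle — using the relations $db_{i,i+1}=a_ia_{i+1}$ together with $db_{123}=a_1b_{23}-b_{12}a_3$ and $db_{234}=a_2b_{34}-b_{23}a_4$, and discarding coboundaries — until it reads $\bigl(a_1b_{234}-b_{12}b_{34}+b_{123}a_4\bigr)a_5$; the $5$-fold cyclic sum in $\cmap$ is exactly what is needed for the signs to cancel and for the would-be surplus terms to pair off. As $\cmap$ is cyclically invariant, an automatic consequence is that the (defined) rooted products $\gen{x_i,x_{i+1},x_{i+2},x_{i+3}}x_{i+4}$ all coincide.

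For the converse, assume $H_4(M)=0$, so $H^4(M)=0$. Then the cup product $\pol^2H^2(M)\to H^4(M)$ vanishes, whence $\kc=\pol^2H^2(M)$ and $\ndom=\rfunc(H^2(M))$: thus $\nmt$ is defined on all of $\rfunc(H^2(M))$. It then suffices to know that $\rfunc(V)$ is spanned by the image of $\cmap\colon V^{\times 5}\to\rfunc(V)$ — a purely linear-algebraic property of the functor $\rfunc$, readable off from the bookkeeping behind Lemma~\ref{lem:ndom}. Granting it, the linear functional $\nmt$ is pinned down by the numbers $\nmt(\cmap(x_1,\dots,x_5))=\gen{x_1,x_2,x_3,x_4}x_5$, \ie by the fourfold Massey products of $M$.

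I expect the chain computation in the second paragraph to be the main obstacle: keeping the sign conventions of the $\nmt$-construction and of the Massey product consistent, and checking that every discrepancy between the two $8$-cocycles is a genuine coboundary — this is the step where the Bianchi identity and the cyclic symmetry of $\cmap$ must be used substantively rather than cosmetically. Subsidiary points are the indeterminacy bookkeeping of the first paragraph (needed so that the left-hand side is defined at all and the formula does not depend on the defining system) and, for the converse, the spanning statement for $\rfunc$.
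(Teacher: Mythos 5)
Your proposal follows essentially the same route as the paper: both sides are observed to be choice-independent (using $H^{\mathrm{odd}}=0$), compatible cochain choices are made, and the Massey representative multiplied by $a_5$ is converted into the cyclic sum $\sum_{\text{cyc}} a_1 b_{23} b_{45}$, which is exactly $\alpha\gamma^2$ evaluated on $\cmap(x_1,\dots,x_5)$; the converse likewise rests on the spanning statement for the image of $\cmap$ in $\rfunc(V)$, which the paper derives from the irreducibility in Lemma~\ref{lem:ndom} via Schur's lemma. The chain computation you defer is in fact a short integration by parts — substitute $a_5a_1 = db_{51}$ and $a_4a_5=db_{45}$, move $d$ onto $b_{234}$ and $b_{123}$, and expand using their defining relations — after which $b_{123}$ and $b_{234}$ drop out entirely and only the cyclic sum survives, with no correction terms needed.
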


We can thus also say that elements of $\Theta_8(\bigvee^r S^2)$ are determined up to
$2^{2{r+3 \choose 4} - {r-1 \choose 2} + 2}$ possibilities by their fourfold
Massey products.

\subsection{The role of \texorpdfstring{$\nmt$}{P} in rational homotopy classification}

Now we move on to considering $\nmt$ for more general closed manifolds. The general version of $\nmt$ is defined in \S\ref{ss:gen-def}. In \cite{bmp} it was shown that for $n \geq 2$, the rational homotopy type of a
closed \tkc{} manifold of dimension up to $5n-3$ is determined by its
cohomology algebra and \bmp{} $\princ$. In particular, such a manifold is
formal if and only if $\princ = 0$. For an \tkc{} manifold of
dimension $\geq 5n-2$, fourfold Massey products and the pentagonal Massey
tensor can be non-trivial. In the borderline case, we show
in \S\ref{subsec:formality} that $\nmt$ can be used to characterise formality. 

\begin{thm}
\label{thm:main1}
For $n\geq 2$, a closed \tkc{} manifold of dimension $5n-2$ is formal if and only if $\princ = 0$ and $\nmt = 0$.
\end{thm}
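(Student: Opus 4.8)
The plan is to prove the two implications separately; the ``only if'' direction is essentially soft, while ``if'' comes down to extending the model-theoretic obstruction analysis of \cite{bmp} by one dimension and identifying the new obstruction with $\nmt$.

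\emph{Only if.} If $M$ is formal then $\princ = 0$ because $\princ$ vanishes on any space admitting a formal model (\cite{bmp}). For $\nmt$, recall that it is an invariant of the rational homotopy type (\S\ref{ss:gen-def}), so it may be computed from the minimal model of $(H^*(M),0)$; in that model every cocycle whose class is decomposable has a decomposable primitive, so all the auxiliary primitives entering the definition of $\nmt$ can be taken to be zero, whence $\nmt = 0$. (Equivalently: a formal space has vanishing fourfold Massey products with trivial indeterminacy, and by the general analogue of the Proposition of \S\ref{subsec:fourfold} these determine $\nmt$ in this dimension range.)

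\emph{If.} Suppose $\princ = 0$ and $\nmt = 0$, and set $m := 5n-2$. Since $M$ is closed and \tkc{} of dimension $m$, Poincaré duality forces $H^i(M) = 0$ except for $i = 0$, for $n \le i \le 4n-2$, and for $i = m$, with $H^m(M) \cong \Q$. I would take the minimal Sullivan model $(\Lambda V, d)$ of $M$ together with a morphism $\rho\colon (\Lambda V, d) \to (H^*(M),0)$ inducing an isomorphism on cohomology. Its generators below degree $2n-1$ are closed and dual to $H^*(M)$; the non-closed generators in degree $2n-1$ can be taken to be elements $g_z$ with $dg_z = z$ for $z$ in the kernel $\kc$ of the cup product $\pol^2 H^n(M)\to H^{2n}(M)$, and $\rho(g_z)=0$. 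By \cite{bmp} the hypothesis $\princ = 0$ lets us extend $\rho$ over all generators of degree $< m$ keeping it a CDGA map into $(H^*(M),0)$ that is an isomorphism on cohomology in those degrees (a closed \tkc{} $(5n-3)$-manifold with $\princ = 0$ is formal, and the same construction applies here to the generators below degree $m$). Since $m < 5n$, counting generator degrees together with Poincaré duality shows the only remaining obstruction to formality lies in degree $m$.

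That obstruction is measured by cocycles of degree $m$ built, schematically, from products of a degree-$n$ generator with two of the $g_z$'s, corrected by terms in the generators of intermediate degree; the condition that such a combination can be completed to a genuine cocycle is precisely the vanishing of the map $\rmap$ of Definition \ref{def:ndom}, so the obstruction is a linear functional on $\rfunc(H^n(M))$ (intersected with $H^n(M)\otimes\Lambda^2\kc$, as in the definition of $\ndom$). Matching the primitives chosen in the construction with those in the definition of $\nmt$ in \S\ref{ss:gen-def} — using $\princ = 0$ to trivialise the lower-order corrections — identifies this functional, via $H^m(M)\cong\Q$, with $\nmtb(M)$; evaluated on $\cmap(x_1,\dots,x_5)$ it returns the fourfold Massey product $\gen{x_1,x_2,x_3,x_4}x_5$. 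Hence $\nmt = 0$ annihilates the obstruction, $\rho$ extends to a quasi-isomorphism, and $M$ is formal. The crux is this last identification: one must check that the a priori indeterminacy of the degree-$m$ obstruction — a quotient of $H^m(M)\cong\Q$ by the image of certain decomposables — is trivial, so $\nmtb(M)$ is a genuine functional on $\rfunc(H^n(M))$ rather than a coset; one must match the sign and combinatorial conventions with Definitions \ref{def:ndom} and \ref{def:penta} and the construction of \S\ref{ss:gen-def}; and one must control the interplay with $\princ$, since the clean description of $\nmt$ is available only after $\princ = 0$. The verification that no obstruction survives beyond degree $m$ is immediate from $m < 5n$ and Poincaré duality.
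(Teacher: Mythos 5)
Your ``only if'' direction is fine and essentially the paper's (which just observes that vanishing of $\princ$ and $\nmt$ is a quasi-isomorphism-invariant condition), though the parenthetical claiming fourfold Massey products determine $\nmt$ ``in this dimension range'' overstates the Proposition of \S\ref{subsec:fourfold}: that converse needs $\ndom$ to be generated by ordinary elements, which requires more than a dimension bound. The ``if'' direction, however, has two genuine gaps.

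First, you have no mechanism for truncating the formality check. You propose to extend a model map $\rho:\Lambda V\to(H^*(M),0)$ over all generators of degree $<m$ ``by the same construction'' as \cite{bmp} and then assert that ``counting generator degrees together with Poincar\'e duality'' leaves only a degree-$m$ obstruction. But obstructions to extending $\rho$ live in $H^{k+1}$ for every generator degree $k\le m-1$, and the argument of \cite{bmp} was carried out for $(5n-3)$-manifolds, whose Poincar\'e duality pairs different degrees; it does not transplant for free to degrees $<5n-2$ of a $(5n-2)$-manifold. The paper sidesteps all of this by invoking the Fern\'andez--Mu\~noz theorem that formality is equivalent to $s$-formality for closed $m$-manifolds with $m\le 2s+1$: taking $s=3n-2$, one only ever constructs the model and the map $\psi$ up to degree $3n-2$, and a Poincar\'e duality argument (Remark \ref{rmk:reduce}) reduces the exactness check to closed elements of the ideal of degree $\le 3n-2$ or exactly $m$. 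Some such truncation principle is indispensable and is missing from your argument.

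Second, the identification of the degree-$m$ obstruction with $\nmt$ --- which you correctly flag as ``the crux'' --- is asserted rather than carried out, and it involves more than matching signs and conventions. The closed degree-$m$ part of the relevant ideal decomposes into several pieces: summands isomorphic to $\tripsp$ and to the image of $H^*\otimes\tripsp$, on which the induced map to $H^m$ is the uniform triple product $\trip_c$ (these die only because $\princ=0$ permits a cochain choice $c$ with $\trip_c=0$, so this is not a ``lower-order correction'' one can wave away); a summand isomorphic to $\ndom$ on which the map is $\nmt_c$; and a further contribution coming from the summand $\calk\cong\kersym{H^n\otimes\kc^{2n}}$ of the degree-$(3n-2)$ generators, which your sketch never mentions and which the paper kills by exploiting the residual freedom in $\psi|_{\calk}$ together with Poincar\'e duality. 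Without these steps the proposal is a plausible outline of the right strategy, not a proof.
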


While the \bmp{} is always uniquely defined, $\nmt$ has a slightly complicated
dependence on cochain choices in general (described
in Theorem \ref{thm:choices}). However, if $\princ = 0$ then $\nmt$
is uniquely defined too (Remark \ref{rmk:bmt_zero}), so the hypothesis in
Theorem \ref{thm:main1} is unambiguous.

\begin{rmk}
Combining Theorems \ref{thm:main2} and \ref{thm:main1} gives that the subset of
formal elements of $\Theta_8(\bigvee^r S^2)$ is precisely the torsion subgroup.
\end{rmk}

The term ``formality'' derives from the rational homotopy type of a formal
space being a ``formal consequence'' of the cohomology algebra.
In particular, Theorem \ref{thm:main1} means that if $\princ$ and $\nmt$
of a \tkc{} $(5n-2)$-manifold vanish then its rational homotopy type is
determined by the cohomology algebra.
We expect that if we weaken the hypotheses of Theorem~\ref{thm:main1} to allow
non-zero~$\nmt$, then the rational homotopy type is determined by the
cohomology algebra together with $\nmt$. 

\begin{conj}
\label{conj:special}
Let $F : H^*(X) \to H^*(Y)$ be an isomorphism of the cohomology algebras
of closed \tkc{} manifolds of dimension $\leq 5n-2$ with trivial \bmp. 
Then $F$ is realised by a rational homotopy equivalence if and only if
$F$ intertwines the pentagonal Massey tensors.
\end{conj}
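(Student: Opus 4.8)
The plan is to push the obstruction-theoretic machinery of \cite{bmp} one stage further. The ``only if'' direction is essentially formal: when $\princ=0$ the tensor $\nmt$ is a genuine rational homotopy invariant (this is the point of Remark \ref{rmk:bmt_zero}), so any rational homotopy equivalence inducing $F$ on cohomology automatically intertwines the two $\nmt$'s. Moreover, for dimension $\le 5n-3$ a closed \tkc{} manifold with $\princ=0$ is formal by \cite{bmp}, and $\nmt$ vanishes identically in that range, so the statement is immediate there; the entire content is the borderline case of dimension exactly $5n-2$, and the work is in the ``if'' direction.

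First I would reduce to a statement about Sullivan minimal models. For an \tkc{} closed manifold of dimension $d\le 5n-2$ the cohomology is concentrated in degrees $n,\dots,d$, a band of width $\le 4n-2$; by Sullivan's theory, realising $F$ by a rational homotopy equivalence is equivalent to lifting $F$ to an isomorphism of minimal models, and for a rational Poincaré duality space of this dimension the minimal model is in turn controlled by a bounded truncation together with the fundamental class. Following \cite{bmp} I would work with the Halperin--Stasheff filtered (bigraded) model: start from the formal model of the algebra $H^*(M)$ and record the successive perturbations of the differential that strictly lower the word-length filtration. For $d\le 5n-3$ the degree bookkeeping allows only the single perturbation encoded by $\princ$; at $d=5n-2$ it allows one further perturbation, and the claim to be proved is that this one is governed precisely by $\nmt$.

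The heart of the argument is thus a normal-form result: for a closed \tkc{} $(5n-2)$-manifold $M$ with $\princ=0$, one can choose the filtered model so that its differential is formal apart from a single correction term of word-length jump two, concentrated in the top band of degrees, whose coefficients are exactly the components of $\nmt$ as defined in \S\ref{sec:general} (this is the model-theoretic shadow of Theorem \ref{thm:main1}: the correction vanishes iff $M$ is formal iff $\nmt=0$, so consistency with \ref{thm:main1} is built in). Granting such a normal form for both $X$ and $Y$, the cohomology isomorphism $F$ lifts canonically over the bigraded part, and it lifts over the single correction term precisely because it intertwines $\nmt$; an inductive lifting argument then promotes this to an isomorphism of the full minimal models, which by Sullivan's realisation theorem for nilpotent finite-type spaces comes from a rational homotopy equivalence inducing $F$.

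The main obstacle is establishing that normal form, i.e.\ showing that \emph{no} obstruction beyond $\nmt$ survives in this dimension range. Concretely one must (a) carry out the degree/word-length count and verify that, once $\princ=0$ and $d\le 5n-2$, only one free perturbation remains, using Poincaré duality to kill the a priori contributions coming from high-degree generators of the model; (b) identify the abstract perturbation cocycle with the explicit quintic tensor $\nmt$, including pinning down its now-unambiguous cochain-level representative; and (c) control the gauge ambiguity, so that two normal-form models with the same cohomology algebra and the same $\nmt$ are genuinely isomorphic rather than merely connected through a larger model. I expect (a) and (c) to be the delicate points, essentially because $5n-2$ is exactly the dimension at which the obstruction theory ceases to be ``one layer deep''.
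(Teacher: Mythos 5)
This statement is Conjecture \ref{conj:special} in the paper: the authors do not prove it, and they explicitly present it as open (it is recovered as a special case of Conjecture \ref{conj:type}, with the only proved partial result being the formality criterion of Theorem \ref{thm:main1}, i.e.\ the case where $F$ is the identity on a cohomology algebra with $\princ=0$ and $\nmt=0$). Your proposal is a strategy outline rather than a proof, and the gap is exactly where you locate it yourself: the ``normal-form result'' for the Halperin--Stasheff filtered model, together with points (a)--(c), \emph{is} the content of the conjecture, and none of it is carried out. In particular (a) the claim that once $\princ=0$ only a single further perturbation survives at dimension $5n-2$ requires the word-length/degree count that the paper performs only in the formality setting (the decomposition into the pieces $Z^m(1,1)$, $Z^m(2,1)$, $Z^m(0,2)$, $Z^m(1,2)$ in the proof of Theorem \ref{thm:formality}), and for the classification statement one needs this not just for the top degree $m$ but for the whole lifting problem; and (c) the gauge ambiguity is precisely the subtle point the authors isolate via the space $\Delta$ and Lemma \ref{lem:same_trip} in \S5.3 --- showing that two models with the same cohomology algebra and the same $\nmt$ are isomorphic, rather than merely that their difference lies in $\Delta$, is not addressed. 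Asserting that the lift over the correction term exists ``precisely because $F$ intertwines $\nmt$'' presupposes the identification (b) and the uniqueness (c) that remain to be proved.

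Two smaller points. Your ``only if'' direction relies on naturality of the general (all-degrees) pentagonal Massey tensor under quasi-isomorphisms; the paper proves naturality only for the degree-$8$ component (Proposition \ref{prop:nat}), so even this direction needs an argument (plausible, but not free) extending Remark \ref{rmk:obstruction} to the general setting of \S\ref{sec:general}. And be careful with the claim that dimension $5n-2$ is ``exactly where the obstruction theory ceases to be one layer deep'': Remark \ref{rmk:gen_bmp} indicates that already at $5n-2 < m \leq 6n-4$ additional invariants $\princ_{3,2}$ beyond $\princ$ and $\nmt$ may be needed, so the counting in (a) must be done at $m = 5n-2$ specifically and cannot be waved through by a generic ``bounded truncation'' argument.
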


If the \bmp{} is allowed to be non-zero then it is a little more intricate
to formulate the correct statement because of the ambiguities in $\nmt$,
see Conjecture \ref{conj:type}.
Since a closed \tkc{} manifold of dimension $\leq 6n-4$ cannot have non-trivial
Massey products of order greater than 4 we expect that the situation is largely
the same in that dimension range, but it may be necessary to add a slight
generalisation of the \bmp{} as discussed in Remark~\ref{rmk:gen_bmp}.

\subsection*{Acknowledgements}

The authors thank Manuel Amann, Diarmuid Crowley and Alastair King for useful
discussions, and the Simons foundation for its support of their
research under the Simons Collaboration on ``Special Holonomy in Geometry,
Analysis and Physics'' (grant \#488631, Johannes Nordstr\"om).

\section{The degree 8 component of \texorpdfstring{$\nmt$}{P}} \label{s:8comp}

We describe the pentagonal Massey tensor $\nmt$ in general
in \S\ref{sec:general}. Let us for now describe the component relevant for
closed simply-connected $8$-manifolds. In this section we will work solely in
the algebraic setting.

\subsection{The representation \texorpdfstring{$\rfunc(V)$}{R(V)}}

Let us first make some brief remarks about the domain of $\nmt$.
Given a vector space $V$, we defined $\rfunc(V) = \ker \rmap$ in Definition
\ref{def:ndom}, which can be thought of as a representation of $GL(V)$.
Recall that any irreducible representation of $GL(V)$ is a Weyl module
$S(\lambda)$ indexed
by a partition $\lambda$ of an integer $n$ (for which the representation embeds
in $V^{\otimes n}$).

\begin{lem}
\label{lem:ndom}
\begin{enumerate}
\item If $V$ is a rational vector space of dimension $r$, then
$\dim \rfunc(V) = 6{r+2 \choose 5}$.
\item $\rfunc(V)$ is the irreducible representation $S(3,1,1)$ of $GL(V)$
(\ie the Weyl module corresponding to the partition $5 = 3+1+1$).
\end{enumerate}
\end{lem}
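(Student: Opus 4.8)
The plan is to identify $\rfunc(V) = \ker \rmap$ as a $GL(V)$-subrepresentation of $V \otimes \Lambda^2 \pol^2 V$ and pin it down by character/dimension bookkeeping, from which part (i) follows by evaluating the dimension of $S(3,1,1)$ via the hook-content formula. First I would decompose the source and target of $\rmap$ into irreducibles. The factor $\pol^2 V = S(2)$, so $\Lambda^2 \pol^2 V = \Lambda^2 S(2)$ decomposes (by the plethysm $\Lambda^2(S^2)$) as $S(3,1)$; hence $V \otimes \Lambda^2 \pol^2 V = S(1) \otimes S(3,1)$, which by Pieri's rule is $S(4,1) \oplus S(3,2) \oplus S(3,1,1)$. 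On the target side, $\pol^2 V \otimes \pol^3 V = S(2) \otimes S(3) = S(5) \oplus S(4,1) \oplus S(3,2)$, again by Pieri. Since $\rmap$ is $GL(V)$-equivariant and its image lands in the subspace of $\pol^2 V \otimes \pol^3 V$ spanned by elements $xy \otimes zwq - zw \otimes xyq$ (which is visibly not all of it — it is contained in the kernel of the total symmetrization $\pol^2 V \otimes \pol^3 V \to \pol^5 V$, i.e. it misses the $S(5)$ summand), the only candidate summands for $\im \rmap$ are among $S(4,1) \oplus S(3,2)$, and the only candidate summands for $\ker \rmap$ are accordingly $S(3,1,1)$ together with whichever of $S(4,1), S(3,2)$ are killed.

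The key step is therefore to show that $\rmap$ is injective on $S(4,1) \oplus S(3,2)$, equivalently that $\im \rmap = S(4,1) \oplus S(3,2)$ and $\ker \rmap = S(3,1,1)$. By Schur's lemma it suffices to exhibit, for each of the two multiplicity-one types $S(4,1)$ and $S(3,2)$, a single highest-weight (or otherwise explicit) vector in $V \otimes \Lambda^2 \pol^2 V$ of that type on which $\rmap$ does not vanish; equivariance then forces the whole isotypic component to inject. Concretely, with a basis $e_1, e_2, \dots$ of $V$ I would test $\rmap$ on elements like $e_1 \otimes (e_1 e_2 \wedge e_1 e_1)$ and $e_1 \otimes (e_1 e_1 \wedge e_2 e_3) + (\text{symmetrization terms})$ to realize the two types and check the images are nonzero in $\pol^2 V \otimes \pol^3 V$ — a short direct computation. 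Assuming both are nonzero, $\ker \rmap$ is exactly the $S(3,1,1)$ summand, proving (ii); and since $S(3,1,1)$ appears with multiplicity one, part (i) reduces to computing $\dim S(3,1,1)$ for $\dim V = r$, which the hook-content formula gives as $\tfrac{r(r-1)(r+1)(r+2)(r-2)}{\text{(product of hook lengths)}}$; the hook lengths of the Young diagram of $(3,1,1)$ are $5,2,1$ in the first row and $2,1$ down the first column, i.e. product $5\cdot 2 \cdot 1 \cdot 2 \cdot 1 = 20$, and the contents give $(r)(r+1)(r+2)(r-1)(r-2)$, so $\dim S(3,1,1) = \tfrac{(r-2)(r-1)r(r+1)(r+2)}{20} = 6\binom{r+2}{5}$, as claimed.

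The main obstacle I anticipate is getting the plethysm $\Lambda^2(S^2 V)$ and the two Pieri decompositions exactly right — sign conventions and the precise list of summands — and then making sure the explicit test vectors genuinely lie in the claimed isotypic pieces (a highest weight vector is cleanest, but writing one down for $S(3,2)$ inside $V \otimes \Lambda^2 \pol^2 V$ takes a little care because of the antisymmetrization in the $\Lambda^2$). If one prefers to avoid plethysm entirely, an alternative is purely dimensional: compute $\dim \ker \rmap$ directly for small $r$ (say $r = 2, 3, 4$) to match $6\binom{r+2}{5}$ and to see the cokernel is $S(5) \oplus S(4,1) \oplus S(3,2)$, then invoke polynomiality of the functor to conclude in general; but the representation-theoretic route is cleaner and simultaneously yields (ii). Either way the computation is elementary once the decomposition is in hand.
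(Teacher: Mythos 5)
Your decompositions ($\Lambda^2 \pol^2 V = S(3,1)$, hence $V \otimes \Lambda^2 \pol^2 V = S(4,1) \oplus S(3,2) \oplus S(3,1,1)$, and $\pol^3 V \otimes \pol^2 V = S(5) \oplus S(4,1) \oplus S(3,2)$) agree with the paper's, and your hook-content evaluation of $\dim S(3,1,1) = 6\binom{r+2}{5}$ is correct (the paper mentions exactly this as an alternative for (i)). Where you diverge is in the key step. The paper proves directly that $\im \rmap$ is \emph{precisely} the kernel of the symmetrisation $\pol^3 V \otimes \pol^2 V \to \pol^5 V$, by an elementary monomial-swapping argument in a basis; this yields the four-term exact sequence $0 \to \rfunc(V) \to V \otimes \Lambda^2\pol^2 V \to \pol^3 V \otimes \pol^2 V \to \pol^5 V \to 0$, from which (i) is an alternating sum of binomial coefficients and (ii) follows because $\im\rmap$ must then be $S(4,1)\oplus S(3,2)$. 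You instead propose to show injectivity of $\rmap$ on the $S(4,1)$ and $S(3,2)$ isotypic pieces by testing nonvanishing on one vector of each type. That route is valid by Schur's lemma and multiplicity one, but it carries exactly the burden you acknowledge: the test vectors must genuinely generate the claimed types. Your first vector $e_1 \otimes (e_1e_2 \wedge e_1e_1)$ is fine (it is a highest weight vector of weight $(4,1)$ with visibly nonzero image), but your second, $e_1 \otimes (e_1e_1 \wedge e_2e_3) + \cdots$, has weight $(3,1,1)$, which occurs in all three summands of the source, so nonvanishing of its image would not isolate $S(3,2)$. You need a weight-$(3,2)$ highest weight vector instead, e.g.\ $e_1 \otimes (e_1e_1 \wedge e_2e_2) - 2\, e_2 \otimes (e_1e_1 \wedge e_1e_2)$, whose image under $\rmap$ is readily checked to be nonzero. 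With that correction your argument closes; the paper's combinatorial identification of the image is arguably cleaner in that it avoids any explicit highest-weight computation and gives the dimension count for free.
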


\begin{proof}
Both claims follow from the observation that
the image of $\rmap$ is precisely the kernel of the symmetrisation map
$\pol^3 V \otimes \pol^2 V \to \pol^5 V$, which is easy to establish by
picking a basis for $V$: modulo the image of $\rmap$, a monomial $(qxy)(zw)$
in basis elements is equal to $(qzw)(xy)$, and it is clear that two monomials
are related by a sequence of such swaps if and only if they involve the same 5
basis elements, \ie the images in $P^5 V$ are the same monomial.
\begin{enumerate}
\item The exactness of
$0 \to \rfunc(V) \to V \otimes \Lambda^2 \pol^2 V \to \pol^3 V \otimes \pol^2 V
\to \pol^5 V \to 0$ allows us to evaluate
\[ \dim \rfunc(V) = r {{r+1 \choose 2} \choose 2} - {r+2 \choose 3}{r+1 \choose 2}+ {r+4 \choose 5} . \]
\item The other terms in the exact sequence are easy to decompose into irreducibles, see Fulton and Harris \cite[\S6]{fulton:rep}.
$\pol^5 V = S(5)$, and by Pieri's formula
$\pol^3 V \otimes \pol^2 V = S(5) \oplus S(4,1) \oplus S(3,2)$, so
$\im \rmap$ must be $S(4,1) \oplus S(3,2)$.
On the other hand Pieri's formula also gives
$\Lambda^2 \pol^2 V = S(3,1)$ (see \cite[Exercise 6.16]{fulton:rep}) and hence
$V \otimes \Lambda^2 \pol^2 V = S(4,1) \oplus S(3,2) \oplus S(3,1,1)$, so
$\rfunc(V)$ is $S(3,1,1)$.
\end{enumerate}
Having established (ii), one could of course alternatively deduce (i) by
applying a standard formula \cite[Theorem 6.3]{fulton:rep} for the dimension
of a Weyl module. 
\end{proof}

\subsection{Definition of the degree 8 component of \texorpdfstring{$\nmt$}{P}}
\label{subsec:special_def}

Let $\dga$ be a DGA over $\Q$ (\eg $\Omega^*_{PL}(X)$ for a topological space $X$),
with cohomology $H^*$. Let $\kc^4 = \kc_{\dga} \subseteq \pol^2 H^2$ be the kernel
of the product map $\pol^2 H^2 \to H^4$, and let
$\ndom = \ndom_{\dga} \subseteq H^2 \otimes \Lambda^2 \kc$ be the kernel of the restriction of
\eqref{eq:rmap};
equivalently, $\ndom_{\dga} = (H^2 \otimes \Lambda^2 \kc) \cap \rfunc(H^2)$.
\begin{rmk}
In terms of Definition \ref{def:ndom_gen} in the more general setting,
$\ndom_\dga$ coincides with the degree $10$ component of the graded vector
space $\ndom^*(H^*(\dga))$ associated to the graded algebra $H^*(\dga)$.
\end{rmk}

Let $\clalg \subseteq \dga$ be the subalgebra of closed elements. Pick a right
inverse $\alpha : H^2 \to \clalg^2$ for the projection to cohomology.
Then the restriction of $\alpha^2 : \pol^2 H^2 \to \clalg^4$ to $\kc$ takes
exact values, so one may pick $\gamma : \kc \to \dga^3$ such that $\alpha^2_{|\kc} = d\gamma$.

\begin{prop} \label{prop:ag2-closed}
The restriction of
\begin{equation}
\label{eq:chains}
\alpha \gamma^2 : H^2 \otimes \Lambda^2 \kc \to \dga^8
\end{equation}
to $\ndom$ takes closed values.
\end{prop}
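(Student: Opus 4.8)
The plan is to compute $d\bigl(\alpha\gamma^2(T)\bigr)$ directly for $T \in \ndom$ and to recognise the answer as the image of $\rmap(T)$ under a well-defined linear map; since $\ndom = \ker\rmap$ this gives the claim.

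First I would record an elementary preliminary: because the closed elements $\clalg \subseteq \dga$ form a subalgebra and elements of degree $2$ commute, $\alpha$ extends for every $k$ to a linear map $\alpha^k : \pol^k H^2 \to \clalg^{2k}$, $x_1\cdots x_k \mapsto \alpha(x_1)\cdots\alpha(x_k)$, which is multiplicative in the evident sense (so $\alpha(q)\alpha^2(a) = \alpha^3(qa)$) and satisfies $\alpha^2_{|\kc} = d\gamma$. Now write $T = \sum_j q_j \otimes (a_j \wedge b_j)$ with $q_j \in H^2$ and $a_j, b_j \in \kc \subseteq \pol^2 H^2$, so that $\alpha\gamma^2(T) = \sum_j \alpha(q_j)\gamma(a_j)\gamma(b_j) \in \dga^8$. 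Applying the Leibniz rule, using $d\alpha(q_j) = 0$ and $d\gamma(a_j) = \alpha^2(a_j)$, $d\gamma(b_j) = \alpha^2(b_j)$, and noting that $\alpha(q_j)$ and the $\alpha^2(a_j),\alpha^2(b_j)$ have even degree and so may be commuted past any factor without a sign, one gets
\[
d\bigl(\alpha\gamma^2(T)\bigr) = \sum_j \Bigl(\alpha^3(q_j a_j)\,\gamma(b_j) - \alpha^3(q_j b_j)\,\gamma(a_j)\Bigr) \in \dga^9 .
\]

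Then I would identify the right-hand side as $-\Phi\bigl(\rmap(T)\bigr)$, where $\Phi : \kc \otimes \pol^3 H^2 \to \dga^9$ is the linear map $a \otimes c \mapsto \gamma(a)\cdot\alpha^3(c)$ — a well-defined composite of $\gamma\otimes\alpha^3$ with the multiplication of $\dga$. Indeed $\alpha^3(c)$ has even degree, so $\gamma(a)\alpha^3(c)=\alpha^3(c)\gamma(a)$, and since $b_j q_j = q_j b_j$ in $\pol^3 H^2$ each summand above equals $\Phi(b_j \otimes a_j q_j) - \Phi(a_j \otimes b_j q_j)$; summing, $d\bigl(\alpha\gamma^2(T)\bigr) = -\Phi\bigl(\sum_j (a_j \otimes b_j q_j - b_j \otimes a_j q_j)\bigr) = -\Phi\bigl(\rmap(T)\bigr)$, where one uses that $\rmap(T)$ lies in $\kc \otimes \pol^3 H^2$ because $a_j,b_j\in\kc$. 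As $T \in \ndom = \ker\rmap$ this vanishes, proving the proposition.

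I do not expect a genuine obstacle here: the only thing requiring care is the sign bookkeeping in the Leibniz computation, which is painless because every element one commutes past has even degree, together with the observation that the resulting expression is exactly $\gamma\otimes\alpha^3$ applied to $\rmap(T)$ — which is precisely the point of how the relation defining $\ndom$ was chosen. If anything is subtle it is keeping track of which tensor slot holds the $\kc$-element (so that $\Phi$ is even defined), but this is automatic from the formula for $\rmap$.
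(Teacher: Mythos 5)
Your proof is correct and follows essentially the same route as the paper: compute $d(\alpha\gamma^2)$ by Leibniz, observe that the result factors through $\rmap$ (your map $\Phi = \gamma\cdot\alpha^3$ is exactly the paper's observation that expressions of the form $\alpha(\alpha^2\wedge\gamma)$ are pullbacks along $\rmap$), and conclude it vanishes on $\ndom = \ker\rmap$. The only difference is presentational — you work with explicit decomposable elements where the paper uses its calculus of symmetric/antisymmetric products of algebra-valued maps — and all signs and well-definedness checks are in order.
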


We will clarify the notation and prove Proposition \ref{prop:ag2-closed} in \S\ref{subsec:special_proof}.

\begin{defn}
The pentagonal Massey tensor is the linear map
\[ \nmt : \ndom \to H^8 \]
induced by the restriction of $\alpha \gamma^2$ to $\ndom$.
\end{defn}

In general $\nmt$ has a slightly subtle dependence on the choices of $\alpha$
and $\gamma$, as set out in Theorem \ref{thm:choices}. We find it useful
to prove separately in the next subsection the following special case.

\begin{thm}
\label{thm:special}
Let $\dga$ be a DGA with $H^3 = 0$. Then \eqref{eq:chains} induces a
well-defined linear map $\nmt : \ndom \to H^8$, independent of the
choices of $\alpha$ and $\gamma$.
\end{thm}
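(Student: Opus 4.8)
The plan is to show that changing the auxiliary choices $\alpha$ and $\gamma$ alters the cochain map $\alpha\gamma^2$ restricted to $\ndom$ only by a coboundary plus something that is already exact, so that the induced map to $H^8$ is unchanged. I would proceed in two steps, first varying $\gamma$ with $\alpha$ fixed, then varying $\alpha$.

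\textbf{Step 1: independence of $\gamma$.} Suppose $\gamma, \gamma' : \kc \to \dga^3$ both satisfy $\alpha^2_{|\kc} = d\gamma = d\gamma'$. Then $\beta := \gamma' - \gamma$ takes closed values in $\dga^3$, and since $H^3 = 0$ we may write $\beta = d\sigma$ for some $\sigma : \kc \to \dga^2$. Now expand $\alpha(\gamma')^2 - \alpha\gamma^2$ on an element $\xi = \sum q \otimes (zw \wedge xy) \in \ndom$: the difference is a sum of terms of the form $\alpha(q)(\gamma(zw)\beta(xy) + \beta(zw)\gamma(xy) + \beta(zw)\beta(xy))$ (with the appropriate wedge antisymmetrisation). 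Each such term, after substituting $\beta = d\sigma$, should be rewritten modulo coboundaries using $d(\alpha(q)) = 0$ and $d\gamma(zw) = \alpha(zw)$; one finds that the correction is cohomologous to an expression built from $\alpha$ and $\sigma$ applied to $\xi$, and the key point is that this expression, when restricted to $\ndom$ (i.e. when the defining antisymmetry relation $zwq \leftrightarrow xyq$ in $\pol^3\otimes\pol^2$ is imposed), vanishes. This is exactly parallel to the mechanism by which Proposition \ref{prop:ag2-closed} forces closedness on $\ndom$ rather than on all of $H^2 \otimes \Lambda^2 \kc$.

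\textbf{Step 2: independence of $\alpha$.} Let $\alpha, \alpha' : H^2 \to \clalg^2$ be two right inverses. Then $\delta := \alpha' - \alpha$ maps $H^2$ into the exact elements of $\clalg^2$, so $\delta = d\tau$ for some $\tau : H^2 \to \dga^1$. One then has to adjust $\gamma$ compatibly: since $(\alpha')^2_{|\kc} = \alpha^2_{|\kc} + d(\alpha\tau + \tau\alpha' )_{|\kc}$ (up to sign conventions on the Leibniz rule), a valid choice is $\gamma' = \gamma + (\alpha\tau + \tau\alpha')_{|\kc}$, and by Step 1 any other valid $\gamma'$ gives the same answer. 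Substituting $\alpha' = \alpha + d\tau$ and this $\gamma'$ into $\alpha'(\gamma')^2$ and expanding, every term containing a factor of $d\tau$ or $\tau$ should either be an honest coboundary or be killed by the $\ndom$ relation, by the same bookkeeping as in Step 1. Combining Steps 1 and 2 gives that $\nmt$ is independent of all choices.

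\textbf{Main obstacle.} The routine part is the Leibniz-rule expansion; the real work is verifying that the leftover (non-coboundary) terms genuinely lie in the annihilator of $\ndom$. This requires carefully tracking how the cyclic/antisymmetry relations defining $\ndom \subseteq \ker\rmap$ interact with the symmetrisations implicit in writing products like $\alpha(q)\gamma(zw)\sigma(xy)$ — essentially one must show these correction terms factor through the map $\rmap$ of \eqref{eq:rmap}, so that they vanish on $\ker\rmap$. I would expect to organise this by noting that each correction term is symmetric under an exchange of two of the "slots" in a way that matches the $xy \leftrightarrow zw$ antisymmetry of $\Lambda^2\pol^2 V$, forcing a sign cancellation precisely on $\ndom$. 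The hypothesis $H^3 = 0$ is what makes Step 1 clean (it is needed to solve $\beta = d\sigma$), and it is plausible that without it one picks up exactly the ambiguity described in the general Theorem \ref{thm:choices}.
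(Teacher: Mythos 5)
Your plan follows exactly the same route as the paper: fix $\alpha$ and vary $\gamma$ first (using $H^3=0$ to write $\gamma'-\gamma$ as a coboundary), then vary $\alpha$ with a compatible adjustment of $\gamma$, and in both steps show the discrepancy is exact plus terms that factor through $\rmap$ and hence die on $\ndom = \ker\rmap$. Step 1 as you describe it is essentially the paper's argument and is complete in all but the one-line bookkeeping. The caveat is Step 2: you have correctly identified the mechanism and the right choice of $\gamma'$ (up to the factor-of-$2$/symmetrisation conventions you flag), but the assertion that "every term containing a factor of $d\tau$ or $\tau$ should either be an honest coboundary or be killed by the $\ndom$ relation" is precisely where the work lies, and it is not automatic term by term. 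In the paper this occupies a full page: after the Leibniz expansion the residual terms must be grouped and each group needs a specific, non-obvious exact correction (e.g.\ $\tfrac{2}{3}d(\beta(\alpha d\beta))^2$, $\tfrac{3}{8}d(\beta\alpha\beta\wedge\beta d\beta)$, $\tfrac{1}{6}d(\beta(\beta d\beta)^2)$) before it factors through $\rmap$; individual terms like $\beta(d\beta)^2\wedge\alpha\beta$ do not vanish on $\ndom$ on their own. So the proposal is a correct outline with the decisive computation deferred rather than carried out; to turn it into a proof you would need to exhibit those coboundary corrections explicitly.
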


\begin{ex}
Suppose $X$ is a simply-connected 8-manifold with $H^3(X) = 0$, and
$a, x_1, x_2, x_3 \in H^2(X)$ such that $ax_i = 0 \in H^4(X)$. Then
$x_1 \otimes (ax_2 \wedge ax_3) + x_2 \otimes (ax_3 \wedge ax_1) + x_3 \otimes (ax_1 \wedge ax_2) \in \ndom$. If $\alpha, \beta_i \in \Omega^2(X)$ represent $a$ and $x_i$ respectively
and $\gamma_i \in \Omega^3(X)$ satisfy $d\gamma_i = \alpha\beta_i$, then
the result of evaluating $\nmt$ on
$x_1 (ax_2 \wedge ax_3) + x_2 (ax_3 \wedge ax_1) + x_3 (ax_1 \wedge ax_2)$ is the cohomology class
\begin{equation}
\label{eq:fm}
[\beta_1\gamma_2\gamma_3 + \beta_2\gamma_3\gamma_1 + \beta_3\gamma_1\gamma_2] 
\in H^8(X) .
\end{equation}
As a special case of Theorem \ref{thm:special}
(and Remark \ref{rmk:obstruction}) we thus we recover the
claim from  Fern\'andez--Muñoz \cite[Lemma~3.1]{fernandez08} that the class
\eqref{eq:fm} is independent of the choices, and if it is non-zero then
$X$ cannot be formal.
\end{ex}

\begin{ex}
Suppose $H^2 \cong \Q^3(x,y,z)$, $H^4 \cong \Q$ and the product $\pol^2 H^2 \to H^4$ is given by the quadratic form $ax+by+cz \mapsto a^2 + b^2 + c^2$.
An 8-manifold with such a cohomology algebra has the ``hard Lefschetz
property'' in the sense that there is a class $\varphi \in H^4$ such that
multiplication by $\varphi$ defines an isomorphism $H^2 \to H^6$.

By considering $\kc$ as the $SO(3)$-representation $\sym^2_0 \Q^3$
and decomposing $H^2 \otimes \Lambda^2 \kc$ into irreducible $SO(3)$-representations one finds that $\ndom$ is 1-dimensional. One can take
\[
\sum_{\text{cyc}} x \otimes \bigl(yz \wedge (y^2-z^2) - xy \wedge xz \bigr)
\]
as an explicit generator.
One can use \cite[Proposition 3.4(i)]{bmp} to produce minimal DGAs where $\nmt$
is non-trivial on this generator and whose cohomology algebra satisfies
8-dimensional Poincar\'e duality, and Sullivan \cite[Theorem 13.2]{sullivan77}
to realise such a DGA as the minimal model of a closed simply-connected
8-manifold. 
This shows that the result of Cavalcanti \cite[Theorem 4]{cavalcanti06} that
\tkc{} $4n$-manifolds with the hard Lefschetz property and $b_n \leq 2$ are formal cannot be extended to $b_n = 3$.
\end{ex}

\subsection{Well-definedness of the degree 8 component of \texorpdfstring{$\nmt$}{P} when \texorpdfstring{$H^3 = 0$}{H\^3 = 0}}
\label{subsec:special_proof}

We now prove Proposition \ref{prop:ag2-closed} and Theorem~\ref{thm:special}.
Let us first set up some notation to keep the calculations manageable.

For linear maps $\alpha, \beta : H^2 \to \dga^*$, let
$\alpha\beta : \pol^2 H^2 \to \dga^*$ denote the linear map induced by
the symmetric bilinear map
$H^2 \times H^2 \to \dga^*,
(x, y) \mapsto \alpha(x)\beta(y) + \alpha(y)\beta(x)$.
This product is graded symmetric in the sense that if $\alpha$ and $\beta$ take
values in $\dga^r$ and $\dga^s$ respectively, then
$\beta\alpha = (-1)^{rs}\beta\alpha$.
If $\alpha$ takes values in the even degree part of $\dga$, then we can
also define $\alpha^2 : \pol^2 H^2 \to \dga^*$ to be induced by
$(x,y) \mapsto \alpha(x)\alpha(y)$. Note that $\alpha\alpha = 2\alpha^2$.
We will not notationally distinguish the restrictions of these maps
to $\kc \subseteq \pol^2 H^2$.

For $\gamma, \delta : \kc \to \dga^*$ (and $\gamma$ of odd degree $r$),
define $\gamma \wedge \delta: \Lambda^2\kc \to \dga^*$ (and $\gamma^2$) 
analogously; this is graded antisymmetric in the sense that
$\delta \wedge \gamma = (-1)^{1+rs}\gamma \wedge \delta$.

The recurring theme in the proofs below is to show that a function on
$H^2 \otimes \Lambda^2 \kc$ vanishes on $\ndom$ by factoring it through the map
$\rmap : H^2 \otimes \Lambda^2 \kc \to \pol^3H^2 \otimes \kc$
from \eqref{eq:rmap}.
Note that for linear maps $\alpha_1, \alpha_2, \alpha_3 : H^2 \to \dga^*$
and $\gamma : \kc \to \dga^*$, the composition of $\rmap$ with
\begin{align*} \pol^3 H^2 \otimes \kc \to& \dga^*, \\
xyz \otimes p \mapsto&
\alpha_1(x)\alpha_2(y)\alpha_3(z)\gamma(p) +
\alpha_2(x)\alpha_3(y)\alpha_1(z)\gamma(p) +
\alpha_3(x)\alpha_1(y)\alpha_2(z)\gamma(p) \\ +&
\alpha_1(x)\alpha_3(y)\alpha_2(z)\gamma(p) +
\alpha_2(x)\alpha_1(y)\alpha_3(z)\gamma(p) +
\alpha_3(x)\alpha_2(y)\alpha_1(z)\gamma(p)
\end{align*}
equals
\begin{equation}
\label{eq:vanishing}
\alpha_1 (\alpha_2\alpha_3 \wedge \gamma) +
\alpha_2 (\alpha_3\alpha_1 \wedge \gamma) +
\alpha_3 (\alpha_1\alpha_2 \wedge \gamma) :
H^2 \otimes \Lambda^2 \kc \to \dga^* .
\end{equation}
Thus the restriction of any map of the form \eqref{eq:vanishing} to $\ndom$
will vanish.

\begin{proof}[Proof of Proposition \ref{prop:ag2-closed}]
We now verify the claim that the map $\alpha\gamma^2$ takes
closed values on $\ndom$.
As maps $H^2 \otimes \Lambda^2\kc \to \dga^9$,
\[ d(\alpha\gamma^2) = \alpha d\gamma \wedge \gamma = \alpha \alpha^2 \wedge \gamma . \]
The right-hand side is of the form \eqref{eq:vanishing} so that its restriction
to $\ndom$ vanishes.
Thus $\alpha\gamma^2$ maps $\ndom \to \clalg^8$ as claimed. 
\end{proof}

\begin{proof}[Proof of Theorem \ref{thm:special}]
We start by checking that, for any fixed choice of $\alpha$, $\nmt$ is independent of
the choice of $\gamma$. If $\gamma'$ is a different choice, then
$\gamma'-\gamma : \kc \to \dga^3$ takes closed values. The hypothesis
that $H^3 = 0$ forces the difference to be exact, \ie
\[ \gamma' = \gamma + d\eta \]
for some $\eta : \kc \to \dga^2$. Then
\begin{align*}
&\alpha(\gamma')^2 - \alpha\gamma^2 +d (\alpha(\gamma \wedge \eta - \half \eta \wedge d\eta)) \\
= \; &\alpha \gamma \wedge d\eta + \alpha (d\eta)^2 + \alpha (d(\gamma \wedge \eta) - (d\eta)^2) \\
= \; & \alpha d\gamma \wedge \eta = \alpha \alpha^2\wedge \eta .
\end{align*}
The right-hand side is again of the form \eqref{eq:vanishing} so that its
restriction to $\ndom$ vanishes. Hence the restriction of
$\alpha(\gamma')^2 - \alpha\gamma^2$ to $\ndom$ takes exact values as required.

It remains to show that given $\alpha, \gamma$, and another choice
$\alpha'$, there is also a choice of $\gamma'$ so that the restrictions to
$\ndom$ of $\alpha\gamma^2$ and $\alpha'(\gamma')^2$ differ by exact values.
We can write $\alpha' = \alpha + d\beta$ for some $\beta : H^2 \to \dga^1$,
and then take $\gamma' = \gamma + \sigma_{|\kc}$ for
$\sigma = \alpha\beta + \half \beta d\beta : \pol^2H^2 \to \dga^3$.
Then, as functions $H^2 \otimes \Lambda^2 \kc \to \dga^8$,
\begin{equation}
\label{eq:expand}
\begin{aligned}
& \alpha'(\gamma')^2 - \alpha\gamma^2 -d\left(\beta(\gamma')^2\right) +
d\left(\beta(\half \alpha\beta + \third\beta d\beta) \wedge \gamma\right) \\
= \; &
\alpha\gamma \wedge \sigma + \alpha\sigma^2 + \beta (\alpha')^2 \wedge \gamma'
+ d\left(\beta(\half \alpha\beta + \third\beta d\beta)\right) \wedge \gamma
+ \beta(\half \sigma + \tfrac{1}{12}\beta d\beta) \wedge \alpha^2 \\
= \; & \left(\alpha\sigma +\beta(\alpha')^2 + 
d\left(\beta(\half \alpha\beta + \third\beta d\beta)\right)\right) \wedge \gamma
+ (\half \alpha\sigma + \beta(\alpha')^2 -\half\beta\alpha^2) \wedge \sigma
+ \tfrac{1}{12}\beta d\beta \wedge \alpha^2 .
\end{aligned}
\end{equation}
Expanding a map $H^2 \otimes \kc \to \dga^5$:
\begin{align*}
& \alpha\sigma +\beta(\alpha')^2 + 
d\left(\beta(\half \alpha\beta + \third\beta d\beta)\right) \\
= \; & \alpha\alpha\beta + \half\alpha\beta d\beta
+ \beta(\alpha^2 + \alpha d\beta + (d\beta)^2)
+ d\beta(\half \alpha\beta + \third \beta d\beta)
- \beta\left(\half\alpha d\beta +\tfrac{2}{3}(d\beta)^2\right) \\ 
= \; & \alpha\alpha\beta + \beta\alpha^2 +
\half (\alpha\beta d\beta + \beta \alpha d\beta + d\beta \alpha\beta)
+ \third \left((\beta d\beta)^2 + d\beta \beta d\beta\right)
\end{align*}
The right hand side factors through $\pol^3 H^2$.
Hence the first term on the right hand side of \eqref{eq:expand} factors
through $\rmap$, so that its restriction to $\ndom$ vanishes.

The remaining terms on the right-hand side of \eqref{eq:expand} expand to
\begin{align*}
& \left(\half \alpha\alpha\beta + \quart \alpha \beta d\beta +
\half\beta\alpha^2 + \beta\alpha d\beta + \beta(d\beta)^2\right) \wedge \sigma
+ \tfrac{1}{12}\beta d\beta \wedge \alpha^2 \\
= \; & 
(\half \alpha\alpha\beta + \half\beta\alpha^2) \wedge \sigma +
(\quart \alpha\beta d\beta + \beta\alpha d\beta)\wedge \alpha \beta 
+ \tfrac{1}{12}\beta d\beta \wedge \alpha^2 \\
&+\quart\alpha(\beta d\beta)^2 + \half\beta\alpha d\beta \wedge \beta d\beta
+ \beta(d\beta)^2\wedge \alpha\beta + \half \beta(d\beta)^2 \wedge \beta d\beta
\end{align*}
The first term on the right-hand side factors through $\rmap$ as it is.
The remaining terms can be arranged into 3 groups, each of which factors
through $\rmap$ after addition of an exact term.
\begin{align*}
&(\quart \alpha\beta d\beta + \beta\alpha d\beta)\wedge \alpha \beta 
+ \tfrac{1}{12}\beta d\beta \wedge \alpha^2
+ \tfrac{2}{3}d\left(\beta(\alpha d\beta)\right)^2\\
= &\; - \tfrac{1}{12}(\alpha\alpha\beta+\beta\alpha^2)\wedge \beta d\beta
+ \third(\alpha\beta d\beta + 3\beta\alpha d\beta) \wedge \alpha \beta
+ \tfrac{2}{3}d\beta(\alpha\beta)^2 -
\tfrac{2}{3}\beta\alpha d\beta \wedge \alpha\beta \\
= &\; - \tfrac{1}{12}(\alpha\alpha\beta+\beta\alpha^2)\wedge \beta d\beta
+ \third(\alpha\beta d\beta + \beta\alpha d\beta +d\beta\alpha\beta)
\wedge \alpha \beta
\end{align*}
and
\begin{align*}
&\quart\alpha(\beta d\beta)^2 + \half\beta\alpha d\beta \wedge \beta d\beta
+ \beta(d\beta)^2\wedge \alpha\beta
+ \tfrac{3}{8}d(\beta\alpha\beta\wedge \beta d\beta) \\ 
= \; & 
\quart\alpha(\beta d\beta)^2
+ \tfrac{1}{8}\beta\alpha d\beta \wedge \beta d\beta
+ \quart \beta(d\beta)^2\wedge \alpha\beta
+ \tfrac{3}{8}d\beta\alpha\beta\wedge \beta d\beta \\
= \; & 
\tfrac{1}{8}(\alpha\beta d\beta + \beta \alpha d\beta + d\beta\alpha\beta)
\wedge \beta d \beta
+ \quart\left(d\beta \beta d\beta + \beta(d\beta)^2\right)\wedge \alpha\beta
\end{align*} 
and
\begin{equation*}
 \half \beta(d\beta)^2 \wedge \beta d\beta
+ \tfrac{1}{6} d \left(\beta(\beta d\beta)^2\right)
= \tfrac{2}{3}\left(\beta(d\beta)^2 + d\beta \beta d\beta\right) \wedge \beta d\beta
\end{equation*} 
In summary, $\alpha'(\gamma')^2 - \alpha\gamma^2$ is a sum of exact terms
and terms that factor through $\rmap$, so the restriction to $\ndom$ is exact
as required. This completes the proof of Theorem \ref{thm:special}.
\end{proof}

\subsection{Naturality}
\label{subsec:nat}

Suppose that $f : \dga \rightarrow \dgab$ is a morphism of DGAs, it induces a homomorphism $H^2(f) \otimes \Lambda^2 \pol^2 H^2(f) : H^2(\dga) \otimes \Lambda^2 \pol^2 H^2(\dga) \rightarrow H^2(\dgab) \otimes \Lambda^2 \pol^2 H^2(\dgab)$, which restricts to a map $H^2(\dga) \otimes \Lambda^2 \kc_{\dga} \rightarrow H^2(\dgab) \otimes \Lambda^2 \kc_{\dgab}$. Another restriction of the same map is $\rfunc(H^2(f)) : \rfunc(H^2(\dga)) \rightarrow \rfunc(H^2(\dgab))$. Hence by restricting $H^2(f) \otimes \Lambda^2 \pol^2 H^2(f)$ (or $\rfunc(H^2(f))$) to $\ndom_{\dga}$ we get a homomorphism $\ndom_{\dga} = (H^2(\dga) \otimes \Lambda^2 \kc_{\dga}) \cap \rfunc(H^2(\dga)) \rightarrow \ndom_{\dgab} = (H^2(\dgab) \otimes \Lambda^2 \kc_{\dgab}) \cap \rfunc(H^2(\dgab))$.

\begin{prop} \label{prop:nat}
If $H^2(f) : H^2(\dga) \rightarrow H^2(\dgab)$ is an isomorphism, then for any choice of $\nmt_{\dga}$ there is a choice of $\nmt_{\dgab}$ such that the diagram 
\[
\xymatrix{
\ndom_{\dga} \ar[r]^-{\nmt_{\dga}} \ar[d]_-{\rfunc(H^2(f))} & H^8(\dga) \ar[d]^-{H^8(f)} \\
\ndom_{\dgab} \ar[r]^-{\nmt_{\dgab}} & H^8(\dgab)
}
\]
commutes. 
\end{prop}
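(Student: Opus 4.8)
The plan is to produce the data defining a compatible $\nmt_{\dgab}$ by transporting the data defining $\nmt_{\dga}$ along $f$, arranging that the underlying cochain-level maps are intertwined on the nose. Recall from \S\ref{subsec:special_def} that $\nmt_{\dga}$ is induced by a choice of right inverse $\alpha : H^2(\dga) \to \clalg^2_{\dga}$ of the projection to cohomology, together with a linear map $\gamma : \kc_{\dga} \to \dga^3$ satisfying $d\gamma = \alpha^2$ on $\kc_{\dga}$. Since $f$ maps closed elements to closed elements and $H^2(f)$ is an isomorphism, I would first set $\alpha_{\dgab} := f \circ \alpha \circ H^2(f)^{-1} : H^2(\dgab) \to \clalg^2_{\dgab}$; this is a right inverse of the projection to $H^2(\dgab)$, and by construction $\alpha_{\dgab} \circ H^2(f) = f \circ \alpha$.

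Next I would choose $\gamma_{\dgab}$. The isomorphism $\pol^2 H^2(f) : \pol^2 H^2(\dga) \to \pol^2 H^2(\dgab)$ carries $\kc_{\dga}$ injectively (though in general not onto) into $\kc_{\dgab}$, since $f$ respects the cup products $\pol^2 H^2 \to H^4$. For $v \in \kc_{\dga}$ with image $\bar v := \pol^2 H^2(f)(v)$, multiplicativity of $f$ gives
\[
\alpha_{\dgab}^2(\bar v) = f(\alpha^2(v)) = f(d\gamma(v)) = d\bigl(f(\gamma(v))\bigr),
\]
so one may define $\gamma_{\dgab}$ on the subspace $\pol^2 H^2(f)(\kc_{\dga}) \subseteq \kc_{\dgab}$ by $\gamma_{\dgab}(\bar v) := f(\gamma(v))$, which is linear and solves $d\gamma_{\dgab} = \alpha_{\dgab}^2$ there. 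On a chosen linear complement of $\pol^2 H^2(f)(\kc_{\dga})$ in $\kc_{\dgab}$ I would extend $\gamma_{\dgab}$ to any linear map solving $d\gamma_{\dgab} = \alpha_{\dgab}^2$, which is possible because $\alpha_{\dgab}^2$ takes exact values on $\kc_{\dgab}$ (the same argument that lets one pick $\gamma$ in the first place). This gives a legitimate choice of $\nmt_{\dgab}$, satisfying $\gamma_{\dgab} \circ (\pol^2 H^2(f))_{|\kc_{\dga}} = f \circ \gamma$.

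It then remains to check that the square commutes, and in fact it does so already for the cochain-valued maps $\alpha\gamma^2 : \ndom_{\dga} \to \clalg^8_{\dga}$ and $\alpha_{\dgab}\gamma_{\dgab}^2 : \ndom_{\dgab} \to \clalg^8_{\dgab}$ that induce $\nmt_{\dga}$ and $\nmt_{\dgab}$. Writing a general element of $\ndom_{\dga} \subseteq H^2(\dga) \otimes \Lambda^2 \kc_{\dga}$ as a sum of terms $q \otimes (a \wedge b)$, its image in $\ndom_{\dgab}$ is the corresponding sum of terms $H^2(f)(q) \otimes (\pol^2 H^2(f)(a) \wedge \pol^2 H^2(f)(b))$, and applying $f$ and using multiplicativity together with the two compatibility identities yields
\[
f\bigl(\alpha(q)\,\gamma(a)\,\gamma(b)\bigr) = \alpha_{\dgab}(H^2(f)(q))\,\gamma_{\dgab}(\pol^2 H^2(f)(a))\,\gamma_{\dgab}(\pol^2 H^2(f)(b)) .
\]
Here one uses that $\pol^2 H^2(f)(a)$ and $\pol^2 H^2(f)(b)$ lie in $\pol^2 H^2(f)(\kc_{\dga})$, where $\gamma_{\dgab}$ coincides with the pushforward of $\gamma$, so the extension of $\gamma_{\dgab}$ over the complement plays no role. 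Hence $f \circ (\alpha\gamma^2)$ equals $\alpha_{\dgab}\gamma_{\dgab}^2$ precomposed with the induced map $\ndom_{\dga} \to \ndom_{\dgab}$, and passing to cohomology gives the commutative square.

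I expect the only genuinely delicate point to be the bookkeeping in the last step: $\kc_{\dgab}$ may be strictly larger than $\pol^2 H^2(f)(\kc_{\dga})$, so $\gamma_{\dgab}$ has to be built by extension, and one must be sure that the portion of $\gamma_{\dgab}$ that actually enters the evaluation of $\nmt_{\dgab}$ on the image of $\ndom_{\dga}$ is precisely the pushforward of $\gamma$. Everything else is a formal consequence of $f$ being a morphism of DGAs; the hypothesis that $H^2(f)$ is an isomorphism is used only to build $\alpha_{\dgab}$ (and hence $\nmt_{\dgab}$) out of the prescribed choice for $\dga$.
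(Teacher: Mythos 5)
Your proposal is correct and follows essentially the same route as the paper's proof: transport $\alpha$ via $\alpha_{\dgab} = f \circ \alpha \circ H^2(f)^{-1}$, define $\gamma_{\dgab}$ on $\pol^2 H^2(f)(\kc_{\dga})$ as the pushforward of $\gamma$ and extend arbitrarily over a complement in $\kc_{\dgab}$, then observe that the cochain-level square for $\alpha\gamma^2$ commutes on the nose. The one point you flag as delicate (that only the pushforward part of $\gamma_{\dgab}$ enters the evaluation on the image of $\ndom_{\dga}$) is handled the same way in the paper, so there is nothing to add.
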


\begin{proof}
The map $\nmt_{\dga}$ is determined by the choice of $\alpha_{\dga}$ and $\gamma_{\dga}$ and $\nmt_{\dgab}$ is determined by the choice of $\alpha_{\dgab}$ and $\gamma_{\dgab}$. So it is enough to show that for any choice of $\alpha_{\dga}$ and $\gamma_{\dga}$ we can choose $\alpha_{\dgab}$ and $\gamma_{\dgab}$ such that the diagram
\[
\xymatrix{
H^2(\dga) \otimes \Lambda^2 \kc_{\dga} \ar[rr]^-{\alpha_{\dga} \gamma_{\dga}^2} \ar[d]_-{H^2(f) \otimes \Lambda^2 \pol^2 H^2(f)} & & \dga^8 \ar[d]^-{f} \\
H^2(\dgab) \otimes \Lambda^2 \kc_{\dgab} \ar[rr]^-{\alpha_{\dgab} \gamma_{\dgab}^2} & & \dgab^8
}
\]
commutes, which follows from the commutativity of the diagrams 
\[
\xymatrix{
H^2(\dga) \ar[r]^-{\alpha_{\dga}} \ar[d]_-{H^2(f)} & \clalg^2(\dga) \ar[d]^-{f} & \ar@{}[d]|{\textstyle{\text{and}}} & & \kc_{\dga} \ar[r]^-{\gamma_{\dga}} \ar[d]_-{\pol^2 H^2(f)} & \dga^3 \ar[d]^-{f} \\
H^2(\dgab) \ar[r]^-{\alpha_{\dgab}} & \clalg^2(\dgab) & & & \kc_{\dgab} \ar[r]^-{\gamma_{\dgab}} & \dgab^3
}
\]

Since $H^2(f)$ is an isomorphism, for any section $\alpha_{\dga}$ of the projection $\clalg^2(\dga) \rightarrow H^2(\dga)$ we can choose $\alpha_{\dgab} = f \circ \alpha_{\dga} \circ H^2(f)^{-1}$. Then $\alpha_{\dgab}$ is a section of $\clalg^2(\dgab) \rightarrow H^2(\dgab)$ and the first diagram commutes. 

Since $\pol^2 H^2(f)$ is also an isomorphism, its restriction is an injective map $\pol^2 H^2(f) \big| _{\kc_{\dga}} : \kc_{\dga} \rightarrow \kc_{\dgab}$, and $\kc_{\dgab} = \pol^2 H^2(f)(\kc_{\dga}) \oplus \kc'$ for some subspace $\kc' \leq \kc_{\dgab}$. Given a map $\gamma_{\dga} : \kc_{\dga} \rightarrow \dga^3$ such that $d \gamma_{\dga} = \alpha_{\dga}^2 \big| _{\kc_{\dga}}$ we define $\gamma_{\dgab}$ by $\gamma_{\dgab} \big| _{\pol^2 H^2(f)(\kc_{\dga})} = f \circ \gamma_{\dga} \circ \pol^2 H^2(f)^{-1}$ and setting $\gamma_{\dgab} \big| _{\kc'}$ to be any map such that $d \gamma_{\dgab} \big| _{\kc'} = \alpha_{\dgab}^2 \big| _{\kc'}$. Then $d \gamma_{\dgab} = \alpha_{\dgab}^2 \big| _{\kc_{\dgab}}$ and the second diagram commutes too. 
\end{proof}

\begin{rmk}
\label{rmk:obstruction}
If $H^3(\dga) \cong H^3(\dgab) \cong 0$, then $\nmt_{\dga}$ and $\nmt_{\dgab}$ are independent of choices by Theorem \ref{thm:special}, and any morphism $f : \dga \rightarrow \dgab$ that induces an isomorphism $H^2(f) : H^2(\dga) \rightarrow H^2(\dgab)$ intertwines $\nmt_{\dga}$ and $\nmt_{\dgab}$. It follows that in this case $\nmt$ is invariant under quasi-isomorphisms, in particular it is an obstruction to formality.
\end{rmk}

\section{The pentagonal Massey tensor of closed 8-manifolds}

\subsection{\texorpdfstring{$\nmt$}{P} of spaces}

First we define $\nmt_X$ for a space $X$.

\begin{defn}
Let $X$ be a simply-connected space. Let $\kc_X \leq \pol^2 H^2(X)$ denote the kernel of the cup product $\pol^2 H^2(X) \to H^4(X)$, and let $\ndom_X \leq H^2(X) \otimes \Lambda^2 \kc_X$ be the kernel of the restriction of \eqref{eq:rmap}. We define $\nmt_X : \ndom_X \rightarrow H^8(X)$ to be $\nmt_{\Omega^*_{PL}(X)}$, using the canonical identification between $H^*(X)$ and $H^*(\Omega^*_{PL}(X))$.
\end{defn}

By Theorem \ref{thm:special} $\nmt_X$ is well-defined if $H^3(X) \cong 0$. If $H^3(X) \cong 0$ and $f : \dga \rightarrow \Omega^*_{PL}(X)$ is a quasi-isomorphism for some DGA $\dga$, then by Proposition \ref{prop:nat} the (well-defined) maps $\nmt_A$ and $\nmt_{\Omega^*_{PL}(X)}$ fit into a commutative diagram
\[
\xymatrix{
\ndom_{\dga} \ar[rr]^-{\nmt_{\dga}} \ar[d]_-{\rfunc(H^2(f))} & & H^8(\dga) \ar[d]^-{H^8(f)} \\
\ndom_{\Omega^*_{PL}(X)} \ar[rr]^-{\nmt_{\Omega^*_{PL}(X)}} & & H^8(\Omega^*_{PL}(X))
}
\]
where the vertical maps are isomorphisms. This means that $\nmt_X$ can be computed from any model of $X$.

\begin{prop} \label{prop:nat2}
Let $f : X \rightarrow Y$ be a continuous map between simply-connected spaces such that $H^2(f) : H^2(Y) \rightarrow H^2(X)$ is an isomorphism. Then for any choice of $\nmt_Y$ there is a choice of $\nmt_X$ such that the diagram 
\[
\xymatrix{
\ndom_Y \ar[r]^-{\nmt_Y} \ar[d]_-{\rfunc(H^2(f))} & H^8(Y) \ar[d]^-{H^8(f)} \\
\ndom_X \ar[r]^-{\nmt_X} & H^8(X)
}
\]
commutes. 
\end{prop}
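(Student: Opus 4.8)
The plan is to reduce Proposition~\ref{prop:nat2} to the DGA-level statement already established in Proposition~\ref{prop:nat}, using the functoriality of the de~Rham--Sullivan PL-forms functor $\Omega^*_{PL}(-)$ together with the definition of $\nmt_X$ as $\nmt_{\Omega^*_{PL}(X)}$. First I would observe that a continuous map $f : X \to Y$ induces a morphism of DGAs $\Omega^*_{PL}(f) : \Omega^*_{PL}(Y) \to \Omega^*_{PL}(X)$, and under the canonical identifications $H^*(\Omega^*_{PL}(X)) \cong H^*(X)$ etc., the map $H^2(\Omega^*_{PL}(f))$ is identified with $H^2(f) : H^2(Y) \to H^2(X)$. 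The hypothesis that $H^2(f)$ is an isomorphism therefore says exactly that $\Omega^*_{PL}(f)$ satisfies the hypothesis of Proposition~\ref{prop:nat}.

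Next I would unwind how $\kc_X$, $\ndom_X$ and $\rfunc(H^2(f))$ relate across $f$. The cup product $\pol^2 H^2(X) \to H^4(X)$ is natural, so $H^2(f)$ carries $\kc_Y$ into $\kc_X$ (it is the restriction of an isomorphism of $\pol^2 H^2$'s, hence injective on $\kc_Y$, and lands in $\kc_X$ by naturality of the product). Similarly $\rmap$ from \eqref{eq:rmap} is natural in $V$, so the induced map $H^2(f) \otimes \Lambda^2\pol^2 H^2(f)$ carries $\ndom_Y$ into $\ndom_X$; this is precisely the map $\rfunc(H^2(f))$ restricted appropriately, matching the conventions set up just before Proposition~\ref{prop:nat}. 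Thus all the vertical arrows in the square are the ones coming from applying $\Omega^*_{PL}(-)$ to $f$ and passing to cohomology.

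With these identifications in place, the square in Proposition~\ref{prop:nat2} is literally the square in Proposition~\ref{prop:nat} applied to the DGA morphism $\Omega^*_{PL}(f)$: given any choice of $\nmt_Y = \nmt_{\Omega^*_{PL}(Y)}$ (determined by a choice of $\alpha_Y, \gamma_Y$), Proposition~\ref{prop:nat} produces a choice of $\alpha_X, \gamma_X$, hence a choice of $\nmt_X = \nmt_{\Omega^*_{PL}(X)}$, making the diagram commute. So the proof is essentially: apply $\Omega^*_{PL}(-)$ and cite Proposition~\ref{prop:nat}.

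The only real point requiring care --- and the step I expect to be the mild obstacle --- is bookkeeping around the canonical identification $H^*(X) \cong H^*(\Omega^*_{PL}(X))$, i.e.\ checking that this identification is natural in $X$ so that $H^2(\Omega^*_{PL}(f))$ genuinely corresponds to $H^2(f)$ and $H^8(\Omega^*_{PL}(f))$ to $H^8(f)$; this is standard (it is part of the basic theory of PL-forms, e.g.\ Sullivan \cite{sullivan77}), but it is what makes the two squares the same square rather than merely analogous ones. Everything else is formal naturality of cup products and of $\rmap$. This completes the plan.
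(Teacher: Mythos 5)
Your proposal is correct and is exactly the paper's argument: the official proof is the one-line "Apply Proposition \ref{prop:nat} to the induced map $\Omega^*_{PL}(Y) \rightarrow \Omega^*_{PL}(X)$," and your additional remarks on the naturality of the identification $H^*(X) \cong H^*(\Omega^*_{PL}(X))$ and of $\rmap$ are just the standard bookkeeping the paper leaves implicit.
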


\begin{proof}
Apply Proposition \ref{prop:nat} to the induced map $\Omega^*_{PL}(Y) \rightarrow \Omega^*_{PL}(X)$. 
\end{proof}

\begin{defn} \label{def:nbar}
Let $M$ be a simply-connected closed oriented $8$-manifold with $H^3(M) \cong 0$. We define the canonical element $\nmtb(M) \in \ndom_M^{\vee}$ by $\nmtb(M) = \left< [M], \cdot \right> \circ \nmt_M : \ndom_M \rightarrow \Q$, where $\left< [M], \cdot \right> : H^8(M) \rightarrow \Q$ denotes evaluation on the fundamental class $[M] \in H_8(M)$.
\end{defn}

\subsection{The group \texorpdfstring{$\Theta_8(\bigvee^r S^2)$}{Theta\_8}}

Recall the definition of E-manifolds (Definition \ref{def:eman}). We also recall some definitions and results from \cite{nagy20}, starting with the definition of $\Theta_8(\bigvee^r S^2)$:

\begin{defn} \label{def:theta}
For an integer $r \geq 0$ let 
\[
\textstyle
\Theta_8(\bigvee^r S^2) = \left\{ (M, \varphi) \left| 
\begin{gathered} 
\text{$M$ is a spin $8$-dimensional E-manifold} \\[-4pt] 
\text{$\varphi : \Z^r \rightarrow H_2(M;\Z)$ is an isomorphism} \\[-4pt] 
H_4(M;\Z) \cong 0
\end{gathered} 
\right.
\right\} \Biggm/ \sim
\]
where $(M, \varphi) \sim (M', \varphi')$ if there is an orientation-preserving diffeomorphism $F : M \rightarrow M'$ such that $\varphi' = H_2(F;\Z) \circ \varphi$. The equivalence class of $(M, \varphi)$ will be denoted by $[M, \varphi]$.
\end{defn}

\begin{rmk}
If we identify $\Z^r$ with $H_2(\bigvee^r S^2;\Z)$, then the choice of an isomorphism $\varphi : \Z^r \rightarrow H_2(M;\Z)$ determines a $4$-connected map $\bar{\varphi} : \bigvee^r S^2 \rightarrow M$ (up to homotopy). Conversely, if $M$ is a closed $8$-manifold, then the existence and choice of a $4$-connected map $\bar{\varphi} : \bigvee^r S^2 \rightarrow M$ ensures that $M$ is an E-manifold with $H_4 \cong 0$ and determines an isomorphism $\varphi : \Z^r \rightarrow H_2(M;\Z)$ respectively.
\end{rmk}

$\Theta_8(\bigvee^r S^2)$ is a group under ``connected sum along the $2$-skeleton". If $(M, \varphi)$ represents an element of $\Theta_8(\bigvee^r S^2)$, then $M$ has a unique String structure compatible with its orientation.

\begin{defn}
Let $P_3 = P_3(\bigvee^r S^2)$ denote the third Postnikov-stage of $\bigvee^r S^2$.
\end{defn}

The identification $\Z^r \cong H_2(\bigvee^r S^2;\Z)$ induces identifications $\Z^r \cong H_2(P_3;\Z)$ and $(\Q^r)^{\vee} \cong H^2(P_3)$.

\begin{defn}
Let $\widehat{\Omega}_8^{String}(P_3)$ denote the kernel of the homomorphism $\sigma : \Omega_8^{String}(P_3) \rightarrow \Z$ given by the signature.
\end{defn}

\begin{defn} \label{def:eta}
The map $\eta : \Theta_8(\bigvee^r S^2) \rightarrow \widehat{\Omega}_8^{String}(P_3)$ is given by $\eta([M, \varphi]) = [f]$, where $f : M \rightarrow P_3$ is the composition of the natural map $M \rightarrow P_3(M)$ and the homotopy inverse of $P_3(\bar{\varphi}) : P_3 = P_3(\bigvee^r S^2) \rightarrow P_3(M)$, where $\bar{\varphi} : \bigvee^r S^2 \rightarrow M$ is the map (well-defined up to homotopy) which induces the isomorphism $\varphi : \Z^r = H_2(\bigvee^r S^2;\Z) \rightarrow H_2(M;\Z)$.
\end{defn}

\begin{thm}[{\cite[Proposition 2.2.34]{nagy20}}] \label{thm:eta}
$\eta$ is a well-defined isomorphism.
\qed
\end{thm}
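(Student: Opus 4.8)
The statement is taken from \cite{nagy20}; the natural proof, which I would also use, proceeds via Kreck's modified surgery theory applied to the normal $3$-type of the manifolds involved. First I would identify that for a spin $8$-dimensional E-manifold $M$ with $H_2(M;\Z)\cong\Z^r$ and $H_4(M;\Z)\cong 0$, equipped with its unique compatible String structure $s$, the normal $3$-type is $B_3 = B\mathrm{String}\times P_3$. The two points to check are that $B\mathrm{String}$ is $7$-connected (so the tangential factor contributes nothing below the middle dimension) and that the $k$-invariants which could twist the product away from $B\mathrm{String}\times P_3$ vanish; the latter reduces to $H^3(P_3)=H^4(P_3)=0$, which holds because $\bigvee^r S^2\to P_3$ is a $4$-equivalence and $\bigvee^r S^2$ is $2$-dimensional. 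Since $\bar\varphi:\bigvee^r S^2\to M$ is $4$-connected, the pair $(s,f)$ with $f:M\to P_3$ the map of Definition \ref{def:eta} is itself a $4$-equivalence, \iet a normal $3$-smoothing, so $\eta$ is precisely the map sending $[M,\varphi]$ to the $B_3$-bordism class $[M,(s,f)]\in\Omega_8(B_3)=\Omega_8^{String}(P_3)$. Well-definedness is then immediate, since a diffeomorphism $F$ with $\varphi'=H_2(F;\Z)\circ\varphi$ is automatically compatible with the unique String structures and with the maps to $P_3$; and $H^4(M)=0$ forces $\sigma(M)=0$, so the image lies in $\widehat\Omega_8^{String}(P_3)$.

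For the homomorphism property I would note that connected sum along the $2$-skeleton of two such manifolds carries the evident $B_3$-structure and that the standard bordism (the trace of the defining surgeries) realises addition in $\Omega_8^{String}(P_3)$.

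The heart of the argument is bijectivity, which is what Kreck's machine delivers once the normal $3$-type is in hand. For surjectivity onto $\widehat\Omega_8^{String}(P_3)$, take a class with $\sigma=0$, represent it by $(W^8,g)$, and perform surgery below the middle dimension to upgrade $g$ to a normal $3$-smoothing; then $W$ is simply connected and spin with $H_2\cong\Z^r$ and $H_3=0$, and the only obstruction to killing $H_4$ as well --- \iet to arranging $(W,g)\in\Theta_8(\bigvee^r S^2)$ --- is the middle-dimensional surgery obstruction in $L_8(\Z)=\Z$, detected by the signature and hence vanishing by hypothesis. For injectivity, if $\eta([M,\varphi])=\eta([M',\varphi'])$, then a $B_3$-bordism between them can be surgered below the middle dimension rel boundary, and the obstruction to improving it to an $h$-cobordism lies in the relevant surgery obstruction group $L_9(\Z[\pi_1 B_3])=L_9(\Z)=0$; the $h$-cobordism theorem (applicable since $\pi_1=1$ and $\dim\geq 5$) then produces a diffeomorphism $M\cong M'$ respecting the normal $3$-smoothings, hence the polarizations.

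The main obstacle is this bijectivity step, specifically the bookkeeping in Kreck's machine: verifying that the reference maps to $P_3$ and the String structures persist through every surgery below the middle dimension, and confirming that in the simply-connected case the middle-dimensional obstruction is exactly (a multiple of) the signature with no further contribution. Correctly pinning down the normal $3$-type --- the vanishing of the relevant $k$-invariants --- is the other point requiring care, though as noted it reduces to the cohomology computation $H^3(P_3)=H^4(P_3)=0$.
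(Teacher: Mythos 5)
The paper does not prove this statement at all: it is imported verbatim from \cite[Proposition 2.2.34]{nagy20} with a \textup{\qedsymbol}, so there is no in-paper argument to compare yours against. That said, your outline via Kreck's modified surgery applied to the normal $3$-type $B\mathrm{String}\times P_3$ is exactly the strategy one expects the cited source to use, and the identification of the normal $3$-type is correct: $B\mathrm{String}$ is $7$-connected, $P_3$ has no homotopy above degree $3$, $H^4(M;\Z)=0$ forces the String condition and $H^3(M;\Z)=0$ its uniqueness, and $f$ is $4$-connected because $\bar\varphi$ is.

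The one place where your sketch papers over a genuine difficulty is injectivity. In modified surgery the obstruction to improving a $9$-dimensional $B$-bordism $W$ rel boundary to an $s$-cobordism is not an element of $L_9(\Z)$ but of Kreck's monoid $\ell_9(\Z)$, and what is needed is that this obstruction be \emph{elementary}; the vanishing of its image in $L_9(\Z)=0$ does not by itself deliver an $s$-cobordism. Kreck's theorems in the even-dimensional source case give, in the first instance, only a diffeomorphism after stabilising by connected sums with $S^4\times S^4$, and removing the stabilisation (here plausible because $H_4(M;\Z)=0$, so one wants a cancellation argument) is precisely the delicate bookkeeping you allude to but do not supply. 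On the surjectivity side, "killing $H_4$" of the surgered representative $W$ requires not just $\sigma(W)=0$ but that the intersection form on $H_4(W;\Z)/\mathrm{torsion}$ is even (which follows from the String condition via the Wu formula, so that zero signature forces the form to be hyperbolic) and that the torsion of $H_4(W;\Z)$ can also be removed; neither point is free. So the proposal is the right skeleton, but as written it substitutes the classical simply-connected surgery exact sequence for the modified-surgery obstruction theory that the problem actually demands, and that substitution is where a careful referee of \cite{nagy20} would push back.
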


\begin{prop} \label{prop:p3-bord}
$\widehat{\Omega}_8^{String}(P_3) \otimes \Q \cong H_8(P_3)$. 
\end{prop}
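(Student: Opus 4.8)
The plan is to compute both sides rationally by means of the Atiyah--Hirzebruch spectral sequence for String bordism, after peeling off the point. The one substantial input is the rational String bordism ring of a point. Since $MString$ is connective, the Hurewicz map gives $\Omega_*^{String}(pt)\otimes\Q\cong H_*(MString;\Q)\cong H_*(BString;\Q)$, and as $BString$ is the homotopy fibre of $\tfrac{p_1}{2}\colon BSpin\to K(\Z,4)$ its rational cohomology is the polynomial algebra $\Q[p_2,p_3,\dots]$ with $|p_i|=4i$. Consequently $\Omega_q^{String}(pt)\otimes\Q$ is $\Q$ for $q=0$, vanishes for $1\le q\le 7$, and is $\Q$ for $q=8$; moreover $\sigma$ is rationally an isomorphism $\Omega_8^{String}(pt)\otimes\Q\cong\Q$, since a generator of this $\Q$ must be detected by some String characteristic number of degree $8$, and because $p_1=0$ on String manifolds the signature (a nonzero multiple of $p_2[\,\cdot\,]$ there) is, up to scale, the only such number.

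First I would split off the point: for the pointed connected space $P_3$ one has $\Omega_8^{String}(P_3)=\Omega_8^{String}(pt)\oplus\widetilde\Omega_8^{String}(P_3)$ with $\widetilde\Omega_8^{String}(P_3)=\ker\epsilon$, where $\epsilon\colon\Omega_8^{String}(P_3)\to\Omega_8^{String}(pt)$ forgets the map to $P_3$. The signature homomorphism of the paper factors as $\sigma=\sigma_{pt}\circ\epsilon$, so it vanishes on $\widetilde\Omega_8^{String}(P_3)$ and restricts to $\sigma_{pt}$ on the complementary summand. Tensoring $0\to\widehat\Omega_8^{String}(P_3)\to\Omega_8^{String}(P_3)\xrightarrow{\sigma}\Z$ with $\Q$ and using rational injectivity of $\sigma_{pt}$ then yields $\widehat\Omega_8^{String}(P_3)\otimes\Q\cong\widetilde\Omega_8^{String}(P_3)\otimes\Q$.

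Next I would evaluate the right-hand side with the rational reduced Atiyah--Hirzebruch spectral sequence $\widetilde E^2_{p,q}=\widetilde H_p(P_3;\Q)\otimes\bigl(\Omega_q^{String}(pt)\otimes\Q\bigr)\Rightarrow\widetilde\Omega_{p+q}^{String}(P_3)\otimes\Q$. In total degree $8$ only the entries $(p,q)=(8,0)$ and $(0,8)$ can be non-zero, and $\widetilde E^2_{0,8}\cong\widetilde H_0(P_3;\Q)=0$. No differential meets $\widetilde E_{8,0}$: an incoming $d_r$ would originate in $\widetilde E_{8+r,1-r}$, which has negative second index; an outgoing $d_r$ would land in $\widetilde E_{8-r,r-1}$, which can be non-zero only if $\Omega_{r-1}^{String}(pt)\otimes\Q\ne0$, forcing $r\ge9$ and hence $8-r<0$. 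Therefore $\widetilde E^\infty_{8,0}=\widetilde E^2_{8,0}=\widetilde H_8(P_3;\Q)=H_8(P_3;\Q)$ is the unique surviving term in total degree $8$; it is finite-dimensional because $P_3$ has finite type, so there is no convergence subtlety. Hence $\widetilde\Omega_8^{String}(P_3)\otimes\Q\cong H_8(P_3;\Q)$, and combined with the previous paragraph $\widehat\Omega_8^{String}(P_3)\otimes\Q\cong H_8(P_3)$.

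The only genuinely non-formal point is the determination of the rational String coefficients through degree $7$ — above all the vanishing of $\Omega_4^{String}(pt)\otimes\Q$ — together with the non-triviality of $\sigma$ on $\Omega_8^{String}(pt)\otimes\Q$. Granting those facts, the statement is forced by the spectral sequence and the splitting off of the point.
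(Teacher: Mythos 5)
Your argument is correct, but it follows a different route from the paper's. The paper does not run the Atiyah--Hirzebruch spectral sequence directly over $P_3$; instead it uses the fibration $K_3 \to P_3 \to K(\Z^r,2)$ with $K_3 = K(\pi_3(\bigvee^r S^2),3)$, runs Atiyah--Hirzebruch--Leray--Serre spectral sequences for both $\Omega_*^{String}({-})\otimes\Q$ and $H_*$, and compares them via the morphism of spectral sequences induced by the natural transformation; the key external input is that $\Omega_i^{String}(K_3)\otimes\Q \to H_i(K_3)$ is an isomorphism for $i<8$ (quoted from the first author's thesis), with the degree-$8$ discrepancy being exactly the signature copy of $\Q$. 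Your version replaces that fibrewise comparison by the bare rational coefficients $\Omega_*^{String}(pt)\otimes\Q$, computed from $H^*(BString;\Q)\cong\Q[p_2,p_3,\dots]$, whose vanishing in degrees $1$--$7$ collapses the reduced spectral sequence to the single entry $\widetilde H_8(P_3;\Q)$; this is more self-contained and avoids the citation, while the paper's route reuses machinery already set up for $K_3$ in the reference. Both arguments ultimately rest on the same two facts (rational String bordism vanishes in degrees $1$--$7$, and the signature detects degree $8$). One small point worth making explicit in your write-up: the isomorphism you produce is the edge homomorphism, i.e.\ the natural map $[f:M\to P_3]\mapsto H_8(f)([M])$, which is what is actually needed when this proposition is combined with Theorem \ref{thm:eta} to prove Theorem \ref{thm:theta-isom}.
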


\begin{proof}
Let $K_3$ denote the Eilenberg-MacLane space $K(\pi_3(\bigvee^r S^2),3)$. The Postnikov-stage $P_3$ is the total space in a fibration $K_3 \rightarrow P_3 \rightarrow K(\Z^r,2)$, which allows us to compute $\Omega_*^{String}(P_3) \otimes \Q$ and $H_*(P_3)$ using Atiyah--Hirzebruch--Leray--Serre spectral sequences. The natural transformation $\Omega_*^{String}({-}) \otimes \Q \rightarrow H_*$ of homology theories induces a morphism between these spectral sequences. We have $H^*(K_3) \cong \Lambda(H^3(K_3))$, hence $H_i(K_3) \cong (\Lambda(H^3(K_3))_i)^{\vee}$. From \cite[Theorem 3.1.29]{nagy20} we see that the homomorphism $\Omega_i^{String}(K_3) \otimes \Q \rightarrow H_i(K_3)$ is an isomorphism for $i < 8$. Moreover, $\Omega_8^{String}(K_3) \otimes \Q \cong \Omega_8^{String} \otimes \Q \cong \Q$ (detected by the signature) and $H_8(K_3) \cong 0$. Therefore $\Omega_8^{String}(P_3) \otimes \Q \cong H_8(P_3) \oplus \Q$ and $\widehat{\Omega}_8^{String}(P_3) \otimes \Q \cong H_8(P_3)$. 
\end{proof}

By combining Theorem \ref{thm:eta} and Proposition \ref{prop:p3-bord} we get the following: 

\begin{thm} \label{thm:theta-isom}
The map $\Theta_8(\bigvee^r S^2) \otimes \Q \rightarrow H_8(P_3)$, $[M, \varphi] \otimes 1 \mapsto H_8(f)([M])$ is an isomorphism,
where $f : M \rightarrow P_3$ is the map from Definition \ref{def:eta}
and $[M] \in H_8(M)$ is the fundamental class of~$M$.
\qed
\end{thm}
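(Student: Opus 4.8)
The plan is to combine the two ingredients that have just been assembled. Theorem \ref{thm:eta} gives an isomorphism $\eta : \Theta_8(\bigvee^r S^2) \to \widehat{\Omega}_8^{String}(P_3)$ sending $[M,\varphi]$ to the bordism class $[f]$ of the classifying map $f : M \to P_3$ from Definition \ref{def:eta}. Proposition \ref{prop:p3-bord} gives, after tensoring with $\Q$, an isomorphism $\widehat{\Omega}_8^{String}(P_3) \otimes \Q \cong H_8(P_3)$. The composite of these two isomorphisms is therefore an isomorphism $\Theta_8(\bigvee^r S^2) \otimes \Q \to H_8(P_3)$, and the only thing left to check is that it agrees with the map described in the statement, namely $[M,\varphi] \otimes 1 \mapsto H_8(f)([M])$.

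So the substance of the proof is tracking what the isomorphism of Proposition \ref{prop:p3-bord} does on the level of classes. First I would recall that the natural transformation $\Omega_*^{String}(-) \otimes \Q \to H_*(-;\Q)$ sends a bordism class $[g : N \to Z]$ to $H_*(g)([N])$, where $[N]$ is the fundamental class of $N$ (this is the standard description of the AHSS edge map, and is exactly what was used implicitly in the proof of Proposition \ref{prop:p3-bord}). Next I would note that the splitting $\Omega_8^{String}(P_3) \otimes \Q \cong H_8(P_3) \oplus \Q$ obtained there is precisely the splitting by the Hurewicz-type map to $H_8(P_3)$ and the signature $\sigma$; hence on the signature-zero subgroup $\widehat{\Omega}_8^{String}(P_3)$ the isomorphism to $H_8(P_3)$ is nothing but the restriction of $\Omega_8^{String}(P_3) \otimes \Q \to H_8(P_3)$. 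Therefore, chasing $[M,\varphi]$ through $\eta$ and then through this map yields $[f : M \to P_3] \mapsto H_8(f)([M])$, which is exactly the asserted formula.

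The only mild subtlety — and the step I would treat most carefully — is that the natural map $M \to P_3$ in Definition \ref{def:eta} is only defined up to homotopy, and that it carries a String structure; I would check that a choice of String structure on $M$ (which exists and is essentially unique, as noted after Definition \ref{def:theta}) makes $\eta([M,\varphi]) = [f]$ well-defined as a String bordism class, so that feeding it into the natural transformation $\Omega_8^{String}(-) \otimes \Q \to H_8(-;\Q)$ is legitimate, and that $H_8(f)([M])$ does not depend on the homotopy representative of $f$. Both of these are immediate from homotopy invariance of singular homology and of bordism, so no real work is involved; the theorem then follows by composing the cited isomorphisms. \qed
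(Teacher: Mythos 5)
Your proposal is correct and follows exactly the route the paper takes: the paper states this theorem as an immediate consequence of composing the isomorphism $\eta$ of Theorem \ref{thm:eta} with the isomorphism of Proposition \ref{prop:p3-bord}, with no further argument given. Your additional verification that the isomorphism of Proposition \ref{prop:p3-bord} is realised by the edge homomorphism $[g:N\to Z]\mapsto H_*(g)([N])$ (and hence that the composite has the stated formula) is the correct way to fill in the detail the paper leaves implicit.
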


\subsection{Computing \texorpdfstring{$\nmt$}{P} for \texorpdfstring{$\Theta_8(\bigvee^r S^2)$}{Theta\_8}}
\label{subsec:nmt_theta}

In order to understand $\nmt$ for elements of $\Theta_8(\bigvee^r S^2)$, we first
consider it for their third Postnikov stage.

\begin{thm} \label{thm:n-p3}
We have $\ndom_{P_3} = \rfunc(H^2(P_3))$ and the map $\nmt_{P_3} : \ndom_{P_3} = \rfunc(H^2(P_3)) \rightarrow H^8(P_3)$ is an isomorphism.
\end{thm}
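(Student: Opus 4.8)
The plan is to compute everything from an explicit minimal model of $P_3$. Since $P_3$ is simply-connected with $\pi_i(P_3)\otimes\Q=0$ for $i\neq 2,3$, its minimal Sullivan model has the shape
\[
\mathcal{M}\;=\;\bigl(\Lambda(V\oplus W),\,d\bigr),\qquad V\text{ in degree }2,\ W\text{ in degree }3,\ dV=0,\ d(W)\subseteq\pol^2 V,
\]
where I identify the degree-$2$ generators with $V:=H^2(P_3)$ (so $\dim V=r$), and $\pol^2 V$ is the degree-$4$ part of $\Lambda V$, which is the whole space of decomposables of $\mathcal M$ in degree $4$ (there are no generators in degree $4$). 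The map $\bigvee^r S^2\to P_3$ is $4$-connected and $H^3(\bigvee^r S^2)=H^4(\bigvee^r S^2)=0$, hence $H^3(P_3)=H^4(P_3)=0$; reading this off $\mathcal M$ — in which the degree-$3$ cochains are $W$ and the degree-$4$ cochains $\pol^2 V$ are all cocycles — forces $d|_W\colon W\to\pol^2 V$ to be injective (since $H^3=0$) and surjective (since $H^4=0$), hence an isomorphism. In particular $\nmt_{P_3}$ is defined and choice-independent by Theorem~\ref{thm:special}, and (as noted after the definition of $\nmt_X$) can be computed from $\mathcal M$.

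The first assertion is then immediate: since $H^4(P_3)=0$ the cup product $\pol^2 H^2(P_3)\to H^4(P_3)$ is zero, so $\kc_{P_3}=\pol^2 H^2(P_3)=\pol^2 V$ and hence $\ndom_{P_3}=(H^2(P_3)\otimes\Lambda^2\kc_{P_3})\cap\rfunc(H^2(P_3))=\rfunc(H^2(P_3))$. For the second, I would first identify $H^8(P_3)$ from $\mathcal M$. The degree-$8$ part of $\mathcal M$ is $\pol^4 V\oplus(V\otimes\Lambda^2 W)$ and the degree-$7$ part is $W\otimes\pol^2 V$. The differential vanishes on $\pol^4 V$; on $W\otimes\pol^2 V$, via $d(y\otimes p)=(dy)\cdot p$ and the identification $W\cong\pol^2 V$ given by $d$, it is the multiplication $\pol^2 V\otimes\pol^2 V\to\pol^4 V$, hence surjective onto $\pol^4 V$; and on $V\otimes\Lambda^2 W$ the Leibniz rule $d(x\,y_a y_b)=(dy_a)\,x\,y_b-(dy_b)\,x\,y_a$ (for $x\in V$, $y_a,y_b\in W$) exhibits it — after identifying $W$ with $\pol^2 V$, and up to sign and a transposition of the two tensor factors on the target — as the map $\rmap$ of \eqref{eq:rmap}, so its kernel is $\rfunc(V)$. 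Thus the degree-$8$ cocycles are $\pol^4 V\oplus\rfunc(V)$, the coboundaries are $\pol^4 V$, and $H^8(P_3)\cong\rfunc(V)$.

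Finally I would evaluate $\nmt_{P_3}$ using $\mathcal M$ as the DGA. Take $\alpha\colon H^2(P_3)\to\mathcal M^2$ to be the inclusion of $V$ (whose elements are closed and represent $H^2$); then $\alpha^2|_\kc$ is the inclusion $\pol^2 V\hookrightarrow\mathcal M^4$, so one may take $\gamma:=(d|_W)^{-1}\colon\pol^2 V\xrightarrow{\sim}W$. With these choices $\alpha\gamma^2\colon H^2(P_3)\otimes\Lambda^2\pol^2 V\to\mathcal M^8$ is $\mathrm{id}_V\otimes\Lambda^2\bigl((d|_W)^{-1}\bigr)$, an isomorphism onto the summand $V\otimes\Lambda^2 W$ of $\mathcal M^8$; restricted to $\ndom_{P_3}=\rfunc(V)$ it is therefore an isomorphism onto $\ker\bigl(d|_{V\otimes\Lambda^2 W}\bigr)$, which is exactly the $\rfunc(V)$-summand of the degree-$8$ cocycles $\pol^4 V\oplus\rfunc(V)$. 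Composing with the quotient by the coboundaries $\pol^4 V$, an isomorphism on that summand, shows that $\nmt_{P_3}\colon\rfunc(V)=\ndom_{P_3}\to H^8(P_3)$ is an isomorphism.

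The step I expect to be the main obstacle is the first one — pinning down the minimal model, and in particular checking that $d$ carries $W$ isomorphically onto $\pol^2 V$, which is where the $k$-invariant of $\bigvee^r S^2$ (equivalently the vanishing of $H^3$ and $H^4$) does the real work. Everything after that is elementary bookkeeping with the identification $W\cong\pol^2 V$; the only point that genuinely needs care is tracking signs and the transposition of tensor factors when comparing $d$ on $V\otimes\Lambda^2 W$ with $\rmap$.
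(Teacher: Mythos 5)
Your proposal is correct and follows essentially the same route as the paper: establish that the minimal model of $P_3$ is $\Lambda(V^2\oplus V^3)$ with the degree-$3$ differential an isomorphism onto $\pol^2 V^2$ (forced by $H^3(P_3)=H^4(P_3)=0$), then read off $\ndom_{P_3}=\rfunc(H^2(P_3))$, identify $H^8(P_3)$ with $\ker\rmap=\rfunc(V)$ via the explicit degree $7$--$9$ computation, and observe that $\alpha\gamma^2$ is the inclusion of the $V\otimes\Lambda^2 V^3$ summand. All the key checks (surjectivity of $d^7$ onto $\pol^4 V$, identification of $d^8$ with $\rmap$ up to sign and transposition) match the paper's argument.
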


\begin{prop}
The Sullivan minimal model of $P_3$ is $(\Lambda V, d^*)$, where $V = V^2 \oplus V^3$, $V^2 \cong H^2(P_3)$, $d^2 = 0$ and $d^3 : V^3 \rightarrow \pol^2 V^2$ is an isomorphism.
\end{prop}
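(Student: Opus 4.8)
The plan is to invoke the standard structure theory of Sullivan minimal models and then read off the differential from the (essentially trivial) rational cohomology of $\bigvee^r S^2$ in low degrees. Since $P_3$ is simply-connected with finite-type rational cohomology, it has a minimal Sullivan model $(\Lambda V, d)$, unique up to isomorphism, with $V^n \cong (\pi_n(P_3)\otimes\Q)^\vee$ and $d$ having no linear term, i.e.\ $d(V)\subseteq \Lambda^{\geq 2}V$. As $P_3$ is the third Postnikov stage of $\bigvee^r S^2$, we have $\pi_n(P_3)=\pi_n(\bigvee^r S^2)$ for $n\leq 3$ and $\pi_n(P_3)=0$ for $n\geq 4$, so $V$ is concentrated in degrees $2$ and $3$: $V=V^2\oplus V^3$. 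Moreover the map $\bigvee^r S^2\to P_3$ is $4$-connected, hence induces an isomorphism on cohomology in degrees $\leq 3$ and, since $H_4(\bigvee^r S^2)=0$, also $H^4(P_3)=0$; thus $H^*(P_3)$ agrees with $H^*(\bigvee^r S^2)=\Q\oplus\Q^r$ (in degrees $0$ and $2$) through degree $4$.

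First I would pin down $\Lambda V$ in low degrees by counting degrees. Since all generators have degree $2$ or $3$, the only decomposables in total degree $\leq 4$ are products of two degree-$2$ generators, so $(\Lambda V)^3=V^3$ and $(\Lambda V)^4=\Lambda^2 V^2=\pol^2 V^2$ (the symmetric square, because $V^2$ lies in even degree). Minimality forces $d^2=0$ for degree reasons, as $d(V^2)\subseteq(\Lambda^{\geq 2}V)^3=0$, and it shows $d^3$ is a map $V^3\to(\Lambda^{\geq 2}V)^4=\pol^2 V^2$. Also $(\Lambda V)^1=0$ and $(\Lambda V)^2=V^2$, so $H^2(\Lambda V,d)=V^2$; comparing with $H^2(P_3)\cong\Q^r$ gives $V^2\cong H^2(P_3)$.

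Next I would show $d^3$ is an isomorphism, using only the vanishing of $H^3(P_3)$ and $H^4(P_3)$. In degree $3$ the cocycles are exactly $\ker d^3$ and there are no coboundaries (as $d^2=0$), so $H^3(\Lambda V,d)=\ker d^3$; since $H^3(P_3)=0$, the map $d^3$ is injective. In degree $4$ every element of $(\Lambda V)^4=\pol^2 V^2$ is a cocycle (by the Leibniz rule and $d^2=0$, $d$ vanishes on products of degree-$2$ generators) and the coboundaries are $\im d^3$, so $H^4(\Lambda V,d)=\pol^2 V^2/\im d^3$; since $H^4(P_3)=0$, the map $d^3$ is surjective. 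Hence $d^3:V^3\xrightarrow{\ \sim\ }\pol^2 V^2$, as claimed. (As a byproduct this recovers $\dim(\pi_3(\bigvee^r S^2)\otimes\Q)=\dim\pol^2\Q^r=\binom{r+1}{2}$, so no input about $\pi_3$ is needed beyond the cohomology computation.)

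I do not expect a real obstacle here; the only points requiring care are the degree bookkeeping for which wedge powers of $V^2$ and $V^3$ occur in total degrees $2,3,4$, and the identification of $\Lambda^2$ of the even-degree space $V^2$ with its symmetric square $\pol^2 V^2$. Alternatively one can build the model by hand: begin with $(\Lambda V^2,0)$ for $V^2=H^2(P_3)$, note its degree-$4$ cohomology is $\pol^2 V^2$, and adjoin $V^3$ mapping isomorphically onto $\pol^2 V^2$ to kill it; the structure theorem for minimal models of Postnikov stages (generators dual to homotopy) certifies that no further generators are required.
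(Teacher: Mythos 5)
Your proof is correct and follows essentially the same route as the paper: identify $V$ from the rational homotopy groups of the Postnikov stage, compute $(\Lambda V)^{\leq 4}$, and deduce that $d^3$ is an isomorphism from $H^3(P_3)=H^4(P_3)=0$. The only (harmless) variations are that you get $d^2=0$ by degree reasons rather than from the $H^2$ comparison, and you derive the vanishing of $H^3,H^4$ from $4$-connectivity of $\bigvee^r S^2\to P_3$ rather than from the cell structure of the Postnikov stage.
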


\begin{proof}
The Sullivan minimal model satisfies $V^i \cong \pi_i(P_3)^{\vee} \otimes \Q$, therefore $V^2 \cong \pi_2(\bigvee^r S^2)^{\vee} \otimes \Q$, $V^3 \cong \pi_3(\bigvee^r S^2)^{\vee} \otimes \Q$ and $V^i \cong 0$ for $i \neq 2,3$. Since $P_3$ is simply-connected, $H^2(\Lambda V, d^*) \cong H^2(P_3) \cong \pi_2(\bigvee^r S^2)^{\vee} \otimes \Q$, hence $d^2 = 0$. We have $(\Lambda V)^4 \cong \pol^2 V^2$, so $d^4 = 0$, therefore $H^3(\Lambda V, d^*) \cong \Ker d^3$ and $H^4(\Lambda V, d^*) \cong \Coker d^3$. The Postnikov-stage $P_3$ can be constructed from $\bigvee^r S^2$ by adding cells of dimension $5$ and above to kill homotopy groups in dimensions $4$ and above, so $H^3(P_3) \cong H^4(P_3) \cong 0$. This implies that $d^3$ is an isomorphism. 
\end{proof}

\begin{proof}[Proof of Theorem \ref{thm:n-p3}]
We will use the Sullivan minimal model $(\Lambda V, d^*)$ of $P_3$ to compute $\nmt_{P_3}$. Since $H^4(P_3) \cong 0$, we have $E_{P_3} = \pol^2 H^2(P_3)$. This implies that $\ndom_{P_3} = \rfunc(H^2(P_3))$. We have $\clalg^2(\Lambda V, d^*) = V^2 \cong H^2(\Lambda V, d^*)$, so $\alpha = \Id_{V^2}$ is the only section of the projection $\clalg^2(\Lambda V, d^*) \rightarrow H^2(\Lambda V, d^*)$ (where $V^2$ is identified with $H^2(P_3)$). Since $d^4=0$, we have $\clalg^4(\Lambda V, d^*) = (\Lambda V)^4 = \pol^2 V^2 = \pol^2 H^2(P_3)$, and $\alpha^2 = \Id_{\pol^2 H^2(P_3)}$. We also saw that $d^3$ is an isomorphism, so $\gamma = (d^3)^{-1} : E_{P_3} = \pol^2 H^2(P_3) \rightarrow V^3$ is the only map satisfying $\alpha^2 = d^3\gamma$. 

We have 
\[
\begin{aligned}
(\Lambda V)^7 &= \pol^2 V^2 \otimes V^3 \\
(\Lambda V)^8 &= \pol^4 V^2 \oplus (V^2 \otimes \Lambda^2 V^3) \\
(\Lambda V)^9 &= (\pol^3 V^2 \otimes V^3) \oplus \Lambda^3 V^3 
\end{aligned}
\]
The differential $d^7$ is surjective onto $\pol^4 V^2$ (because the multiplication map $\pol^2 V^2 \otimes \pol^2 V^2 \rightarrow \pol^4 V^2$ is surjective), $d^8 \big| _{\pol^4 V^2} = 0$ and $d^8 \big| _{V^2 \otimes \Lambda^2 V^3} : V^2 \otimes \Lambda^2 V^3 \rightarrow \pol^3 V^2 \otimes V^3$ is the map \eqref{eq:rmap} (when $V^3$ is identified with $\pol^2 V^2$ via $d^3$). This shows that $\clalg^8(\Lambda V, d^*) = \pol^4 V^2 \oplus \Ker d^8 \big| _{V^2 \otimes \Lambda^2 V^3}$ and $H^8(P_3) = \clalg^8(\Lambda V, d^*) / \pol^4 V^2 \cong \Ker d^8 \big| _{V^2 \otimes \Lambda^2 V^3} \cong \rfunc(V^2)$. Moreover, $\alpha\gamma^2 : H^2(P_3) \otimes \Lambda^2\pol^2 H^2(P_3) \rightarrow (\Lambda V)^8$ is the inclusion of the second component (using the identifications $V^2 \cong H^2(P_3)$ and $V^3 \cong \pol^2 H^2(P_3)$). Therefore $\nmt_{P_3} : \rfunc(H^2(P_3)) \rightarrow H^8(P_3)$, the map induced by the restriction of $\alpha\gamma^2$, is an isomorphism. 
\end{proof}

Since $H_8(P_3) \cong H^8(P_3)^{\vee}$, we immediately get the following:

\begin{cor} \label{cor:n-dual}
The map $H_8(P_3) \rightarrow \rfunc(H^2(P_3))^{\vee}$ given by $x \mapsto \left< x, \cdot \right> \circ \nmt_{P_3} : \rfunc(H^2(P_3)) \rightarrow \Q$ is an isomorphism.
\qed
\end{cor}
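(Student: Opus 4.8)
The plan is to deduce the statement immediately from Theorem \ref{thm:n-p3} by dualising. First I would record that $P_3$ has finite-dimensional rational cohomology in every degree: its minimal model $(\Lambda V, d^*)$ is freely generated by the finite-dimensional spaces $V^2 \cong H^2(P_3)$ and $V^3$, so each $(\Lambda V)^i$, and hence each $H^i(P_3)$, is finite-dimensional. In particular the universal-coefficient pairing
\[
H_8(P_3) \xra{\ \sim\ } H^8(P_3)^{\vee}
\]
is an isomorphism.

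Next I would observe that the map in the statement is, by definition of the transpose, exactly the composite
\[
H_8(P_3) \xra{\ \sim\ } H^8(P_3)^{\vee} \xra{\ (\nmt_{P_3})^{\vee}\ } \rfunc(H^2(P_3))^{\vee},
\]
where the first arrow is the duality isomorphism above and the second is the transpose of $\nmt_{P_3} : \rfunc(H^2(P_3)) \to H^8(P_3)$; indeed, unwinding the definitions, an element $x \in H_8(P_3)$ is sent to the functional $\xi \mapsto \langle x, \nmt_{P_3}(\xi)\rangle$ on $\rfunc(H^2(P_3))$. By Theorem \ref{thm:n-p3} the map $\nmt_{P_3}$ is an isomorphism, so its transpose is an isomorphism between the (finite-dimensional) dual spaces, and therefore the composite is an isomorphism. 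There is no real obstacle: the only point requiring care is the purely formal verification that $x \mapsto \langle x, \cdot\rangle \circ \nmt_{P_3}$ coincides with the displayed composite, which is immediate.
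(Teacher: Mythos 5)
Your proof is correct and is essentially the paper's own argument: the paper derives the corollary immediately from Theorem \ref{thm:n-p3} via the duality $H_8(P_3) \cong H^8(P_3)^{\vee}$, exactly as you do. The only difference is that you spell out the finite-dimensionality justification and the identification of the map with the transpose composite, which the paper leaves implicit.
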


Now we can prove the main result of this section:

\begin{thm}
The map $\Theta_8(\bigvee^r S^2) \otimes \Q \rightarrow \rfunc((\Q^r)^{\vee})^{\vee}$ given by $[M, \varphi] \otimes 1 \mapsto ((\varphi_{\Q}^{\vee})^{-1})^*(\nmtb(M))$ is an isomorphism, where $\nmtb(M) \in \ndom_M^{\vee} = \rfunc(H^2(M))^{\vee}$ is the canonical element defined in Definition \ref{def:nbar} and $((\varphi_{\Q}^{\vee})^{-1})^* : \rfunc(H^2(M))^{\vee} \rightarrow \rfunc((\Q^r)^{\vee})^{\vee}$ is the isomorphism induced by the inverse of $\varphi_{\Q}^{\vee} : H^2(M) \cong H_2(M)^{\vee} \rightarrow (\Q^r)^{\vee}$, which is the dual of $\varphi_{\Q} = \varphi \otimes \Id_{\Q} : \Q^r \rightarrow H_2(M)$.
\end{thm}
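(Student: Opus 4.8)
The plan is to reduce the final statement to the combination of Theorem~\ref{thm:theta-isom}, Corollary~\ref{cor:n-dual}, and the naturality of $\nmt$ established in Proposition~\ref{prop:nat2}. Concretely, for $[M,\varphi] \in \Theta_8(\bigvee^r S^2)$ let $f : M \to P_3$ be the map of Definition~\ref{def:eta}. By definition of $\eta$ and the construction of $P_3$ as a Postnikov stage of $\bigvee^r S^2$, the map $H^2(f) : H^2(P_3) \to H^2(M)$ is an isomorphism, and under the identifications $H^2(P_3) \cong (\Q^r)^\vee \cong H^2(M)$ built into the definitions it is (the dual of) $\varphi_\Q$. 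Since $H^3(M) \cong H^3(P_3) \cong 0$, both $\nmt_M$ and $\nmt_{P_3}$ are unambiguously defined by Theorem~\ref{thm:special}, and Proposition~\ref{prop:nat2} (applied to $f$) gives a commutative square relating $\nmt_M$, $\nmt_{P_3}$, $\rfunc(H^2(f))$ and $H^8(f)$ — with no choice ambiguity precisely because $H^3$ vanishes.

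Next I would translate that square into the dual statement about fundamental classes. Dualising the commutative square and pairing with $[M]$, one gets $\nmtb(M) = \rfunc(H^2(f))^\vee \bigl(\langle H_8(f)([M]), \cdot\rangle \circ \nmt_{P_3}\bigr)$, i.e. $\nmtb(M) \in \rfunc(H^2(M))^\vee$ is the pullback along $\rfunc(H^2(f))$ of the element of $\rfunc(H^2(P_3))^\vee$ that Corollary~\ref{cor:n-dual} assigns to $H_8(f)([M]) \in H_8(P_3)$. Composing with the identification $\rfunc(H^2(M))^\vee \cong \rfunc((\Q^r)^\vee)^\vee$ induced by $(\varphi_\Q^\vee)^{-1}$, and noting that $H^2(f)$ agrees with $\varphi_\Q^\vee$ up to the canonical identifications, the two change-of-basis maps cancel and the composite $[M,\varphi]\otimes 1 \mapsto ((\varphi_\Q^\vee)^{-1})^*(\nmtb(M))$ is identified with the composite
\[
\Theta_8(\textstyle\bigvee^r S^2) \otimes \Q \xrightarrow{\text{Thm \ref{thm:theta-isom}}} H_8(P_3) \xrightarrow{\text{Cor \ref{cor:n-dual}}} \rfunc(H^2(P_3))^\vee \cong \rfunc((\Q^r)^\vee)^\vee \text{\,.}
\]
The first arrow is an isomorphism by Theorem~\ref{thm:theta-isom}, the second by Corollary~\ref{cor:n-dual}, and the last identification is an isomorphism of vector spaces, so the whole map is an isomorphism, which is exactly the claim.

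The main obstacle, and the step requiring the most care, is bookkeeping the various identifications so that "$H^2(f)$ equals $\varphi_\Q^\vee$" is actually true on the nose rather than merely up to an unspecified isomorphism — this is what makes the two base-change twists cancel. One has to unwind Definition~\ref{def:eta}: $f$ is the natural map $M \to P_3(M)$ followed by a homotopy inverse of $P_3(\bar\varphi)$, so $H^2(f)$ is $H^2(\bar\varphi)$ precomposed with the isomorphism $H^2(P_3(M)) \cong H^2(M)$, and $H^2(\bar\varphi) = H_2(\bar\varphi)^\vee = \varphi^\vee$ by the defining property of $\bar\varphi$; tensoring with $\Q$ gives $\varphi_\Q^\vee$. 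Once this is checked, everything else is formal: applying $\nmtb$, dualising Proposition~\ref{prop:nat2}, and citing Theorem~\ref{thm:theta-isom} and Corollary~\ref{cor:n-dual}. I would also remark that the hypothesis $H_4(M;\Z) \cong 0$ in Definition~\ref{def:theta} ensures $\ndom_M = \rfunc(H^2(M))$ (as $\kc_M = \pol^2 H^2(M)$), matching the statement's identification $\ndom_M^\vee = \rfunc(H^2(M))^\vee$.
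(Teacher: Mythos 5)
Your proposal is correct and follows essentially the same route as the paper's own proof: both identify the map as the composite of the isomorphisms of Theorem~\ref{thm:theta-isom} and Corollary~\ref{cor:n-dual} via the naturality of $\nmt$ under $f : M \to P_3$ (Proposition~\ref{prop:nat2}) and the compatibility $H_2(f;\Z)\circ\varphi = \mathrm{id}$ from Definition~\ref{def:eta}. Your extra bookkeeping of the identifications and the remark that $H_4(M;\Z)\cong 0$ gives $\ndom_M = \rfunc(H^2(M))$ are consistent with, and slightly more explicit than, the paper's argument.
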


As the canonical element $\nmtb(M)$ is determined by $\nmt_M$ and $((\varphi_{\Q}^{\vee})^{-1})^*$ is just the identification of $\rfunc(H^2(M))^{\vee}$ with the standard space $\rfunc((\Q^r)^{\vee})^{\vee}$ coming from $\varphi$, this means that $\nmt$ is a complete invariant of $\Theta_8(\bigvee^r S^2) \otimes \Q$.

\begin{proof}
We will show that this map is the composition of the isomorphisms in Theorem \ref{thm:theta-isom} and Corollary \ref{cor:n-dual} (if $H^2(P_3)$ is identified with $(\Q^r)^{\vee}$). The composition maps $[M, \varphi] \otimes 1$ to $\left< H_8(f)([M]), \cdot \right> \circ \nmt_{P_3}$. By the naturality of the cap product and of $\nmt$ (see Proposition \ref{prop:nat2}) we have 
\[
\left< H_8(f)([M]), \cdot \right> \circ \nmt_{P_3} = \left< [M], \cdot \right> \circ H^8(f) \circ \nmt_{P_3} = \left< [M], \cdot \right> \circ \nmt_M \circ \rfunc(H^2(f)) = \nmtb(M) \circ \rfunc(H^2(f)) \text{\,.} 
\]
Since $H_2(f;\Z) \circ \varphi$ is the fixed identification $\Z^r \cong H_2(P_3;\Z)$ (see Definition \ref{def:eta}), this element is identified with $\nmtb(M) \circ \rfunc((\varphi_{\Q}^{\vee})^{-1}) = ((\varphi_{\Q}^{\vee})^{-1})^*(\nmtb(M))$ when $H^2(P_3)$ is identified with $(\Q^r)^{\vee}$.
\end{proof}

\subsection{Fourfold Massey products on 8-dimensional E-manifolds}
\label{subsec:fourfold}

We now consider the relation of $\nmt$ to fourfold Massey products.
Recall that more generally, for $x_1, x_2, x_3, x_4 \in H^*(X)$
the fourfold Massey product is defined if $x_1x_2 = x_2x_3 = x_3x_4 = 0$
and the triple products $\gen{x_1, x_2, x_3}$ and $\gen{x_2, x_3, x_4}$ vanish
simultaneously, in the sense that it  
is possible to choose representatives $\alpha_i \in \calz^*$ and prederivatives
$\gamma_{12}$, $\gamma_{23}$, $\gamma_{34} \in \dga^*$ for $\alpha_1\alpha_2$,
$\alpha_2\alpha_3$ and $\alpha_3\alpha_4$ such that
$\alpha_1\gamma_{23} + \gamma_{12}\alpha_3$ and
$\alpha_2\gamma_{34} + \gamma_{23}\alpha_4$ are both exact. 
Then the fourfold product $\gen{x_1, x_2, x_3, x_4}$ is represented by
\begin{equation}
[\alpha_1\sigma_2 + \gamma_{12}\gamma_{34} + \sigma_1 \alpha_4] \in H^*(X)
\end{equation}
whenever
\[ d\sigma_1 = \alpha_1\gamma_{23} + \gamma_{12}\alpha_3, \qquad
d\sigma_2 = \alpha_2\gamma_{34} + \gamma_{23}\alpha_4 . \]
If $x_5 \in H^*$ such that also $x_4x_5 = x_5x_1 = 0$, then $\gen{x_1, x_2, x_3, x_4}x_5$ is closely related to the evaluation of $\nmt$ on certain elements of $\ndom$. Recall from Definition \ref{def:penta} that the multilinear map $\cmap : H^2(X)^{\times 5} \rightarrow \rfunc(H^2(X)) \leq H^2(X) \otimes \Lambda^2 \pol^2 H^2(X)$ is given by the formula
\begin{equation}
\label{eq:ordinary}
\cmap(x_1, x_2, x_3, x_4, x_5) = \sum_{\text{cyc}} x_1 \otimes (x_2x_3 \wedge x_4x_5) \text{\,.}
\end{equation}

\begin{defn}
\label{def:ordinary}
We will call an element of $\ndom_X \leq \rfunc(H^2(X))$ \emph{ordinary} if it can be obtained as $\cmap(x_1, x_2, x_3, x_4, x_5)$ for some $x_1, x_2, x_3, x_4, x_5 \in H^2(X)$ with $x_1x_2 = x_2x_3 = x_3x_4 = x_4x_5 = x_5x_1 = 0 \in H^4(X)$.
\end{defn}

In general, the relation between the fourfold Massey products and the
evaluation of $\nmt$ on ordinary elements of $\ndom$ is complicated by the
dependence of both objects, and in particular the fourfold Massey product, on
choices.
We are therefore content to restrict attention to the context of 8-dimensional
E-manifolds $M$. Then for $x_1, x_2, x_3, x_4 \in H^2(M)$, the only choices
in the definition of $\gen{x_1, x_2, x_3, x_4} \in H^6(M)$ that affect the
result are those of $\sigma_1$ and $\sigma_2 \in \dga^4$.
Adding a closed form to either of
those has the effect of changing $\gen{x_1, x_2, x_3, x_4}$ by
an element of $x_1H^4(M) + x_4H^4(M)$. In particular, if $x_5 \in H^2(M)$
has $x_5x_1 = x_4x_5 = 0$, then
\[ \gen{x_1, x_2, x_3, x_4}x_5 \in H^8(M) \cong \Q \]
is independent of all choices.
Moreover, by Poincar\'e duality the set of possible values of $\gen{x_1, x_2, x_3, x_4}$ is completely
determined by knowing the values of $\gen{x_1, x_2, x_3, x_4}x_5 \in H^8(M)$
for all such $x_5$. Those are determined by evaluating $\nmt$ on the
ordinary elements: 

\begin{prop}
For any space $X$ with $H^3 = H^5 = 0$ and any $x_1, x_2, x_3, x_4, x_5 \in H^2(X)$ such that $x_ix_{i+1} = 0 \in H^4(X)$,
\[ \gen{x_1,x_2,x_3,x_4}x_5 = \nmt\left(\cmap(x_1, x_2, x_3, x_4, x_5)\right)
\in H^8(X) . \]
\end{prop}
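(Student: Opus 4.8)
The plan is to produce, for a given ordinary element $\cmap(x_1,\dots,x_5)$, an explicit cochain-level representative that simultaneously exhibits both $\gen{x_1,x_2,x_3,x_4}x_5$ and $\nmt(\cmap(x_1,\dots,x_5))$, and then to match them term by term. First I would unwind the definition of $\nmt$ applied to $\cmap(x_1,\dots,x_5)$. Pick a section $\alpha:H^2\to\clalg^2$ with $\alpha(x_i)=\alpha_i$, and a prederivative $\gamma:\kc\to\dga^3$ with $d\gamma=\alpha^2_{|\kc}$; since each $x_ix_{i+1}=0$ we have $x_ix_{i+1}\in\kc$, so $\gamma_{i,i+1}:=\gamma(x_ix_{i+1})$ satisfies $d\gamma_{i,i+1}=\alpha_i\alpha_{i+1}$ (note $\alpha^2$ versus $\alpha\beta$ bookkeeping: $\alpha\alpha=2\alpha^2$, so I must be careful with the factor of $2$ throughout). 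Then $\nmt(\cmap(x_1,\dots,x_5))$ is the class of $\sum_{\text{cyc}}\alpha_1\,\gamma(x_2x_3)\,\gamma(x_4x_5)=\sum_{\text{cyc}}\alpha_1\gamma_{23}\gamma_{45}$ (up to the universal scalar coming from the symmetrisation conventions).

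Next I would unwind the fourfold Massey product side. Using the same $\alpha_i$ and $\gamma_{i,i+1}$, the quantities $\alpha_1\gamma_{23}+\gamma_{12}\alpha_3$ and $\alpha_2\gamma_{34}+\gamma_{23}\alpha_4$ are closed of degree $5$, hence exact because $H^5(X)=0$; choose $\sigma_1,\sigma_2\in\dga^4$ as in the definition. Then $\gen{x_1,x_2,x_3,x_4}$ is represented by $\alpha_1\sigma_2+\gamma_{12}\gamma_{34}+\sigma_1\alpha_4\in\dga^6$, and multiplying by a representative $\alpha_5$ of $x_5$ gives a degree-$8$ cocycle representing $\gen{x_1,x_2,x_3,x_4}x_5$. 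The task is now to show
\[
[\,(\alpha_1\sigma_2+\gamma_{12}\gamma_{34}+\sigma_1\alpha_4)\,\alpha_5\,]
=\Bigl[\sum_{\text{cyc}}\alpha_1\gamma_{23}\gamma_{45}\Bigr]\in H^8(X).
\]
The bridge between the two sides is the observation that $x_5x_1=x_4x_5=0$ means $\gamma_{51}:=\gamma(x_5x_1)$ and $\gamma_{45}:=\gamma(x_4x_5)$ are available with $d\gamma_{51}=\alpha_5\alpha_1$, $d\gamma_{45}=\alpha_4\alpha_5$. I would then write $\sigma_1\alpha_4\alpha_5=\sigma_1\,d\gamma_{45}=d(\sigma_1\gamma_{45})\mp(d\sigma_1)\gamma_{45}=d(\sigma_1\gamma_{45})\mp(\alpha_1\gamma_{23}+\gamma_{12}\alpha_3)\gamma_{45}$, and similarly reprocess $\alpha_1\sigma_2\alpha_5$ using $\alpha_5\alpha_1=d\gamma_{51}$ to get $\alpha_1\sigma_2\alpha_5=-\alpha_1\sigma_2\,d\gamma_{51}\cdot(\pm)=\cdots$. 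After moving all these coboundary terms aside, what remains should be, up to the scalar, $\sum_{\text{cyc}}\alpha_1\gamma_{23}\gamma_{45}$, once one also uses that $\gamma_{12}\gamma_{34}\alpha_5$ can be rewritten via $d\gamma_{45}$ or absorbed against the cross terms produced above.

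The main obstacle I anticipate is purely the sign- and coefficient-bookkeeping in this cochain manipulation: the paper's conventions distinguish $\alpha\beta$ (a symmetrised product, with $\alpha\alpha=2\alpha^2$) from $\alpha^2$, the wedge products $\gamma\wedge\delta$ on $\Lambda^2\kc$ carry their own $(-1)^{1+rs}$ signs, and $\cmap$ involves a cyclic sum of five terms while the fourfold product is written asymmetrically in the four entries; reconciling the ``$\sum_{\text{cyc}}$'' structure on the $\nmt$ side with the ``head/tail'' structure $\alpha_1\sigma_2+\gamma_{12}\gamma_{34}+\sigma_1\alpha_4$ on the Massey side will require choosing the $\sigma_i$ compatibly and carefully tracking which of the five cup-product-vanishing conditions is used where. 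I would organise this by fixing once and for all a Koszul sign convention, computing $d$ of the candidate difference cochain and checking it is zero (which it must be, since both sides are cocycles), and then exhibiting the difference as $d$ of an explicit degree-$7$ element built from $\sigma_1\gamma_{45}$, $\sigma_2\gamma_{51}$, $\gamma_{12}\gamma_{34}$-type products and $\alpha_i\gamma_{jk}\gamma_{lm}$ terms — the hypothesis $H^3(X)=0$ guarantees there is no ambiguity in the $\gamma$'s themselves so that no extra correction terms of the kind appearing in the proof of Theorem~\ref{thm:special} are needed. Once the difference is visibly exact, the identity of cohomology classes follows, completing the proof.
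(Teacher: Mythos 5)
Your proposal is correct and follows essentially the same route as the paper's proof: align the Massey-product cochain data with the $(\alpha,\gamma)$ used to define $\nmt$, multiply by $\alpha_5$, convert $\alpha_5\alpha_1$ and $\alpha_4\alpha_5$ into $d\gamma_{51}$ and $d\gamma_{45}$, integrate by parts, and substitute $d\sigma_i$ to recover $\sum_{\text{cyc}}\alpha_1\gamma_{23}\gamma_{45}$. The one worry you raise about choosing the $\sigma_i$ ``compatibly'' is unnecessary: after the integration by parts the $\sigma_i$ appear only through $d\sigma_i$, so arbitrary choices suffice (as the paper notes, the left-hand side is independent of them since the ambiguity lies in $x_1H^4+x_4H^4$, which is killed by multiplication with $x_5$).
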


\begin{proof}
Since the LHS is independent of the choices, it suffices to prove the claim
if we pick  $\alpha : H^2 \to \calz^2$
and $\gamma : \kc \to \dga^3$ as in the definition of $\nmt$ and
take $\alpha_i = \alpha(x_i)$, $\gamma_{i,i+1} = \gamma(x_i x_{i+1})$
in the definition of $\gen{x_1, x_2, x_3, x_4}$ (while choosing $\sigma_1, \sigma_2$ arbitrarily).
Now
\begin{align*}
[\alpha_1\sigma_2 + \gamma_{12}\gamma_{34} + \sigma_1 \alpha_4]x_5
&=
[(d\gamma_{51})\sigma_2 + \gamma_{12}\gamma_{34}\alpha_5 + \sigma_1d\gamma_{45}]
\\ &=
[-\gamma_{51}d\sigma_2 + \gamma_{12}\gamma_{34}\alpha_5 + (d\sigma_1)\gamma_{45}]
\\ &=
[(\alpha_2\gamma_{34} + \gamma_{23}\alpha_4)\gamma_{51}
+ \gamma_{12}\gamma_{34}\alpha_5 + 
(\alpha_1\gamma_{23} + \gamma_{12}\alpha_3)\gamma_{45}]
\\ &= \left[ \sum_{\text{cyc}}\alpha_1\gamma_{23}\gamma_{45} \right]
\end{align*}
which equals $\nmt$ evaluated on \eqref{eq:ordinary}.
\end{proof}

In summary, for any 8-dimensional E-manifold the fourfold Massey products
are completely determined by $\nmt$. The reverse holds too if $\ndom$ is
generated by ordinary elements, which is in fact the case for
elements of $\Theta_8(\bigvee^r S^2)$. More generally, if the cup product
$H^2 \times H^2 \to H^4$ is trivial, then $\ndom$ is simply $\rfunc(H^2)$ and
every element in the image of $\cmap$ is ordinary in the sense of
Definition \ref{def:ordinary}, so the next lemma implies that $\nmt$ is
determined by the fourfold Massey products.

\begin{lem}
\label{lem:ord}
For any finite-dimensional vector space $V$, $\rfunc(V)$
is generated by the image of $\cmap : V^{\times 5} \to \rfunc(V)$.
\end{lem}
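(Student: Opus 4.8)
The plan is to exploit the irreducibility of $\rfunc(V)$ as a $GL(V)$-representation, which is part of Lemma~\ref{lem:ndom}. First I would note that $\cmap$ is $GL(V)$-equivariant: $GL(V)$ acts functorially on $V$, on $\pol^2 V$ (via $g\cdot xy = (gx)(gy)$), hence on $\Lambda^2\pol^2 V$, and diagonally on $V\otimes\Lambda^2\pol^2 V$, so the cyclic-sum formula of Definition~\ref{def:penta} gives $\cmap(gx_1,\dots,gx_5)=g\cdot\cmap(x_1,\dots,x_5)$. Consequently the image of $\cmap$ is a $GL(V)$-stable subset of $\rfunc(V)$, so its $\Q$-linear span $W$ is a subrepresentation. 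By Lemma~\ref{lem:ndom}(ii), $\rfunc(V)$ is irreducible, so $W$ is either $0$ or all of $\rfunc(V)$; thus it suffices to exhibit one quintuple on which $\cmap$ does not vanish.

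If $\dim V\leq 2$ then $\rfunc(V)=0$ by Lemma~\ref{lem:ndom}(i) and there is nothing to prove, so I would assume $\dim V\geq 3$ and fix linearly independent vectors $a,b,c\in V$. The plan is then to compute $\cmap(a,a,b,b,c)$ directly from the cyclic sum: the five terms are $a\otimes(ab\wedge bc)$, $a\otimes(b^2\wedge ac)$, $b\otimes(bc\wedge a^2)$, $b\otimes(ac\wedge ab)$ and $c\otimes(a^2\wedge b^2)$, so
\[
\cmap(a,a,b,b,c) = a\otimes(ab\wedge bc + b^2\wedge ac) + b\otimes(bc\wedge a^2 + ac\wedge ab) + c\otimes(a^2\wedge b^2).
\]
Since $a,b,c$ are linearly independent, $a^2,b^2,ab,ac,bc$ are linearly independent in $\pol^2 V$ (part of a basis), so each of the three bracketed elements is a nonzero element of $\Lambda^2\pol^2 V$ (in each case a sum of one or two distinct basis bivectors $m\wedge m'$, with no cancellation possible). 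As the three summands lie in the distinct direct summands $\langle a\rangle\otimes\Lambda^2\pol^2 V$, $\langle b\rangle\otimes\Lambda^2\pol^2 V$ and $\langle c\rangle\otimes\Lambda^2\pol^2 V$ of $V\otimes\Lambda^2\pol^2 V$, their sum is nonzero. Hence $\cmap(a,a,b,b,c)\neq 0$, forcing $W=\rfunc(V)$.

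There is no genuinely hard step here once Lemma~\ref{lem:ndom}(ii) is available; the only mild nuisance is picking a quintuple on which $\cmap$ is nonzero, since many of the most symmetric candidates (for instance $(a,a,a,a,b)$, $(a,b,b,b,b)$, or $(a,b,c,a,b)$) evaluate to $0$, so one is forced to use three independent vectors — which is also exactly why the statement has content only when $\dim V\geq 3$. One could instead bypass representation theory and prove the spanning statement by a direct basis computation on $\rfunc(V)$, but that is more laborious and less transparent than the irreducibility argument above.
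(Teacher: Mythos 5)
Your proof is correct and follows essentially the same route as the paper: use the $GL(V)$-equivariance of $\cmap$ together with the irreducibility of $\rfunc(V)$ from Lemma~\ref{lem:ndom}(ii) to reduce to showing $\cmap$ is not identically zero. The only difference is that the paper simply asserts non-vanishing (phrasing the conclusion via Schur's lemma applied to the induced map $V^{\otimes 5}\to\rfunc(V)$), whereas you supply the explicit, correctly computed witness $\cmap(a,a,b,b,c)\neq 0$ — a worthwhile addition, since as you note many symmetric choices do vanish.
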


\begin{proof}
This amounts to proving surjectivity of the linear map
$V^{\otimes 5} \to \rfunc(V)$ induced by $\cmap$.
Since that is a non-zero $GL(V)$-equivariant map and $\rfunc(V)$ is
irreducible by Lemma \ref{lem:ndom}, it is surjective by Schur's lemma.
\end{proof}

\section{General definition of \texorpdfstring{$\nmt$}{P}}
\label{sec:general}

The definition and proof of well-definedness of $\nmt$ in general works
much the same as in the 8-dimensional case, once we have set up the appropriate
notation.
However, in the discussion of the 8-dimensional case we used the
hypothesis $H^3 = 0$ to completely eliminate dependence of $\nmt$
on choices of cochains. Without such a simplifying hypothesis we will need
to describe a transformation rule for how $\nmt$ depends on the choices.
The transformation rule is described in terms of uniform triple Massey
products, and we will therefore also need to describe those.

\subsection{Products of algebra-valued maps}

For a graded algebra $\dga$, graded vector spaces $V$ and~$W$, and linear maps
$\alpha : V \to \dga$ and $\beta : W \to \dga$, we can define an obvious
product $\alpha \beta : V \otimes W \to \dga$.
But we also wish to introduce some notions of products of graded symmetric
and antisymmetric maps.

For a graded vector space $V$, let $\gsym^p V$ and $\agsym^p V$ denote the
quotients of $V^{\otimes p}$ by relations of graded symmetry and graded
antisymmetry respectively.
Thus a linear map $\alpha : \gsym^p V \to \dga$ is essentially the same
as a multilinear map $\alpha : V^p \to \dga$ such that
\[ \alpha(x_{\sigma(1)}, \ldots, x_{\sigma(p)})
= (-1)^{\sum d_i d_j}
\alpha(x_1, \ldots, x_p) \]
for any permutation $\sigma$ and any $x_1, \ldots, x_p \in V$ of degrees
$d_1, \ldots, d_p$,
where the sum is taken over all $i < j$ such that $\sigma(i) > \sigma(j)$.
Meanwhile a linear map $\alpha : \agsym^p V \to \dga$ is equivalent to an
$\alpha : V^p \to \dga$ such that
\[ \alpha(x_{\sigma(1)}, \ldots, x_{\sigma(p)})
= (\sgn \sigma)(-1)^{\sum d_i d_j} \alpha(x_1, \ldots, x_p)
= (-1)^{\sum (1+d_i d_j)} \alpha(x_1, \ldots, x_p) . \]
Given $\alpha : \gsym^p V \to \dga$ and $\beta : \gsym^q V \to \dga$,
of degree $r$ and $s$ respectively (\ie $\alpha$ maps the degree $d$ part of
the graded vector space $\gsym^p V$ to $\dga^{d+r}$), we define
a product $\alpha\beta : \gsym^{p+q}V \to \dga$ (of degree $r+s$) as follows.
\[ (\alpha\beta)(x_1, \ldots, x_{p+q}) =
\frac{1}{p!q!}\sum_{\sigma \in S_{p+q}}(-1)^{s(d_1 + \cdots d_p) + \sum d_id_j}
\alpha(x_{\sigma(1)}, \ldots, x_{\sigma(p)})
\beta(x_{\sigma(p+1)}, \ldots, x_{\sigma(p+q)})
\]
\begin{rmk*}
The notation is a bit ambiguous in that $\alpha\beta$
could be interpreted as being defined on $\gsym^pV \otimes \gsym^qV$
or $\gsym^{p+q}V$, or also on $\gsym^2 \gsym^p V$ if $p = q$.
Rather than introducing more cumbersome notation for the products themselves,
we aim to disambiguate by specifying the domains clearly.
\end{rmk*}

The symmetric product satisfies convenient versions of associativity, graded
commutativity and (if $\dga$ is a DGA) the Leibniz rule:
\begin{align*}
\alpha(\beta\gamma) &= (\alpha\beta)\gamma : \gsym^{p+q+r}V \to \dga \\
\alpha\beta &= (-1)^{rs}\beta\alpha : \gsym^{p+q}V \to \dga\\
d(\alpha\beta) &= (d\alpha)\beta + (-1)^r \alpha(d\beta) : \gsym^{p+q}V \to \dga
\end{align*}
For $\alpha : \agsym^p V \to \dga$ and $\beta : \agsym^q V \to \dga$,
we define $\alpha \wedge \beta : \agsym^{p+q}V \to \dga$ analogously, \ie
\begin{align*}
&(\alpha \wedge \beta)(x_1, \ldots, x_{p+q}) \\
=&
\frac{1}{p!q!}\sum_{\sigma \in S_{p+q}}(\sgn \sigma)(-1)^{s(d_1 + \cdots d_p) + \sum d_id_j}
\alpha(x_{\sigma(1)}, \ldots, x_{\sigma(p)})
\beta(x_{\sigma(p+1)}, \ldots, x_{\sigma(p+q)}) .
\end{align*}
This product enjoys the same associativity property and
Leibniz rule as the graded symmetric one, and it is bigraded commutative in the
sense that
\[ \alpha\wedge\beta = (-1)^{pq+rs}\beta\wedge\alpha : \gsym^{p+q}V \to \dga .\]
Finally, given $\alpha : V \to \dga$ of degree $r$, the map
$(x_1, \ldots, x_p) \mapsto \alpha(x_1)\cdots\alpha(x_p)$
is itself graded symmetric or antisymmetric, depending on whether $r$
is even or odd.
We denote the resulting map $\gsym^pV \to \dga$ or $\agsym^pV \to \dga$
by $\alpha^p$. Note that with this convention \eg $6\alpha^3$ is
$\alpha\alpha\alpha$ or $\alpha \wedge \alpha \wedge \alpha$; 
this convention is the right one if one wants the option to apply the set-up
to free abelian groups rather than vector spaces.

\subsection{Uniform triple products and the \bmp}
\label{subsec:triples}

The dependence of $\nmt$ on choices will be expressed in terms of the
uniform triple products in the sense of \cite[\S2.3]{bmp}, which we now recall.

Let $\dga$ be a DGA with homology $H^*$. As before, $\clalg^* \subseteq \dga$ denotes the subalgebra of closed elements. Let $E^* \subseteq \gsym^2 H^*$ denote the kernel of the product map $\gsym^2 H^* \rightarrow H^*$.

\begin{defn}
\label{def:choice}
A \emph{cochain choice} is a pair
$c = (\alpha,\gamma)$ where $\alpha : H^* \to \clalg^*$ is a right inverse for
the projection to cohomology, and $\gamma : E^* \to \dga^{*-1}$ satisfies
$d\gamma = \alpha^2_{|E^*}$.
\end{defn}

Given two choices $c$ and $c'$, pick $\beta : H^* \to \dga^{*-1}$
such that $d\beta = \alpha' - \alpha$.
Then $\gamma' - \gamma - \beta(\alpha+\half d\beta)$
maps $\kc^* \to \clalg^{*-1}$, so induces a map
$\delta(c',c) : \kc^* \to H^{*-1}$.

This $\delta$ can in turn depend on the choice of $\beta$.
If we set $\beta' = \beta + \eta$ for some $\eta : H^* \to \clalg^{*-1}$,
then $\beta'(\alpha+\half d\beta') - \beta(\alpha+\half d\beta)
= \half\eta(\alpha+\alpha')$, so induces the map
$[\eta]\idf : \gsym^2H^* \to H^{*-1}$.
But in any case, if we let
\begin{itemize}
\item $L_1 = \hom(H^*, H^{*-1})$, the space of degree $-1$ maps $H^* \to H^{*-1}$
\item $L_2 = \hom(\kc^*, H^{*-1})$
\end{itemize}
and $\idf L_1 \subseteq L_2$ the subspace of maps $\kc^* \to H^{*-1}$ that are
restrictions of the symmetric product
$\idf \, n : {\gsym^2 H^* \rightarrow H^{*-1}}$ of $\idf : H^* \to H^*$ and some
$n : H^* \to H^{*-1}$,
then $\delta$ is well-defined in $L_2/\idf L_1$.

\begin{defn}
\label{def:choices}
Say that two choices $c$ and $c'$ are \emph{equivalent} if
$\delta(c',c) = 0 \in L_2/\idf L_1$. The set of equivalence classes $\calc$ then
forms an affine vector space modelled on $L_2/\idf L_1$.
\end{defn}

In particular, if $c$ and $c'$ are two cochain choices, and their equivalence classes in $\calc$ are also denoted by $c$ and $c'$, then the element $c'-c \in L_2/\idf L_1$ is represented by $\delta(c',c) \in L_2$ (with any choice of $\beta$ used in the definition of $\delta(c',c)$).

Now let $\tripsp$ denote the kernel of the full graded symmetrisation map
\begin{equation}
\label{eq:fullsym}
s : H^* \otimes \kc^* \to \gsym^3H^* .
\end{equation}

\begin{defn}
Given a choice $c$, we define the \emph{uniform triple product}
to be the linear map
\[ \trip_c : \tripsp^* \to H^{*-1} \]
induced by the restriction of
\[ \alpha\gamma : H^* \otimes \gsym^2 H^* \to \dga^{*-1} . \]
\end{defn}

The effect of changing the choice can be expressed as the restriction of the
product of the identity map
$\idf : H^* \to H^*$ with $\delta : \kc^* \to H^{*-1}$
(see \cite[discussion following (9)]{bmp}):
\begin{equation}
\label{eq:uni_rule}
\trip_{c'} - \trip_c = \idf\, \delta(c',c) : \tripsp^* \to H^{*-1} .
\end{equation}
In particular, choices that are equivalent in the sense of
Definition \ref{def:choices} give the same triple product~$\trip_c$,
and $\trip$ is an affine linear map $\calc \to \hom(\tripsp^*, H^{*-1})$.

\smallskip
The uniform triple product is closely related to the \bmp. 
Let $\bmsp^*(\dga) = \kersym{\gsym^2 \kc^*}$, the kernel of the
full graded symmetrisation map $\gsym^2 \kc^* \to \gsym^4 H^*(\dga)$. 

\begin{defn}
The \bmp{} is the degree $-1$ linear map
$\princ : \bmsp^{*+1}(\dga) \to H^*(\dga)$
induced by $\alpha^2 \cdot \gamma : \bmsp^{*+1} \to \dga^*$ for any
cochain choice $(\alpha, \gamma)$.
\end{defn}

Although the definition of the Bianchi-Massey tensor involves the cochain
choice, it is in fact completely independent of that choice
\cite[Lemma 2.5]{bmp}.
For Poincar\'e DGAs it is equivalent to the uniform triple product.

\begin{lem}[{\cite[Lemma 2.8]{bmp}}]
\label{lem:trip_bmp}
For an $m$-dimensional Poincar\'e DGA $\dga$, the top degree component
$\princ : \bmsp^{m+1}(\dga) \to H^m(\dga)$ of the \bmp{} is equivalent to
the uniform triple product. In particular, there is a cochain choice $c$ such
that $\trip_c = 0$ if and only if $\princ = 0$.
\end{lem}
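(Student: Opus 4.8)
The plan is to obtain the statement from the definitions of $\princ$ and $\trip_c$ together with Poincar\'e duality. Both tensors are assembled, at the level of a cochain choice $c=(\alpha,\gamma)$, out of the single prederivative $\gamma$: the \bmp{} $\princ$ is induced by $\alpha^2\cdot\gamma=(d\gamma)\cdot\gamma$, while the uniform triple product $\trip_c$ is induced by $\alpha\cdot\gamma$. Under the Poincar\'e duality isomorphism $H^m(\dga)\cong\Q$ the top-degree pieces $\princ:\bmspam{\dga}\to H^m(\dga)$ and $\trip_c:\tripsp^{m+1}\to H^m(\dga)$ become linear functionals, and the goal is to compare them.

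The first step is to construct the Poincar\'e-duality comparison. The perfect pairings $H^k(\dga)\otimes H^{m-k}(\dga)\to\Q$ let one transport an $H^*$-tensor factor across an element of $\tripsp^{m+1}$ and re-express the result inside $\gsym^2\kc^*$; this gives a natural linear map $\Psi$ from functionals on $\tripsp^{m+1}$ to functionals on $\bmspam{\dga}$, obtained by dualizing a natural Poincar\'e-duality comparison of the two kernels. A direct computation --- using the Leibniz rule, the graded (anti)commutativity of the products of \S\ref{subsec:triples}, and the defining identity $\alpha^2|_{\kc^*}=d\gamma$ --- then shows that $\Psi(\trip_c)=\princ$ for every cochain choice $c$; here the cochain-independence of $\princ$ (\cite[Lemma~2.5]{bmp}) is what makes the identity uniform in $c$.

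The second step is to compare indeterminacies. Since $\princ$ is cochain-independent while, by \eqref{eq:uni_rule}, $\trip_c$ ranges over a single affine class whose direction is spanned by the terms $\idf\,\delta|_{\tripsp^{m+1}}$, and since $\Psi(\trip_c)=\princ$ is constant, these terms lie in $\ker\Psi$; one verifies that in fact they span $\ker\Psi$, so that $\Psi$ identifies the cochain-independent content of $\trip_c$ with $\princ$ --- the precise meaning of ``equivalent''. The ``in particular'' is then formal: if $\trip_c=0$ for some $c$ then $\princ=\Psi(\trip_c)=0$; conversely, if $\princ=0$ then $\trip_c\in\ker\Psi$ for every $c$, so $\trip_c$ is a combination of terms $\idf\,\delta|_{\tripsp^{m+1}}$, and \eqref{eq:uni_rule} then yields a cochain choice $c'$ with $\trip_{c'}=0$.

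The main obstacle is the first step: pinning down the right natural Poincar\'e-duality comparison of $\tripsp^{m+1}$ with $\bmspam{\dga}$, and then carrying out the bilinear bookkeeping of the graded-(anti)symmetric products to check that it intertwines $\alpha\cdot\gamma$ with $\alpha^2\cdot\gamma$ modulo exact cochains. Once that is available, the indeterminacy comparison and the deduction of the vanishing statement are routine.
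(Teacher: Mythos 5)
First, note that the paper offers no proof of this lemma: it is quoted verbatim from \cite[Lemma 2.8]{bmp}, so there is nothing internal to compare your argument against. Judged on its own terms, your outline has the right logical skeleton --- a linear map $\Psi$ from functionals on $\tripsp^{m+1}$ to functionals on $\bmspam{\dga}$ satisfying $\Psi(\trip_c)=\princ$ for every cochain choice $c$ and with $\ker\Psi$ equal to the indeterminacy subspace $\{(\idf\,\delta)|_{\tripsp^{m+1}} : \delta\in L_2\}$ would indeed yield both the equivalence and the ``in particular'' (the final step also silently uses that every $\delta\in L_2$ is realised by an actual change of cochain choice, which holds because $\calc$ is an affine space over $L_2/\idf L_1$, but should be said). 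Poincar\'e duality is indeed why the statement is restricted to Poincar\'e DGAs and to the top degree.

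The gap is that the entire content of the lemma sits in the two steps you defer. (a) The map $\Psi$ is never constructed, and the recipe given does not obviously produce one: applying the pairings $H^k\otimes H^{m-k}\to\Q$ to the $H^*$-factor of an element of $\tripsp^{m+1}\subseteq H^*\otimes\kc^*$ yields something in $(H^{m-*})^{\vee}\otimes\kc^*$, which is not ``inside $\gsym^2\kc^*$''; manufacturing a second $\kc^*$-factor requires the ring structure on $H^*$ and the definition of $\kc^*$ as the kernel of $\gsym^2 H^*\to H^*$, not just the duality pairing, and one must then check that the resulting map is compatible with the two symmetrisation kernels defining $\tripsp$ and $\bmspam{\dga}$. (b) The identities $\Psi(\trip_c)=\princ$ and $\ker\Psi=\{(\idf\,\delta)|_{\tripsp^{m+1}}\}$ are precisely the substance of \cite[Lemma 2.8]{bmp}; labelling them ``a direct computation'' and ``one verifies'' leaves the lemma unproved. (A smaller point: the cochain-independence of $\princ$ does not by itself make the identity $\Psi(\trip_c)=\princ$ uniform in $c$; rather, one proves the identity for each $c$ and then observes the right-hand side is constant.) As written this is a plausible plan of attack rather than a proof; either carry out the construction of $\Psi$ and the two verifications in full, or simply cite \cite[Lemma 2.8]{bmp} as the paper does.
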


\subsection{Defining \texorpdfstring{$\nmt$}{P}} \label{ss:gen-def}

We now present the definition of $\nmt$ for a general DGA $\dga$.
Let us first describe its domain.
For any graded vector space $H^*$, let
\begin{equation}
\label{eq:grmap}
\begin{aligned}
\rmap: H^* \otimes \agsym^2\gsym^2 H^* &\to \gsym^3 H^* \otimes \gsym^2 H^* \\
q \otimes (x_1x_2 \wedge x_3x_4) &\mapsto
(qx_1x_2)(x_3x_4) - (-1)^{(d_1+d_2)(d_3+d_4)}(qx_3x_4)(x_1x_2)
\end{aligned}
\end{equation}
for $x_i$ of degree $d_i$.

\begin{defn}
\label{def:ndom_gen}
For a graded algebra $H^*$, if $\kc^* \subset \gsym^2 H^*$ denotes the kernel of
the product map $\gsym^2 H^* \to H^*$, then we define $\ndom^*(H^*)$ to be the kernel
of the restriction of $\rmap$ to $H^* \otimes \agsym^2 \kc^*$.
\end{defn}

\begin{lem}
Let $\dga$ be a DGA with cohomology $H^* = H^*(\dga)$. Given a choice $c = (\alpha, \gamma)$,
the restriction of
\[ \alpha\gamma^2 : H^* \otimes \agsym^2 \gsym^2 H^* \to \dga^{*-2} \]
to $\ndom = \ndom(H^*)$ takes closed values.
\end{lem}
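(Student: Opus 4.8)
The plan is to imitate the proof of Proposition~\ref{prop:ag2-closed}, replacing the ungraded symmetric and exterior products used there by the graded symmetric and antisymmetric products of algebra-valued maps set up earlier in this section, and keeping track of Koszul signs. The only structural inputs are that $\alpha$ takes closed values, so $d\alpha = 0$, and that $d\gamma = \alpha^2_{|\kc^*}$ by Definition~\ref{def:choice}.

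First I would compute $d(\alpha\gamma^2)$ using the graded Leibniz rule. Since $\gamma$ has odd internal degree, bigraded anticommutativity gives $\gamma \wedge d\gamma = -(d\gamma)\wedge\gamma$, so the two terms in the Leibniz expansion of $d(\gamma\wedge\gamma)$ coincide and (with the convention $\gamma\wedge\gamma = 2\gamma^2$) one obtains $d(\gamma^2) = (d\gamma)\wedge\gamma$. As $\alpha$ is closed of internal degree $0$, and $d\gamma = \alpha^2_{|\kc^*}$, this gives
\[
d(\alpha\gamma^2) \;=\; \alpha\bigl(\alpha^2 \wedge \gamma\bigr) \;:\; H^* \otimes \agsym^2 \kc^* \longrightarrow \dga^{*-1},
\]
the graded analogue of the identity $d(\alpha\gamma^2) = \alpha\alpha^2\wedge\gamma$ in the proof of Proposition~\ref{prop:ag2-closed}.

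Next I would establish the graded analogue of the computation preceding \eqref{eq:vanishing}: for any linear maps $\alpha : H^* \to \dga$ and $\gamma : \kc^* \to \dga$, the composite of $\rmap$ from \eqref{eq:grmap} with the map $\gsym^3 H^* \otimes \kc^* \to \dga$, $\xi \otimes p \mapsto \alpha^3(\xi)\gamma(p)$, is a nonzero rational multiple of $\alpha\bigl(\alpha^2 \wedge \gamma\bigr) : H^* \otimes \agsym^2 \kc^* \to \dga$. (Note that $\rmap$ restricted to $H^* \otimes \agsym^2 \kc^*$ does land in $\gsym^3 H^* \otimes \kc^*$, since the second tensor factor of $\rmap(q \otimes (x_1x_2 \wedge x_3x_4))$ is $x_3x_4$ or $x_1x_2$.) The key point is that the signs $(-1)^{(d_1+d_2)(d_3+d_4)}$ built into \eqref{eq:grmap} are precisely the Koszul signs incurred when the two factors $x_1x_2, x_3x_4 \in \gsym^2 H^*$ are interchanged, both inside $\agsym^2 \gsym^2 H^*$ and when they are redistributed between the $\alpha^3$-factor and the $\gamma$-factor of the product; in the degree-$2$ setting of Proposition~\ref{prop:ag2-closed} all relevant degrees are even, all these signs are $+1$, and the identity reduces to \eqref{eq:vanishing}.

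Combining the two steps, $d(\alpha\gamma^2)$ is a scalar multiple of a map that factors through $\rmap$; since $\ndom = \ndom(H^*)$ is by Definition~\ref{def:ndom_gen} exactly the kernel of the restriction of $\rmap$ to $H^* \otimes \agsym^2 \kc^*$, the restriction of $d(\alpha\gamma^2)$ to $\ndom$ vanishes, which is the claim. The main obstacle is, as in the $8$-dimensional case, not conceptual but a matter of Koszul-sign bookkeeping: verifying that $d(\gamma^2) = (d\gamma)\wedge\gamma$ with no surviving second term, and that the composite $\alpha^3\gamma \circ \rmap$ reproduces $\alpha(\alpha^2 \wedge \gamma)$ with exactly the sign dictated by \eqref{eq:grmap}. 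These checks are precisely what the sign conventions for products of algebra-valued maps, and the sign in the definition \eqref{eq:grmap} of $\rmap$, were arranged to pass.
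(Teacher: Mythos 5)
Your proposal matches the paper's proof: the paper computes $d(\alpha\gamma^2) = \alpha(\alpha^2\wedge\gamma) = \third\,\alpha^3\gamma\circ\rmap$ and concludes vanishing on $\ndom = \ker\rmap$, exactly as you do (you leave the constant as an unspecified nonzero rational, which is immaterial). Correct and essentially identical in approach.
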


\begin{proof}
Like in \S\ref{subsec:special_proof} we find that
\[ d(\alpha\gamma^2) = \alpha (\alpha^2 \wedge \gamma) =
\third \alpha^3\gamma \circ \rmap : H^* \otimes \agsym^2 \gsym^2 H^* \to \dga^{*-1} \]
and thus vanishes on $\ndom$.
\end{proof}

\begin{defn}
Given a choice $c$, the pentagonal Massey tensor 
\[ \nmt_c : \ndom^* \to H^{*-2} \]
is the degree $-2$ linear map induced by $\alpha\gamma^2$.
\end{defn}

We now wish to understand the dependence of $\nmt_c$ on $c$.
Consider the natural inclusion
\begin{equation}
\label{eq:jinc}
j : \agsym^2\gsym^2 H^* \to \gsym^2 H^* \otimes \gsym^2 H^*, \; 
x\wedge y \mapsto xy - (-1)^{d_1d_2}yx .
\end{equation}

\begin{lem}
\label{lem:rmap_trip}
The image of $\ndom$ under $\idf \, j :
H^* \otimes \agsym^2\gsym^2 H^* \to H^* \otimes \gsym^2 H^* \otimes \gsym^2 H^*$
is contained in $\tripsp \otimes E^*$ (where $\tripsp$ denotes the kernel
of the full symmetrisation map $s$ from \eqref{eq:fullsym} like before).
\end{lem}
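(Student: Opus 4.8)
The plan is to show that both projections of $(\idf\, j)(\ndom)$ onto the relevant factors lie in the claimed subspaces. Concretely, an element of $H^* \otimes \agsym^2\gsym^2 H^*$ lands in $H^* \otimes \agsym^2\kc^*$ precisely when each wedge factor $x_1x_2 \wedge x_3x_4$ has both $x_1x_2$ and $x_3x_4$ in $\kc^*$; applying $j$ to such a factor gives $x_1x_2 \otimes x_3x_4 - (\pm) x_3x_4 \otimes x_1x_2$, which visibly already lies in $\kc^* \otimes \kc^* \subseteq \gsym^2 H^* \otimes E^*$. So the second $E^*$-factor in the target is automatic from the definition of $\ndom$ (Definition~\ref{def:ndom_gen}), with no use of the kernel condition $\rmap = 0$. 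The substance is therefore entirely in the first factor: I must show that for $P \in \ndom$, the image $(\idf\, j)(P)$, viewed in $(H^* \otimes \gsym^2 H^*) \otimes \gsym^2 H^*$, has its first tensor slot $H^* \otimes \gsym^2 H^*$ landing in $\tripsp = \ker s$.

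First I would write an element of $\ndom$ in the form $P = \sum_k q_k \otimes (x_k y_k \wedge z_k w_k)$ with each $x_ky_k, z_kw_k \in \kc^*$ and $\rmap(P) = 0$ in $\gsym^3 H^* \otimes \gsym^2 H^*$. Then $(\idf\, j)(P) = \sum_k q_k \otimes x_ky_k \otimes z_kw_k - (\pm)\, q_k \otimes z_kw_k \otimes x_ky_k$. Now apply the full symmetrisation $s : H^* \otimes \gsym^2 H^* \to \gsym^3 H^*$ to the \emph{first} two tensor factors (leaving the last $\gsym^2 H^*$ slot fixed); this is exactly $s \otimes \idf$. The key identity to establish is that $(s \otimes \idf)\bigl((\idf\, j)(P)\bigr)$ is, up to a nonzero scalar, precisely $\rmap(P)$ — because $s(q \otimes xy) = (qxy) \in \gsym^3 H^*$ and the two terms of $j$ assemble into the antisymmetrised difference $(qx_ky_k)(z_kw_k) - (\pm)(qz_kw_k)(x_ky_k)$ appearing in \eqref{eq:grmap}. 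Hence $\rmap(P) = 0$ forces $(s \otimes \idf)\bigl((\idf\, j)(P)\bigr) = 0$, i.e. each component of $(\idf\, j)(P)$ in the first slot is killed by $s$, which is exactly the statement that it lies in $\tripsp$. Combined with the first observation that the last slot lies in $E^*$, this gives $(\idf\, j)(\ndom) \subseteq \tripsp \otimes E^*$.

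The one point requiring care — and the main obstacle — is bookkeeping of signs and the interplay between the two a priori different symmetrisations in play: the graded \emph{anti}symmetrisation defining $\agsym^2\gsym^2 H^*$ versus the graded symmetrisation $s$ onto $\gsym^3 H^*$, together with the Koszul sign $(-1)^{(d_1+d_2)(d_3+d_4)}$ in \eqref{eq:grmap} and the sign $(-1)^{d_1d_2}$ in \eqref{eq:jinc}. I would handle this by checking the identity $(s\otimes\idf)\circ(\idf\, j) = \lambda\cdot\rmap$ on a spanning set of the form $q \otimes (xy \wedge zw)$ for homogeneous $x,y,z,w,q$, tracking that the antisymmetry built into $\wedge$ reproduces exactly the signed difference of the two terms of $\rmap$, and pinning down the scalar $\lambda$ (it will be a positive rational depending only on the normalisation conventions for $s$ and the products, and its precise value is irrelevant since we only need $\lambda \neq 0$). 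Once this sign computation is settled the rest is immediate.
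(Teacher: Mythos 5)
Your proposal is correct and is essentially the paper's own argument: the paper proves the lemma in one line by observing that $\rmap$ factors as $(s\,\idf)\circ(\idf\, j)$, so $\ker\rmap$ lands in $(\ker s)\otimes\gsym^2 H^*$, with the $E^*$ factor coming for free from the restriction of the domain to $H^*\otimes\agsym^2\kc^*$. Your extra care about signs and the scalar is fine but not needed beyond verifying that factorisation on generators, which is exactly what the paper implicitly does.
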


\begin{proof}
$\rmap$ is the composition of $\idf \, j$
and $s\, \idf :
H^* \otimes \gsym^2 H^* \otimes \gsym^2 H^* \to \gsym^3 H^* \otimes \gsym^2 H^*$,
so $\ker \rmap$ is mapped to $(\ker s) \otimes \gsym^2 H^*$.
\end{proof}

For any linear map $\delta : \kc^* \to H^{*-1}$ Lemma \ref{lem:rmap_trip} means that the composition 
\[ 
(\trip_c \delta) \circ (\idf \, j) : \ndom^* \to H^{*-2}
\]
of $\idf j : \ndom^* \rightarrow \tripsp \otimes E^*$ and $\trip_c \delta : \tripsp \otimes E^* \rightarrow H^{*-2}$
is well-defined.

\begin{thm}
\label{thm:choices}
Let $c = (\alpha, \gamma)$ and $c' = (\alpha', \gamma')$
be two different choices, and let $\delta : \kc^* \to H^{*-1}$ be
any representative of $c' - c \in L_2/\idf L_1$. Then
\begin{equation}
\label{eq:choice_dep}
\nmt_{c'} - \nmt_c = (\trip_c\delta)\circ (\idf \, j) + \idf \, \delta^2 . 
\end{equation}
(In particular the right hand side is independent of the choice of
representative $\delta$ of $c'-c$.)
\end{thm}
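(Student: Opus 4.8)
The strategy is to compute $\nmt_{c'} - \nmt_c$ at the cochain level by expanding $\alpha'(\gamma')^2 - \alpha\gamma^2$ and showing that, modulo exact terms and terms that factor through $\rmap$ (hence vanish on $\ndom$), what remains is exactly the sum of two explicit contributions: one matching $(\trip_c\delta)\circ(\idf\,j)$ and one matching $\idf\,\delta^2$. Write $\alpha' = \alpha + d\beta$ for some $\beta : H^* \to \dga^{*-1}$, and choose $\gamma'$ so that $\gamma' - \gamma - \beta(\alpha + \half d\beta)$ induces the given representative $\delta : \kc^* \to H^{*-1}$ of $c'-c$; that is, $\gamma' = \gamma + \delta + \beta(\alpha+\half d\beta)$ where by abuse $\delta$ also denotes a lift to $\clalg^{*-1}$. (Concretely $\sigma := \gamma' - \gamma$ satisfies $d\sigma = (\alpha')^2 - \alpha^2$ restricted to $\kc^*$, and $\sigma = \delta + \beta(\alpha+\half d\beta)|_{\kc^*}$.) First I would redo the bookkeeping of the proof of Theorem~\ref{thm:special} in the graded setting, using the product identities of \S\ref{subsec:triples}, but now retaining the $\delta$-terms rather than forcing $\delta$ exact.

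The key steps, in order: (1) Expand $\alpha'(\gamma')^2 - \alpha\gamma^2$ using $\gamma' = \gamma+\sigma$ and the graded Leibniz rule, collecting the purely $\beta$-dependent part. By the computation already carried out in Theorem~\ref{thm:special}, that entire part is, modulo exact terms, a sum of maps of the form $\alpha_1(\alpha_2\alpha_3\wedge\gamma)$-type, i.e. things factoring through $\rmap$; so it contributes nothing on $\ndom$. This step lets me reduce to the case $\beta = 0$, i.e.\ $\alpha'=\alpha$ and $\gamma' = \gamma+\delta$ with $\delta$ taking \emph{closed} values. (2) With $\beta=0$, expand $\alpha(\gamma+\delta)^2 - \alpha\gamma^2 = \alpha(\gamma\wedge\delta) + \alpha(\delta\wedge\gamma) + \alpha\delta^2 = \alpha\gamma\wedge\delta + \alpha\delta^2$ up to sign conventions. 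Since $\delta$ is closed, $\alpha\delta^2$ induces exactly $\idf\,\delta^2 : \ndom^* \to H^{*-2}$ — that is the second term on the RHS of \eqref{eq:choice_dep}. (3) Identify $\alpha\gamma\wedge\delta$ on $\ndom$ with $(\trip_c\delta)\circ(\idf\,j)$: by Lemma~\ref{lem:rmap_trip}, $\idf\,j$ sends $\ndom^*$ into $\tripsp\otimes E^*$, on which $q\otimes p \mapsto \trip_c(q\otimes\ldots)\cdot\delta(p)$ is realized on cochains by $\alpha(q)\gamma(\ldots)\delta(p)$; matching this against $\alpha\gamma\wedge\delta$ evaluated on an element of $\ndom^* \subseteq H^*\otimes\agsym^2\gsym^2H^*$ is a matter of unwinding the definitions of $j$ and of the antisymmetric product $\gamma\wedge\delta$, and checking the combinatorial factors and signs agree. (4) Conclude that the total difference restricted to $\ndom$ is $(\trip_c\delta)\circ(\idf\,j) + \idf\,\delta^2$ in cohomology; independence of the representative $\delta$ is then automatic, since the left-hand side $\nmt_{c'}-\nmt_c$ does not see it.

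The main obstacle will be step (1): carefully verifying that \emph{all} the $\beta$-dependent terms — including the mixed terms where $\sigma = \delta + \beta(\alpha+\half d\beta)$ couples $\delta$ to $\beta$ — either are exact or factor through $\rmap$. The cross terms $\alpha\gamma\wedge(\beta(\alpha+\half d\beta))$ and $\alpha\delta\wedge(\beta(\alpha+\half d\beta))$ and the genuinely degree-shifted analogues of the long chain of cancellations in \eqref{eq:expand} must be reorganized, using the graded associativity/commutativity/Leibniz identities, into combinations of the canonical ``$\rmap$-type'' expression \eqref{eq:vanishing} (in its graded form $\third\alpha^3\gamma\circ\rmap$ and variants) plus $d(\text{something})$. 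I expect this to be a longer but entirely mechanical version of the calculation in the proof of Theorem~\ref{thm:special}, the only genuinely new input being that the terms linear in $\delta$ survive and assemble — after using that $\delta$ is closed once $\beta$ has been absorbed — into precisely $\alpha\gamma\wedge\delta$ and $\alpha\delta^2$ up to $\rmap$-factoring and exactness. Keeping track of the Koszul signs coming from $\agsym$ and $\gsym$ throughout, and from the inclusion $j$ in \eqref{eq:jinc}, is where errors are most likely, so I would fix conventions at the outset and check them against the ungraded $H^3=0$ case as a sanity check.
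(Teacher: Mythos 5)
Your overall strategy — expand $\alpha'(\gamma')^2-\alpha\gamma^2$ at the cochain level, discard exact terms and terms factoring through $\rmap$, and identify the survivors with the two terms of \eqref{eq:choice_dep} — is the same as the paper's, but there is a genuine gap at the end. The theorem asserts \eqref{eq:choice_dep} for \emph{every} representative $\delta$ of $c'-c$, whereas your computation produces it only for the particular representative induced by $\eta=\gamma'-\gamma-\beta(\alpha+\half d\beta)$ for your chosen $\beta$ (and, as noted in \S\ref{subsec:triples}, this representative genuinely depends on $\beta$). Your closing remark that independence of the representative is ``automatic, since the left-hand side does not see it'' is backwards: the left-hand side is known to equal the right-hand side only for the representative you computed with, so to extend the identity to an arbitrary representative you must prove directly that the right-hand side is unchanged when $\delta$ is replaced by $\delta+n\idf$ for $n:H^*\to H^{*-1}$. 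This is a substantive step (the second of the paper's three steps), not a formality: one shows $\idf(n\idf)^2$ vanishes on $\ndom$ by rewriting it as $\half\bigl(\idf(n\idf)+\half n\idf^2\bigr)\wedge n\idf-\tfrac14 n\idf^2\wedge n\idf$ and using that the first factor is fully symmetric while $\idf^2{}_{|\kc^*}=0$, and one shows $(\trip_c(n\idf))\circ(\idf\,j)$ vanishes on $\ndom$ via the cochain identity $\alpha\gamma\wedge\eta\alpha+d(\eta\gamma^2)=\alpha\eta\alpha\wedge\gamma+\eta\alpha^2\wedge\gamma$, whose right-hand side factors through $\rmap$. Neither computation is a consequence of anything you have established.

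A second, more practical defect is in your step (1): you claim the $\beta$-dependent part is disposed of ``by the computation already carried out in Theorem~\ref{thm:special}'', but that computation expands $(\alpha+d\beta)(\gamma+\sigma)^2$ with $\sigma=\alpha\beta+\half\beta d\beta$ only. Your $\gamma'$ also contains $\eta$, so a one-shot expansion produces cross terms such as $\alpha(\eta\wedge\sigma)$ and $d\beta(\gamma\wedge\eta)$ that do not occur in \eqref{eq:expand} and would have to be reorganised from scratch. The paper avoids them entirely by interposing the choice $c''=(\alpha',\gamma+\alpha\beta+\half\beta d\beta)$: one first shows $\nmt_{c''}=\nmt_c$ with $c''-c=0$ (hence $\trip_{c''}=\trip_c$ by \eqref{eq:uni_rule}), and then compares $c'$ with $c''$, where $\alpha$ is unchanged and only the clean terms $(\alpha'\gamma''\eta)\circ(\idf\,j)+\alpha'\eta^2$ appear. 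Adopting this factorisation turns what you call the ``main obstacle'' into a non-issue.
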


As $\nmt_{c+\delta+\epsilon} - \nmt_{c+\delta} - \nmt_{c+\epsilon} + \nmt_c
= \idf \, \delta {\wedge} \epsilon$ is bilinear in $\delta$ and $\epsilon$ and
independent of $c$, we can interpret the relation \eqref{eq:choice_dep}
to mean that $c \mapsto \nmt_c$ is
an ``affine quadratic function'' $\calc \to \hom(\ndom^*, H^{*-2})$.

If $c$, $c'$ and $c''$ are three different choices, it is not immediately
apparent that the expressions for $\nmt_{c''}-\nmt_{c'}$ and $\nmt_{c'}-\nmt_c$
given by \eqref{eq:choice_dep} add up to the expression for $\nmt_{c''}-\nmt_c$.
As a sanity check, let us verify this directly.
Suppose $c''-c'$ is represented by $\epsilon : \kc^* \to H^{*-1}$.
Then we want vanishing of
\begin{align*}
&\trip_c(\delta+\epsilon)\circ (\idf \, j)  + \idf \, (\delta+\epsilon)^2 -
\trip_{c'}\epsilon \circ (\idf \, j)  - \idf\,\epsilon^2
- \trip_c \delta \circ (\idf \, j)  - \idf\, \delta^2 \\
= \; &
(\trip_c - \trip_{c'})\epsilon \circ (\idf \, j) + \idf \, \delta {\wedge} \epsilon \\
= \; & (\trip_c - \trip_{c'} + \idf\,\delta)\epsilon \circ (\idf \, j)
\end{align*}
which does indeed vanish by \eqref{eq:uni_rule}.

\begin{proof}[Proof of Theorem \ref{thm:choices}]
We prove the claim in three steps.
\begin{itemize}
\item If $\alpha' = \alpha$, then \eqref{eq:choice_dep} holds for some
representative $\delta : \kc^* \to H^{*-1}$ of $c' - c$.
\item The right-hand side of \eqref{eq:choice_dep} 
is independent of the choice of representative of $c'- c$.
\item Given $\alpha, \gamma$ and $\alpha'$, there is some choice of $\gamma'$
such that $c'-c$ is represented by $\delta = 0$ and $\nmt_{c'} = \nmt_c$.
\end{itemize}
So let us first consider the case when $\alpha' = \alpha$.
Then $\gamma' = \gamma + \eta$ for some $\eta : \kc^* \to \clalg^{*-1}$,
and
\[ \alpha(\gamma')^2 - \alpha\gamma^2
= \alpha(\gamma\wedge \eta + \eta^2)
= (\alpha\gamma\eta) \circ (\idf j) + \alpha\eta^2 . \]
The restriction of this to $\ndom$ induces precisely the map in
\eqref{eq:choice_dep}, for $\delta : \kc^* \to H^{*-1}$ the map induced by
$\eta$.

Next we show that both terms in \eqref{eq:choice_dep} vanish when
$\delta$ is the restriction of $n\idf : \gsym^2 H^* \to H^{*-1}$ for some
$n : H^* \to H^{*-1}$.
Writing $\idf(n\idf)^2 : H^* \otimes \agsym^2 \gsym^2 H^* \to H^{*-2}$ as
$\half(\idf(n\idf) + \half n\idf^2) \wedge n\idf - \frac{1}{4} n\idf^2 \wedge n\idf$,
the restriction of the first term to $\ndom = \ker \rmap$ vanishes because
$\idf(n\idf) + \half n\idf^2$ is fully symmetric, while the restriction to
$\ndom \subseteq H^* \otimes \agsym^2 \kc^*$ of the
second term vanishes because ${\idf^2}_{|\kc^*} = 0$ by the definition of $\kc^*$.

Given a choice $(\alpha, \gamma)$, and some $\eta : H^* \to \clalg^{*-1}$
representing $n$, $(\trip_c (n\idf)) \circ (\idf j) : \ndom^* \to H^{*-2}$ is induced
by the restriction of
\[ \alpha\gamma \wedge \eta\alpha : H^* \otimes \agsym^2 \kc^* \to \dga^{*-2} . \]
Now
\[ \alpha \gamma \wedge \eta \alpha + d(\eta\gamma^2)
= \alpha\eta\alpha\wedge \gamma + \eta\alpha^2\wedge\gamma, \]
whose restriction to $\ndom$ vanishes. Hence 
$(\trip_c (n\idf)) \circ (\idf j)$ vanishes too. 

Finally, given $\alpha$, $\gamma$ and $\alpha'$, 
write $\alpha' = \alpha + d\beta$ for some $\beta : H^2 \to \dga^1$,
and take $\gamma' = \gamma + \alpha\beta + \half\beta d \beta$.
Then certainly $c' - c$ is represented by $\gamma = 0$.
Meanwhile, with the notation we have set up we can show that
the restriction of $\alpha'(\gamma')^2 - \alpha\gamma^2$ to $\ndom$ is exact
using exactly the same calculations as in \eqref{eq:expand} and the following
equations. 
\end{proof}

\section{The role of \texorpdfstring{$\nmt$}{P} in rational homotopy classification}

We expect that for sufficiently highly connected closed manifolds,
$\nmt$ should play a role in the rational homotopy classification alongside
the \bmp, but even saying what it means for the pentagonal Massey tensors of
two spaces to ``be the same'' is somewhat subtle because of the way that $\nmt$
depends on cochain choices.

Any rational homotopy classification result will encompass as a special case
a criterion for formality. In the present paper we are content to just prove
such a formality criterion, in the context of closed \tkc{} manifolds
of dimension $5n-2$. But even to state that we first need to at least clarify
what it means for $\nmt$ to vanish.

\subsection{Dependence of \texorpdfstring{$\nmt$}{P} on choices}

While the transformation rule for how $\nmt$ depends on choices is given in
Theorem \ref{thm:choices}, we would now like to clarify how the terms in the
formula behave, and in particular in what sense $\nmt$ is independent
of choices if the \bmp{} is trivial.

\begin{lem}
\label{lem:same_trip}
Suppose that $\delta : \kc^* \to H^{*-1}$ is such that the restriction
of $\idf\delta : H^* \otimes E^* \to H^{*-1}$ to $\tripsp$ vanishes.
Then the restriction of $\idf\delta^2 : H^* \otimes \agsym^2 \kc^* \to H^{*-2}$
to $\ndom$ vanishes, while
$\trip_c\delta \circ (\idf j) : \ndom^* \to H^{*-2}$ is independent of the choice $c$.
\end{lem}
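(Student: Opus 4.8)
The plan is to prove the two assertions separately, starting from the hypothesis that $\idf\delta : \tripsp \to H^{*-1}$ vanishes, which by \eqref{eq:uni_rule} is precisely the statement that $\trip_{c+\delta} = \trip_c$ for every choice $c$. For the first assertion — vanishing of $\idf\delta^2|_{\ndom}$ — I would reuse the trick from the proof of Theorem \ref{thm:choices}: write
\[
\idf\,\delta^2 = \half\bigl(\idf\,\delta + \delta'\bigr) \wedge \delta - \tfrac14 \delta'' \wedge \delta
\]
for suitable auxiliary maps and exploit the fact that $\ndom = \ker\rmap \subseteq H^* \otimes \agsym^2 \kc^*$. More precisely, the hypothesis lets me replace $\idf\delta$ on the nose, up to something factoring through the full symmetrisation $s : H^* \otimes \kc^* \to \gsym^3 H^*$; any fully symmetric term wedged with $\delta$ restricts to zero on $\ker\rmap$ (since $\rmap$ is $s\idf$ postcomposed with $\idf j$, as recorded in Lemma \ref{lem:rmap_trip}), and the residual term involves $\idf^2|_{\kc^*} = 0$. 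So the first assertion is really a formal consequence of the algebraic identities already established.

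For the second assertion I would compute $\trip_{c'}\delta \circ (\idf j) - \trip_c\delta \circ (\idf j)$ for two choices $c, c'$ with $c' - c$ represented by some $\epsilon : \kc^* \to H^{*-1}$. By \eqref{eq:uni_rule} this difference equals $(\idf\,\epsilon\,\delta) \circ (\idf j)$ — more carefully, it is $\idf \, (\epsilon \wedge \delta)$ restricted via $\idf j$, where $\epsilon \wedge \delta : \agsym^2 \kc^* \to H^{*-2}$ — so I need this to vanish on $\ndom$. Here I would use the hypothesis again: since $\idf\delta|_{\tripsp} = 0$, the map $\idf\delta$ on $H^* \otimes \kc^*$ factors through $\gsym^3 H^*$ (i.e.\ through $s$), and hence $\idf\,(\epsilon\wedge\delta)$ on $H^* \otimes \agsym^2\kc^*$ factors through $\rmap$ after symmetrising in one slot; composing with $\idf j$ and restricting to $\ndom = \ker\rmap$ kills it. The symmetry bookkeeping (which slot of $\agsym^2\kc^*$ the $\delta$ sits in, and the antisymmetrisation) is the only thing requiring care.

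The main obstacle I anticipate is purely combinatorial: keeping the graded-(anti)symmetry conventions straight when passing between $\idf\delta : H^* \otimes \kc^* \to H^{*-1}$ and its consequences for the bilinear/quadratic expressions $\epsilon\wedge\delta$ and $\delta^2$ on $H^* \otimes \agsym^2\kc^*$, and verifying that ``$\idf\delta$ factors through $s$'' really does force ``$\idf\,(\epsilon\wedge\delta)$ factors through $\rmap$'' with the correct signs. The conceptual content is light — both assertions follow from the single observation that vanishing on $\tripsp$ means factoring through full symmetrisation, combined with the identification $\rmap = (s\,\idf)\circ(\idf j)$ from Lemma \ref{lem:rmap_trip} — but making the sign manipulations airtight, ideally by quoting the relevant lines of the proof of Theorem \ref{thm:choices} verbatim rather than redoing them, will be where most of the writing goes.
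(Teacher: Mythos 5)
Your proposal is correct and follows essentially the same route as the paper: both assertions reduce, via the factorisation $\rmap = (s\,\idf)\circ(\idf\, j)$ from Lemma \ref{lem:rmap_trip} and the transformation rule \eqref{eq:uni_rule}, to the observation that a map of the form $((\idf\delta)\delta')\circ(\idf\, j)$ vanishes on $\ndom$ once $\idf\delta$ kills $\tripsp$ (with the swap of $\delta$ and $\delta'$ across the two slots of $\agsym^2\kc^*$ handled exactly as you indicate). The only superfluous element is the attempted reuse of the $\half(\cdots)\wedge\delta - \tfrac{1}{4}(\cdots)\wedge\delta$ decomposition and the appeal to $\idf^2|_{\kc^*}=0$ from the $\delta = n\idf$ step of Theorem \ref{thm:choices}: for a general $\delta$ satisfying the hypothesis there is no residual term, and the clean statement is simply $\idf\,\delta^2 = ((\idf\delta)\delta)\circ(\idf\, j)$.
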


\begin{proof}
$\idf \delta^2$ can be factorised as $((\idf \delta)\delta) \circ \idf j$
for $j$ as in \eqref{eq:jinc}, so vanishes on $\ndom$ by
Lemma \ref{lem:rmap_trip}.

Now suppose $c$ and $c'$ are two different choices. Then
according to \eqref{eq:uni_rule} $\trip_{c'}- \trip_c$
is the restriction to $\tripsp$ of $\idf \delta' : H^* \otimes \kc^* \to H^{*-1}$
for some $\delta' : \kc^* \to H^{*-1}$.
We thus need to prove that $((\idf \delta')\delta) \circ (\idf j) = 0$.
But by the definition of $j$,
$((\idf \delta')\delta) \circ (\idf j) 
= ((\idf \delta)\delta') \circ (\idf j)$, whose restriction to $\ndom$ again
vanishes by Lemma \ref{lem:rmap_trip}.
\end{proof}

\begin{cor}
\label{cor:zero_trip}
If $\trip_c = 0$ for some choice $c$, then $\nmt_c$ takes the same value for
all such choices.
\end{cor}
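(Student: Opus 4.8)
The plan is to deduce Corollary \ref{cor:zero_trip} directly from Theorem \ref{thm:choices} together with Lemma \ref{lem:same_trip}. Suppose $c$ and $c'$ are two cochain choices with $\trip_c = \trip_{c'} = 0$. Let $\delta : \kc^* \to H^{*-1}$ be any representative of $c' - c \in L_2/\idf L_1$. By Theorem \ref{thm:choices} we have
\[
\nmt_{c'} - \nmt_c = (\trip_c\delta)\circ(\idf\,j) + \idf\,\delta^2 ,
\]
so it suffices to show that both terms on the right vanish on $\ndom$.

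First I would observe that $\trip_{c'} - \trip_c = \idf\,\delta$ restricted to $\tripsp$, by the transformation rule \eqref{eq:uni_rule}; since both $\trip_c$ and $\trip_{c'}$ are assumed zero, this means precisely that the restriction of $\idf\,\delta : H^* \otimes E^* \to H^{*-1}$ to $\tripsp$ vanishes. That is exactly the hypothesis of Lemma \ref{lem:same_trip}. Applying that lemma gives immediately that $\idf\,\delta^2$ restricted to $\ndom$ vanishes, and that $\trip_c\delta \circ (\idf\,j)$ is independent of the choice $c$; but since $\trip_c = 0$, the map $\trip_c\delta\circ(\idf\,j)$ is literally the zero map. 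Hence both terms in \eqref{eq:choice_dep} vanish on $\ndom$, so $\nmt_{c'} = \nmt_c$.

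There is essentially no obstacle here once Lemma \ref{lem:same_trip} is in hand: the only point requiring a moment's care is checking that the hypothesis $\trip_c = \trip_{c'} = 0$ really does translate into the hypothesis of Lemma \ref{lem:same_trip}, which it does because \eqref{eq:uni_rule} identifies $\trip_{c'} - \trip_c$ with the restriction of $\idf\,\delta$ to $\tripsp$ for the chosen representative $\delta$. (One should also note that $\trip_c\delta\circ(\idf\,j)$ vanishing does not even need the independence-of-$c$ part of Lemma \ref{lem:same_trip} — once $\trip_c = 0$ the composite is zero on the nose — so the argument is quite short.) The slightly harder conceptual work was already done in proving Theorem \ref{thm:choices} and Lemma \ref{lem:same_trip}; this corollary is just the clean packaging of those for the formality application.
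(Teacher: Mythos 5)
Your argument is correct and is essentially identical to the paper's own proof: both invoke Theorem \ref{thm:choices}, use \eqref{eq:uni_rule} to see that the hypothesis of Lemma \ref{lem:same_trip} holds, kill the $\idf\,\delta^2$ term via that lemma, and note that the first term vanishes outright because $\trip_c = 0$.
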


\begin{proof}
If $c$ and $c'$ are two different choices such that $\trip_c = \trip_{c'} = 0$
then the restriction of $\idf \delta(c',c) : H^* \otimes E^* \to H^{*-1}$ to $\tripsp$
vanishes by \eqref{eq:uni_rule}.
Thus Lemma \ref{lem:same_trip} implies that the second 
term in the transformation rule \eqref{eq:choice_dep} for $\nmt$ vanishes,
while the first term vanishes because $\trip_c = 0$.
\end{proof}

\begin{rmk}
\label{rmk:bmt_zero}
If the \bmp{} of a closed manifold vanishes, then there exists some choice
$c$ such that $\trip_c$ vanishes according to Lemma \ref{lem:trip_bmp}.
Thus we can think of $\nmt$ as a single well-defined map $\ndom^* \to H^{*-2}$
in this situation.

This is the case in particular for 8-dimensional E-manifolds.
Indeed, in this case the \bmp{} is vacuous, there is a single equivalence class
$c$ of choice data, the degree 8 component of $\nmt_c$ reduces to the
definition in \S\ref{subsec:special_def}, and Theorem \ref{thm:special}
can be recovered as a special case of Theorem \ref{thm:choices}.
\end{rmk}

\subsection{Formality criterion}
\label{subsec:formality}

For a closed manifold, asking that the Bianchi-Massey and pentagonal Massey
tensors both vanish is a well-defined condition by Remark \ref{rmk:bmt_zero},
and this is clearly a necessary condition for formality
(Remark \ref{rmk:obstruction}). We now prove
Theorem \ref{thm:main1}, \ie that this condition is in fact equivalent to
formality for closed \tkc{} manifolds of dimension up to $5n-2$.

\begin{thm}
\label{thm:formality}
Let $n \geq 2$ and let $M$ be a closed \tkc{} manifold of dimension
$m \leq 5n-2$. Suppose that the \bmp{} $\princ$ of $M$ vanishes, and
that $\nmt_c = 0$ for the cochain choices~$c$ such that $\trip_c = 0$.
Then $M$ is formal.
\end{thm}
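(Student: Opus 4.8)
The plan is to use the cochain data supplied by the hypotheses to build a quasi-isomorphism exhibiting $M$ as formal, arranging matters so that the \bmp{} accounts for every obstruction in degrees below the top and $\nmt$ accounts for the single obstruction in the top degree. First I would observe that only $\dim M = 5n-2$ carries new content: since $\kc^*\subseteq\gsym^2 H^*$ lies in degrees $\geq 2n$, the space $\agsym^2\kc^*$ lies in degrees $\geq 4n$, and (as one checks that $\rmap$ is injective on $H^0\otimes\agsym^2\kc^*$) the domain $\ndom^*(H^*(M))$ lies in degrees $\geq 5n$; since $\nmt_c$ has degree $-2$ its only possibly-nonzero component is the map $\ndom^{5n}\to H^{5n-2}(M)$. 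So if $m=\dim M\leq 5n-3$ the hypothesis reduces to $\princ=0$ and $M$ is formal by \cite{bmp}; assume henceforth $m=5n-2$, so $H^{5n-2}(M)\cong\Q$ and ``$\nmt_c=0$'' means exactly that the functional $\ndom^{5n}\to\Q$ vanishes. Because $\princ=0$, Lemma \ref{lem:trip_bmp} provides a cochain choice $c=(\alpha,\gamma)$ with $\trip_c=0$; by Corollary \ref{cor:zero_trip} the value of $\nmt_c$ is the same for all such $c$, and by hypothesis it is $0$.

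I would then construct a quasi-isomorphism of commutative DGAs $\Psi\colon(\Lambda W,\delta)\to\dga$, where $\dga$ is a model of $M$ (\eg $\drpl(M)$) and $(\Lambda W,\delta)$ is the bigraded (Halperin--Stasheff) minimal model of the graded algebra $H^*(M)$; since $(\Lambda W,\delta)\to(H^*(M),0)$ is a quasi-isomorphism, producing $\Psi$ proves $M$ formal. One builds $\Psi$ by induction on the resolution degree of the generators $W=\bigoplus_i W_i$ (and within each $W_i$ on cohomological degree), the obstruction to extending over a generator $w$ of degree $k$ being the class $[\Psi(\delta w)]\in H^{k+1}(M)$. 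On $W_0$ one takes $\Psi$ to be $\alpha$ precomposed with the identification of $W_0$ with cohomology generators; on $W_1$ (the quadratic relations, i.e.\ $\kc$) one takes $\Psi=\gamma$, which is legitimate because $\alpha^2$ is exact on $\kc$; on $W_2$ (the first syzygies, parametrised in the relevant degrees by $\tripsp$) the obstruction is exactly the uniform triple product $\trip_c$, which vanishes; and on $W_3$ (the second syzygies) the obstruction, by a computation in the style of \S\ref{subsec:special_proof} and \S\ref{sec:general}, is a cohomology class assembled from $\alpha\gamma^2$ together with correction terms that factor through $\rmap$ or through $\trip_c$. The $\rmap$-part vanishes since $\ndom=\ker\rmap$, the $\trip_c$-part vanishes since $\trip_c=0$, and the residue is precisely $\nmt_c$ on the relevant piece of $\ndom^*$; as that piece is nonzero only in degree $5n$, where it maps to $H^{5n-2}(M)$, it vanishes by hypothesis. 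So $\Psi$ extends over $W_3$. Of the remaining generators, those of degree $\leq 5n-4$ have obstructions in degrees $\leq 5n-3$, which vanish by the results of \cite{bmp} (which control all formality obstructions in degrees $\leq 5n-3$ once $\princ=0$), and those of degree $>5n-3$ have obstructions in $H^{>5n-2}(M)=0$.

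The one piece that still needs care, and which I expect to be the main obstacle, is the obstruction coming from generators of degree $5n-3$ of resolution degree $\geq 4$ -- equivalently, the contribution of fivefold and higher Massey products to the top-degree obstruction. I anticipate dispatching this by Poincar\'e duality together with the connectivity bound, essentially as in \cite{bmp}: a dimension count should show that once the \bmp{} and the image of $\rmap$ have been accounted for, the only invariant surviving in degree $5n-2$ is the functional $\nmt_c\colon\ndom^{5n}\to\Q$, so that the fivefold Massey products impose no independent condition. Pinning this down -- in particular matching the abstract top-degree formality obstruction with the explicitly defined $\nmt$ of \S\ref{subsec:special_def}, and checking the consistency of the induction with the choices used to make $\trip_c$ vanish -- is the crux; the rest is the bookkeeping of the induction sketched above.
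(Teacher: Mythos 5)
Your overall strategy is reasonable, and your reduction to $m=5n-2$, your use of Lemma \ref{lem:trip_bmp} and Corollary \ref{cor:zero_trip}, and your identification of the resolution-degree-$2$ and $-3$ obstructions with $\trip_c$ and $\nmt_c$ all match the paper in spirit. But the step you yourself flag as the crux is a genuine gap, and it is exactly where the paper's argument diverges from yours. The paper does not run the obstruction theory through all resolution degrees; it invokes the Fern\'andez--Mu\~noz theorem \cite[Theorem 3.1]{fernandez05} that a closed $m$-manifold is formal as soon as it is $s$-formal with $m\leq 2s+1$, and then proves only $(3n-2)$-formality. That theorem is precisely the device that disposes of your ``generators of degree $5n-3$ of resolution degree $\geq 4$'' (fivefold and higher products): one never shows their obstructions vanish, because $s$-formality only requires exactness of closed elements of the ideal generated by the non-closed generators of degree $\leq 3n-2$, and Poincar\'e duality (Remark \ref{rmk:reduce}) reduces even that to degrees $\leq 3n-2$ and $=m$. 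Your proposed substitute --- ``a dimension count should show that the fivefold Massey products impose no independent condition'' --- is a hope rather than an argument, and making it precise would essentially amount to reproving the Fern\'andez--Mu\~noz theorem.

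A second, smaller omission: even in top degree the obstruction is not exhausted by $\nmt_c$. In the paper's decomposition of the closed part of $(N^{\leq 3n-2}\Lambda V^{\leq 3n-2})^m$ there is, besides the summands whose images in $H^m$ are $\trip_c$, $\alpha\trip_c$ and $\nmt_c$, a further summand injecting into $\calk\otimes D^{2n}$ (products of the syzygy generators $\calk$ with degree-$2n$ cohomology classes) whose image in $H^m$ is \emph{not} forced to vanish by the hypotheses; it is killed only by using Poincar\'e duality to readjust the predifferential $\psi$ on $\calk$. In your framework this corresponds to the fact that the $W_3$-obstruction is well defined only modulo changes of $\Psi$ on $W_2$, and you must check that this indeterminacy is large enough to absorb that summand. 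Your sketch asserts that the $W_3$-obstruction ``is precisely $\nmt_c$'' without accounting for this contribution or for the freedom needed to cancel it.
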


For $m \leq 5n-3$, it was already proved in \cite{bmp} that vanishing of
$\princ$ on its own is equivalent to formality (and $\nmt$ vanishes for degree reasons), so really we are only
interested in the case $m = 5n-2$.
To prove formality it is convenient to employ the concept
of \emph{$s$-formality} of
Fern\'andez and Muñoz~\cite{fernandez05}.

\begin{defn}
Let $\Lambda V$ be a minimal Sullivan-algebra, where $V = \bigoplus_i V^i$, $V^i$ denotes the degree~$i$ component, and let $C^i \leq V^i$ be the subspace of closed elements. We say that $\Lambda V$ is $s$-formal if for each $i \leq s$ one can choose a direct complement $N^i \leq V^i$ to $C^i$ such that any closed element in $N^{\leq s} \Lambda V^{\leq s}$ (the ideal in $\Lambda V^{\leq s}$ generated by $N^{\leq s}$) is exact in $\Lambda V$.

A DGA $\dga$ is $s$-formal if its minimal model is isomorphic to an $s$-formal minimal Sullivan-algebra, equivalently, if there is an $s$-formal minimal Sullivan-algebra $\Lambda V$ and a quasi-isomorphism $\psi : \Lambda V \rightarrow \dga$.
\end{defn}

\begin{thm}[{\cite[Theorem 3.1]{fernandez05}}]
Formality is equivalent to $s$-formality
for closed manifolds of dimension $m \leq 2s+1$.
\end{thm}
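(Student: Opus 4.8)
The plan is to prove the two implications separately; the implication ``formal $\Rightarrow$ $s$-formal'' holds for every $s$ and is elementary, while the converse is where the dimension hypothesis $m \le 2s+1$ does its work.

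First I would recall the characterisation of formality due to Deligne--Griffiths--Morgan--Sullivan: a minimal Sullivan algebra $\Lambda V$ is formal if and only if one can choose, in every degree $i$, a complement $N^i$ of the space $C^i$ of closed elements of $V^i$ so that every closed element of the ideal $N\Lambda V$ (with $N = \bigoplus_i N^i$) is exact in $\Lambda V$; in other words, formality is exactly the ``$s=\infty$'' instance of the definition of $s$-formality. Granting this, the implication ``formal $\Rightarrow$ $s$-formal'' is immediate for any $s$, with no dimension restriction: given a decomposition $V = C \oplus N$ witnessing formality, its truncation to degrees $\le s$ witnesses $s$-formality, since the ideal generated by $N^{\le s}$ inside $\Lambda V^{\le s}$ is contained in $N\Lambda V$, and closed elements of the latter are exact in $\Lambda V$.

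For the converse, suppose $M$ is a closed $m$-manifold with $m \le 2s+1$ that is $s$-formal, with minimal model $\Lambda V$; fix the complements $N^i$ ($i \le s$) supplied by $s$-formality and extend them to arbitrary complements $N^i$ ($i > s$) in the remaining degrees. By the criterion above it suffices to show that every closed homogeneous $z \in N\Lambda V$ is exact. If $\deg z > m$ this is automatic, since $H^{\deg z}(\Lambda V) \cong H^{\deg z}(M) = 0$ for a closed $m$-manifold. For $\deg z \le m$ I would exploit the numerical inequality $2(s+1) > m$: any monomial in the generators of $\Lambda V$ that contains two or more factors from $V^{>s}$ has degree at least $2(s+1) > m$, so in degrees $\le m$ the only monomials occurring involve at most one generator from $V^{>s}$. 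Using this, together with Poincar\'e duality for the cohomology algebra $H^*(M)$ (which controls the cohomology class carried by that single factor from $V^{>s}$), one shows that $z$ is cohomologous in $\Lambda V$ to an element $z_0$ lying in the ideal generated by $N^{\le s}$ inside $\Lambda V^{\le s}$. Once $z$ has been pushed down in this way, $s$-formality applies verbatim: $z_0$, and hence $z$, is exact.

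The main obstacle is precisely this last reduction --- trading the ``high-generator'' part of $z$, modulo exact elements, for terms that live in $\Lambda V^{\le s}$ and still belong to the ideal of $N^{\le s}$. The natural tool is the filtration of $\Lambda V$ by the number of tensor factors coming from $V^{>s}$; this filtration is not $d$-stable, but the only way $d$ can lower the filtration level is through terms which, by $2(s+1) > m$ again, are forced into degrees $> m$ and hence are invisible in cohomology. Running the resulting inductive clean-up and invoking Poincar\'e duality to dispose of the leftover cohomology classes is the technical heart of the argument; it is carried out in full in \cite[Theorem 3.1]{fernandez05}, and the three conceptual ingredients are the DGMS criterion, the inequality $2(s+1) > m$, and Poincar\'e duality of $H^*(M)$.
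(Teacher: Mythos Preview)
The paper does not prove this statement at all: it is quoted verbatim as \cite[Theorem 3.1]{fernandez05} and used as a black box in the proof of Theorem~\ref{thm:formality}. So there is no ``paper's own proof'' to compare your attempt against.

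That said, a brief comment on your sketch versus the original argument in Fern\'andez--Mu\~noz. Their proof does not go via the DGMS criterion in the way you outline. Instead, from the $s$-formality data they build an algebra map $\rho : \Lambda V^{\le s} \to H^*(M)$ (sending $C^{\le s}$ to its cohomology class and $N^{\le s}$ to zero), show that for $i \le m$ one has $(\Lambda V)^i = (\Lambda V^{\le s})^i \oplus (\textup{ideal of } V^{>s})^i$ by the $2(s+1) > m$ count, and then prove that \emph{any} DGA extension of $\rho$ to $\Lambda V$ is automatically a quasi-isomorphism, using Poincar\'e duality to check surjectivity in degrees $> s$. This sidesteps entirely the ``push $z$ down into $N^{\le s}\Lambda V^{\le s}$ modulo exacts'' manoeuvre that you identify as the main obstacle, and which in your write-up remains a gesture rather than an argument: it is not clear how Poincar\'e duality lets you trade the $V^{>s}$-part of a closed $z \in N\Lambda V$ for something in $N^{\le s}\Lambda V^{\le s}$ while remaining closed and in the ideal. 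Your filtration idea is plausible but would need real work to make rigorous; the Fern\'andez--Mu\~noz route avoids that difficulty.
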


For $n \geq 2$, it will thus suffice to show that a closed \tkc{} manifold of
dimension up to
$5n-2$ with vanishing \bmp{} and $\nmt$ is $(3n{-}2)$-formal.

\begin{rmk}
\label{rmk:reduce}
To verify $s$-formality of a closed manifold of dimension $m \leq 2s+1$,
it suffices to show that for some minimal model $(\Lambda V, \psi)$ there is a choice of complements $N^i \leq V^i$ such that the closed elements of degree $\leq s$ or equal to
$m$ in $N^{\leq s} \Lambda V^{\leq s}$ are exact, by the following standard
argument.

Let $\wh \clalg^* \subseteq N^{\leq s} \Lambda V^{\leq s}$ be the subalgebra of
closed elements. If $w \in \wh\clalg^s$ with degree $s < i < m$, and the image
of $w$ in $H^i(\dga)$
is non-zero then by Poincar\'e duality there is a class $x \in H^{m-i}(\dga)$ such that $x[\psi(w)]$ is non-zero. This $x$ must be the image of some
$u \in \Lambda V^{\leq s}$. Then $uw \in \wh\clalg^m$ and its image in $H^m$
is non-zero. 
\end{rmk}

\begin{proof}[Proof of Theorem \ref{thm:formality}]
By the assumptions (and Lemma \ref{lem:trip_bmp}) there is a cochain choice $c = (\alpha,\gamma)$ such that $\trip_c = 0$ (\ie $\alpha\gamma$ takes exact values on $\tripsp$) and $\nmt_c = 0$. We will fix such a $c$.

We build a minimal model $(\Lambda V, \psi)$ recursively as follows. We define $C^{i+1}$ and $N^i$ to be the cokernel and kernel of the map $\clalg^{i+1}(\Lambda V^{<i}) \rightarrow H^{i+1}(\dga)$ induced by $\psi$, and we set $V^i = C^i \oplus N^i$. The differential is given by $d_{|C^{i+1}} = 0$ and $d_{|N^i} = \Id_{N^i} : N^i \rightarrow \clalg^{i+1}(\Lambda V^{<i}) \leq \Lambda V$. We choose $\psi_{|C^{i+1}} : C^{i+1} \rightarrow \clalg^{i+1}(\dga)$ such that the image of the composition $C^{i+1} \rightarrow \clalg^{i+1}(\dga) \rightarrow H^{i+1}(\dga)$ is a complement to the image of $\clalg^{i+1}(\Lambda V^{<i}) \rightarrow H^{i+1}(\dga)$ and $\psi_{|N^i} : N^i \rightarrow \dga^i$ such that the diagram
\[
\xymatrix{
N^i \ar^-{d}[r] \ar_-{\psi}[d] & \clalg^{i+1}(\Lambda V^{<i}) \ar^-{\psi}[d] \\
\dga^i \ar^-{d}[r] & \dga^{i+1}
}
\]
commutes (such a $\psi$ exists, but in general it is not unique). 

By construction the $(\Lambda V, \psi)$ we get is a minimal model of $\dga$. Next we give a more explicit description of the spaces $C^{i+1}$ and $N^i$ (in the relevant range), and, using an appropriate choice of $\psi$ and the assumptions on $c$, prove that $\Lambda V$ is $s$-formal (with the chosen complements $N^i$).

\begin{itemize}
\item
We must in any case take $V^i = 0$ for $0 \leq i < n$.

\item
For $i \leq 2n-2$ we must also have $N^i = 0$, because the degree $i+1$
part of $\Lambda V^{<i}$ is trivial.

\item
For $n \leq i \leq 2n-1$ we take $C^i = H^i$, and on $C^i$ we define $\psi$ by
$\alpha$.

\item For $2n-1 \leq i \leq 3n-3$, the degree $i+1$ part of $\Lambda V^{<i}$
is simply $(\gsym^2 C^{<i})^{i+1} \cong (\gsym^2 H^{<i})^{i+1}$, 
and the kernel of $(\gsym^2 C^{<i})^{i+1} \to H^{i+1}$ corresponds to
$\kc^{i+1}$. We can take $N^i = \kc^{i+1}$, and define $\psi$ on $N^i$
by $\gamma$.

\item Similarly, for $2n \leq i \leq 3n-2$ we can take $C^i$ to be a direct
complement in $H^i$ to the image of $(\gsym^2 H^{\leq i-2})^i \to H^i$,
and define $\psi$ on $C^i$ by $\alpha$.
\end{itemize}

To complete the description of the part of the minimal model that is relevant
for $(3n{-}2)$-formality, it remains only to describe $N^{3n-2}$.
Now the degree $3n-1$ part of $\Lambda V^{\leq 3n-3}$ is a direct sum of
of $(\gsym^2 C^{\leq 3n-3})^{3n-1}$ and
$C^n \otimes N^{2n-1} \cong H^n \otimes \kc^{2n}$. 
The closed subspace is the direct sum of $(\gsym^2 C^{\leq 3n-3})^{3n-1}$ and
the closed subspace $\calk \leq C^n \otimes N^{2n-1}$, which corresponds to
$\kersym{H^n \otimes \kc^{2n}}$.
On the first summand, the map to $H^{3n-1}$ is simply the cup product, so
$N^{3n-3}$ should include a summand $\wt N^{3n-2}$ isomorphic to $\kc^{3n-1}$.
On the other summand $\calk$, it is induced by the restriction of
$\alpha\gamma : H^n \otimes \kc^{2n} \to \dga^{3n-1}$, \ie it corresponds to
the uniform triple product
$\trip_c : \kersym{H^n \otimes \kc^{2n}} \to H^{3n-1}$
defined by the cochain choice $c = (\alpha, \gamma)$.

Recall that $(\alpha, \gamma)$ has been chosen such that
$\trip_c = 0$ (\ie $\alpha\gamma$ takes exact values on $\calk$).
Thus we should take $N^{3n-2} = \wt N^{3n-2} \oplus \calk$, 
defining $\psi$ by $\gamma$ on $\wt N^{3n-2}$, and by some choice of predifferential of $\alpha\gamma$ on $\calk$.

Using that we also have $\nmt_c = 0$, we now want to prove that
it is possible to choose $\psi$ on $\calk$ such that $\Lambda V$ is a $(3n{-}2)$-formal minimal model.
Let $\wh \clalg^* \subseteq N^{\leq 3n-2} \Lambda V^{\leq 3n-2}$ be the closed
subalgebra.
 Every $w \in \wh\clalg^r$ with $r \leq 3n{-}2$
 is exact in $\Lambda V$ by construction, so by Remark \ref{rmk:reduce} it
remains only to consider $\wh\clalg^m$.

To decompose $\wh\clalg^m$, set $\wt N^i = N^i$ for $i \leq 3n-3$
(so that $\wt N^i \cong \kc^{i+1}$ for each $i \leq 3n-2$),
and $\wt V = \bigoplus_{i= 0}^{3n-2} C^i \oplus \wt N^i \leq V^{\leq 3n-2}$
(so $V^{\leq 3n-2} = \wt V \oplus \calk$).
Let $\wt\clalg^* \subset (\wt N \Lambda \wt V)^*$ be the closed subalgebra.

\begin{lem}
For $m = 5n-2$, every closed element of $\wt\clalg^m$ is exact in $N \Lambda V$.
\end{lem}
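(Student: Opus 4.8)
The plan is to exploit that $\psi\colon\Lambda V\to\dga$ is a quasi-isomorphism. Since $\Lambda V=\Lambda C\oplus N\Lambda V$ (with $\Lambda C$ the subalgebra generated by the closed generators, on which $d$ vanishes), a closed $w\in\wt\clalg^m\subseteq N\Lambda V$ is exact in $N\Lambda V$ as soon as it is exact in $\Lambda V$, i.e. as soon as $[\psi(w)]=0\in H^m(\dga)$; so it suffices to prove $[\psi(w)]=0$. To organise the computation I would grade monomials by their $\wt N$-degree (the number of factors lying in $\wt N$). Elements of the ideal $\wt N\Lambda\wt V$ have $\wt N$-degree $\ge1$; since every generator of $\wt V$ has degree $\ge n$ and every generator of $\wt N$ degree $\ge2n-1$, a degree-$(5n-2)$ monomial has $\wt N$-degree $\le2$ (three $\wt N$-factors would need degree $\ge3(2n-1)>5n-2$ for $n\ge2$). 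By construction $d$ strictly lowers the $\wt N$-degree by $1$ on $\wt V$ — the only generators whose differential has positive $\wt N$-degree are those in $\calk$, which has been deleted — so any closed $w=w_1+w_2$ splits into separately closed $\wt N$-degree-$1$ and $\wt N$-degree-$2$ parts. Further degree counting shows a $\wt N$-degree-$1$ monomial is $\nu x$ or $\nu xy$ (with $\nu$ a $\wt N$-generator, $x,y$ closed generators; three closed factors are too large), and a $\wt N$-degree-$2$ monomial is $\nu\nu'$ with $\nu,\nu'$ of degrees in $[2n,3n-2]$, or $\nu\nu'x$ with $\nu,\nu'\in\wt N^{2n-1}\cong\kc^{2n}$ and $x\in C^n=H^n$ (equality of degrees being forced).

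Next I would treat $w_1$. Using $\psi=\gamma$ on each $\wt N^i\cong\kc^{i+1}$ and $\psi=\alpha$ on each $C^i$, $\psi(w_1)$ is a sum of terms $\gamma(p)\alpha(x)$ and $\gamma(p)\alpha(x)\alpha(y)$, with $p=d\nu\in\kc^*$ viewed as a quadratic expression in closed generators. Reading off $dw_1=0$ in $\Lambda C$ — taking its length-$3$ and length-$4$ parts, and using $\gsym^k C\hookrightarrow\gsym^k H$ — shows that the associated elements of $H^*\otimes\kc^*$ and of $\gsym^2\kc^*$ lie in $\tripsp$ and in $\bmsp$ respectively. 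Hence $[\psi(w_1)]$ is a value of the uniform triple product $\trip_c$ plus a value of the top-degree \bmp{} $\princ$; both vanish, since $\trip_c=0$ and, by Lemma~\ref{lem:trip_bmp}, $\princ=0$ in top degree. This is just the $\wt N$-degree-$1$ part of the formality argument of \cite{bmp} and uses only the hypothesis $\princ=0$.

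Then I would treat $w_2$. The two monomial types above have differentials in disjoint monomial spaces — distinguished by which single $\wt N$-generator survives — so $dw_2=0$ forces the ``$\gamma^2$'' part $\sum\nu_j\nu_j'$ and the ``$\alpha\gamma^2$'' part $\sum\nu_j\nu_j'x_j$ to be separately closed. Grouping $d(\sum\nu_j\nu_j')$ by the surviving $\wt N$-generator and using that $d$ carries a basis of $\wt N^k$ to a basis of $\kc^{k+1}$ forces every coefficient to be zero, so the ``$\gamma^2$'' part vanishes identically. For the ``$\alpha\gamma^2$'' part, $\psi$ produces $\sum\gamma(p_j)\gamma(p_j')\alpha(x_j)$ with $x_j\in H^n$ and $p_j,p_j'\in\kc^{2n}$; reading off $d(\sum\nu_j\nu_j'x_j)=0$ shows that $\sum_j x_j\otimes(p_j\wedge p_j')\in H^n\otimes\agsym^2\kc^{2n}$ is killed by $\rmap$, hence lies in $\ndom^{5n}$. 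This is exactly the degree-$8$ mechanism of \S\ref{subsec:special_proof} with $H^n$ in the role of $H^2$, so $[\psi(w_2)]=\pm\nmt_c\bigl(\sum_j x_j\otimes(p_j\wedge p_j')\bigr)=0$ because $\nmt_c=0$. Adding up, $[\psi(w)]=[\psi(w_1)]+[\psi(w_2)]=0$, and $w$ is exact.

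The main obstacle is the sign and symmetrisation bookkeeping in the two ``reading off $dw=0$'' steps: one must verify that the closedness relation translates precisely into membership in $\tripsp$, $\bmsp$ and $\ndom$ (not merely a larger space), respecting the graded $\gsym$/$\agsym$ conventions, and that $\psi$ on the generators genuinely agrees with $\alpha$ and $\gamma$ — up to the freedom in choosing $\psi$ on the $N$-generators, which is harmless because changing it by a closed-valued map alters $\psi(w)$ only by an exact term. A secondary point is checking that all the degree inequalities above hold for every $n\ge2$; this is exactly where the hypothesis $m=5n-2$ (rather than larger) is used.
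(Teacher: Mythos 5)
Your overall route is the same as the paper's: reducing to $[\psi(w)]=0$, grading $\wt N\Lambda\wt V^{\,m}$ by the number of $\wt N$-factors, and observing that for $m=5n-2$ only the pieces the paper calls $\subsp^m(1,1)$, $\subsp^m(2,1)$, $\subsp^m(0,2)$ and $\subsp^m(1,2)$ occur and are separately closed. Your treatment of the $(1,1)$ piece (via $\trip_c=0$), the $(0,2)$ piece (the differential $\agsym^2\kc^*\to\gsym^2H^*\otimes\kc^*$ is injective) and the $(1,2)$ piece (closedness identifies it with $\ndom$ and the induced map with $\nmt_c$) coincides with the paper's argument.

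There is one concrete misstep, in the $(2,1)$ piece (monomials $\nu xy$ with $\nu\in\wt N$, $x,y\in C$). Closedness places the associated element $\sum_j (d\nu_j)\otimes x_jy_j$ in the kernel of the symmetrisation $\kc^*\otimes\gsym^2H^*\to\gsym^4H^*$, \emph{not} in $\bmsp=\kersym{\gsym^2\kc^*}$: the products $x_jy_j$ are arbitrary elements of $\gsym^2H^*$ and need not lie in $\kc^*$, so $[\psi(w_1)]$ is not literally a value of the top-degree \bmp{}, and the step ``the associated element of $\gsym^2\kc^*$ lies in $\bmsp$'' fails as written. The paper closes exactly this gap with the Bianchi identity: the kernel of $\gsym^2H^*\otimes\kc^*\to\gsym^4H^*$ equals the image of the natural map from $H^*\otimes\tripsp$, and on that image the induced map is $\alpha\,\trip_c$, which vanishes because $\trip_c=0$. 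So your hypotheses do suffice, but the correct mechanism for the $(2,1)$ piece is again the uniform triple product rather than $\princ$ directly; with that substitution your proof matches the paper's.
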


\begin{proof}
$\Lambda \wt V$ can be decomposed as a direct sum of the graded subspaces
\[ \subsp^*(a,b) = \gsym^a C^* \otimes \gsym^b \wt N^* . \]
Because $m \leq 5n-2$, the degree $m$ part of $\wt N \Lambda \wt V$
is a direct sum of $\subsp^m(1, 1)$, $\subsp^m(2,1)$, $\subsp^m(0,2)$ and $\subsp^m(1,2)$.
The differential maps $\subsp^i(a,b) \to \subsp^{i+1}(a+2, b-1)$, so
$\wt\clalg^m$ is a direct sum of the closed
subspaces $Z^m(a,b) \leq \subsp^m(a,b)$.
It thus remains to check for each of these $Z^m(a,b)$ that
$Z^m(a,b) \to H^m$ is trivial.

On $\subsp^*(1,1) \cong H^* \otimes \kc^*$, the differential corresponds to the
symmetrisation map $H^* \otimes \kc^* \to \gsym^3 H^*$, so
$Z^*(1,1) \cong \tripsp$. $Z^*(1,1) \to H^*$ is $\trip_c$, which is trivial
by the choice we made for $c$.

On $\subsp^*(2,1) \cong \gsym^2 H^* \otimes \kc^*$, the differential
corresponds to the 
symmetrisation $\gsym^2 H^* \otimes \kc^* \to \gsym^4 H^*$.
By the Bianchi identity, its kernel $Z^*(2,1)$ equals the image of
the natural map from $H^* \otimes \tripsp$.
The map $Z^*(2,1) \to H^*$ corresponds to $\alpha \trip_c$, so again vanishes
because we ensured $\trip_c = 0$.

On $\subsp^*(0,2) \cong \agsym^2 \kc^*$ the differential is the composition $\agsym^2 \kc^* \rightarrow \kc^* \otimes \kc^* \rightarrow \gsym^2 H^* \otimes \kc^*$ of two injections, hence $Z^*(0,2) \cong 0$. 

On $\subsp^*(1,2) \cong H^* \otimes \agsym^2 \kc^*$ the differential
corresponds to $\rmap$ from \eqref{eq:rmap}, so the kernel is 
$Z^*(1,2) \cong \ndom$. The induced map $Z^*(1,2) \to H^*$ is precisely
$\nmt_c$.
\end{proof}

Now let $D^{2n} \leq \gsym^2 C^n \oplus C^{2n} \leq (\Lambda \wt V)^{2n}$
be a direct complement to $\kc^{2n}$ (so that $D^{2n} \cong H^{2n}$).

\begin{lem}
For $m = 5n-2$, the closed subspace 
$\wh\clalg^m \subseteq (N^{\leq 3n-2} \Lambda V^{\leq 3n-2})^m$ is contained in
\begin{equation}
\label{eq:decomp}
\calk \otimes D^{2n} \; \oplus \;
d\left(\calk \otimes N^{2n-1}\right) \; \oplus \; (\wt N \Lambda \wt V)^m.
\end{equation}
\end{lem}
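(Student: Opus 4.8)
The plan is to exploit the extra grading on $\Lambda V^{\le 3n-2} = \Lambda\wt V \otimes \Lambda\calk$ by the number of tensor factors coming from the degree-$(3n{-}2)$ space $\calk$, which I will call the \emph{$\calk$-weight}. Write a homogeneous element of degree $m$ as $w = \sum_{k\ge 0}w_k$ with $w_k$ of $\calk$-weight $k$. Since $\calk$ sits in degree $3n-2$ and $3(3n-2) > m = 5n-2$ while $2(3n-2)\ge m$ with equality only when $n=2$, only $w_0$ and $w_1$ can be nonzero, together with $w_2$ in the exceptional case; and in that case $w_2\in\gsym^2\calk$ has degree exactly $m$, hence involves no generator from $\wt V$. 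So it suffices to prove: $w_0\in(\wt N\Lambda\wt V)^m$; $w_1$ lies in $\calk\otimes D^{2n} + d(\calk\otimes N^{2n-1}) + (\wt N\Lambda\wt V)^m$; and $w_2=0$.

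The $\calk$-weight is multiplicative, so the ideal $N^{\le 3n-2}\Lambda V^{\le 3n-2}$ is $\calk$-weight homogeneous, with weight-$0$ part exactly the ideal $\wt N\Lambda\wt V$; hence $w_0\in(\wt N\Lambda\wt V)^m$. Every weight-$1$ monomial is a product of one generator of $\calk$ with a monomial of $\Lambda\wt V$, so I would write $w_1 = \sum_i\kappa_i\otimes v_i$ with $\{\kappa_i\}$ a basis of $\calk$ and $v_i\in(\Lambda\wt V)^{2n}$. Inspecting generator degrees gives $(\Lambda\wt V)^{2n} = \gsym^2 C^n \oplus C^{2n} \oplus \wt N^{2n}$, where $\wt N^{2n}\cong\kc^{2n+1}$ is the space of degree-$2n$ generators of $\wt N$-type (present because $2n\le 3n-2$ for $n\ge 2$); by the definition of $D^{2n}$ this is $D^{2n}\oplus\kc^{2n}\oplus\wt N^{2n}$ with $\kc^{2n}\subseteq\gsym^2 C^n$. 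Decompose $v_i = v_i' + v_i'' + v_i'''$ accordingly. Then $\sum_i\kappa_i v_i'\in\calk\otimes D^{2n}$. For the $\kc^{2n}$-part, recall that $d$ restricts to an isomorphism $N^{2n-1}\xrightarrow{\sim}\kc^{2n}$; choosing $\nu_i\in N^{2n-1}$ with $d\nu_i = v_i''$ and using that $d\kappa_i\in\calk\subseteq C^n\otimes N^{2n-1}$ — so each $(d\kappa_i)\nu_i$ carries a generator of $\wt N^{2n-1}$ and hence lies in $\wt N\Lambda\wt V$ — the identity $d\bigl(\sum_i\kappa_i\nu_i\bigr) = \sum_i\bigl((d\kappa_i)\nu_i \pm \kappa_i v_i''\bigr)$ puts $\sum_i\kappa_i v_i''$ into $d(\calk\otimes N^{2n-1}) + (\wt N\Lambda\wt V)^m$. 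It remains to kill the $\wt N^{2n}$-part $\sum_i\kappa_i v_i'''$ and, when $n=2$, the element $w_2$; this is the only place $dw=0$ enters.

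For that, examine the $\calk$-weight-$1$ component of $dw=0$. The differential lowers $\calk$-weight by at most $1$ — it replaces a $\kappa$ by $d\kappa\in\calk\subseteq C^n\otimes N^{2n-1}\subseteq\Lambda\wt V$, of weight $0$ — and on $\gsym^2\calk$ it produces purely weight-$1$ output, so this component reads $dw_2 \pm \sum_i\kappa_i\otimes dv_i = 0$. Here $v_i'$ and $v_i''$ are sums of products of closed generators (those of $C$-type and those of degree $n$) and hence are closed, so $dv_i = dv_i'''$, and $dv_i'''\in\kc^{2n+1}\subseteq C^n\cdot C^{n+1}\subseteq\Lambda\wt V$. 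Writing $w_2 = \sum_{j\le k}c_{jk}\kappa_j\kappa_k$ and $\xi_k := d\kappa_k\in\calk\subseteq C^n\otimes N^{2n-1}$ gives $dw_2 = \sum_{j,k}\tilde c_{jk}\kappa_j\xi_k$ for a symmetric $\tilde c$; grouping the equation by the leading $\kappa$-factor and using that $\{\kappa_i\}$ is a basis and $\Lambda V$ is free on its generators yields $\sum_k\tilde c_{ik}\xi_k = \mp dv_i'''$ for each $i$. But the left side lies in $C^n\otimes N^{2n-1}$ and the right side in $C^n\otimes C^{n+1}$, which intersect only in $0$ in $\Lambda V$ (the generator spaces $N^{2n-1}$ and $C^{n+1}$ being distinct summands of $V$), so both sides vanish. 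Injectivity of $d$ on $\wt N^{2n}$ then gives $v_i'''=0$, and linear independence of $\{\xi_k\}$ — a consequence of injectivity of $d$ on $\calk$ — gives $\tilde c = 0$, \ie $w_2=0$. Hence $w = w_0 + \sum_i\kappa_i(v_i'+v_i'')$ lies in the asserted subspace. I expect the main obstacle to be bookkeeping rather than conceptual: one has to compute $(\Lambda\wt V)^{2n}$ correctly — in particular not overlook the degree-$2n$ generators $\wt N^{2n}\cong\kc^{2n+1}$ — and keep track of which of its summands consist of closed elements, so that the single use of $dw=0$ can be directed precisely at the $\wt N^{2n}$-contribution (for all $n\ge 2$) and the $\gsym^2\calk$-contribution (only for $n=2$).
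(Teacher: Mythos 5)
Your proof is correct and follows essentially the same route as the paper's: split off the $\calk$-free part of the ideal as $(\wt N \Lambda \wt V)^m$, identify the remaining weight-one piece with $\calk \otimes (\Lambda \wt V)^{2n} = \calk\otimes(D^{2n}\oplus \kc^{2n}\oplus \wt N^{2n})$, use closedness together with injectivity of $d : \wt N^{2n} \to C^n\otimes C^{n+1}$ to kill the $\calk\otimes\wt N^{2n}$ contribution, and absorb $\calk\otimes \kc^{2n}$ into $d(\calk\otimes N^{2n-1}) + (\wt N\Lambda\wt V)^m$ via the Leibniz rule. You are in fact a little more careful than the paper at one point: when $n=2$ the degree-$m$ part of the ideal also contains $\gsym^2\calk$ (since $2(3n-2)=5n-2$ exactly then), a summand the paper's ``for degree reasons'' decomposition silently omits, and your analysis of the weight-one component of $dw=0$ correctly shows that this piece vanishes on closed elements, so the lemma as stated is unaffected.
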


\begin{proof}
For degree reasons we have
$(N^{\leq 3n-2} \Lambda V^{\leq 3n-2})^m =
(\wt N \Lambda \wt V)^m \; \oplus \; \calk \otimes (\Lambda \wt V)^{2n}$.

The differential maps $\calk \otimes N^{2n}$ to
$\calk \otimes C^n \otimes C^{n+1} \; \oplus \;
C^n \otimes N^{2n+1} \otimes N^{2n}$.
The composition with the projection to the first summand is injective because
the differential $N^{2n} \to C^n \otimes C^{n+1}$ is.
Therefore $\wh\clalg^m$ must be contained in
$(\wt N \Lambda \wt V)^m \oplus \calk \otimes (C^{2n} \oplus \gsym^2 C^n) = 
(\wt N \Lambda \wt V)^m \oplus \calk \otimes (D^{2n} \oplus \kc^{2n})$.
It thus remains only to note that for any element
$ke \in \calk \otimes \kc^{2n}$, there is a $y \in N^{2n-1}$ with $dy = e$,
so $ke - d(ky) = y(dk)  \in (\wt N \Lambda \wt V)^m$.
\end{proof}

Finally we deduce that $\psi : \calk \to \dga^{m-2n}$ can be chosen so that that
every element in the closed subspace
$\wh\clalg^m \subseteq (N^{\leq 3n-2} \Lambda V^{\leq 3n-2})^m$ is exact in
$\Lambda V$.
Let $W \subseteq \wh\clalg^m$ be a direct complement to $\wt\clalg^m
\oplus d\left(\calk \otimes N^{2n-1}\right)$.
We already know that $\psi_* : \wt\clalg^m
\oplus d\left(\calk \otimes N^{2n-1}\right) \to H^m$ is trivial.
On the other hand, the projection to the $\calk \otimes D^{2n}$ summand in
\eqref{eq:decomp} is injective on $W$. Therefore by Poincar\'e duality we
can adjust the values of $\psi_* : W \to H^*$ arbitrarily by
changing the choice of
$\psi$ on the $\calk$ summand by adding on some map $\calk \to \clalg^{m-2n}$.
In particular we can choose it such that $\psi_*$ vanishes on all
of $\wh\clalg^m$.
This completes the proof of Theorem \ref{thm:formality}.
\end{proof}

\begin{rmk}
\label{rmk:gen_bmp}
If one extended the dimension range to consider closed \tkc{} manifolds of
dimension $m \leq 6n-4$, the only part of the proof that would fail is that
the degree $m$ part of $\wt N \Lambda \wt V$ could also have a contribution
from $\subsp^m(3,1)$, so $\wt\clalg^m$ could have a non-trivial $Z^m(3,1)$
summand. We expect that to prove formality in this case would
require a further generalisation of the \bmp.

For any $r$, we could consider the kernel $E_r^*$ of the $r$-fold product
$\gsym^r H^* \to H^*$. Given a right inverse $\alpha : H^* \to \clalg^*$,
one could then pick a predifferential $\gamma_r : E_r^* \to \dga^{*-1}$
of the restriction to $\kc_r^*$ of $\alpha^r : \gsym^r H^* \to \dga^*$.
The restriction of the degree $-1$ map
\[ \gamma_r \alpha^s : E_r^* \otimes E_s^* \to \dga^{*-1} \]
to the kernel $\kersym{E_r^* \otimes E_s^*}$ of full graded
symmetrisation $E_r^* \otimes E_s^* \to \gsym^{r+s}H^*$ takes closed values.
The induced map $\princ_{r,s} : \kersym{E_r^* \otimes E_s^*} \to H^{*-1}$ is
clearly independent of the choice of $\gamma$, and seems likely to be
independent of the choice of $\alpha$ too.

$\princ_{2,2}$ amounts to the Bianchi-Massey tensor. Depending on the product
structure on $H^*$, other $\princ_{r,s}$ may be determined by the
Bianchi-Massey tensor too, but it seems it does not always need to be.
Vanishing of $\princ_{3,2}$ seems relevant for formality a \tkc{} manifold of
dimension $5n-2 < m \leq 6n-4$.

Closed \tkc{} manifolds of dimension $> 6n-4$ can have non-trivial 5-fold
Massey products, so any criterion for formality of such manifolds would
need to include some invariant that controls those products.
\end{rmk}

\subsection{Using \texorpdfstring{$\nmt$}{P} to distinguish and classify manifolds}

For an isomorphism $F : H^*(X) \to H^*(Y)$ to be realised by a rational
homotopy equivalence, it is clearly necessary that there are some
choices $b$ and $c$ on $X$ and $Y$ such that $F$ intertwines
the uniform triple products $\trip_b$ and $\trip_c$, and
the pentagonal Massey tensors $\nmt_b$ and $\nmt_c$.
Supposing that we can make the choices so that $\trip_b$ and $\trip_c$
are intertwined, let us now consider how to measure the failure of the
pentagonal Massey tensors to agree.

Let $\Delta \subseteq \hom(\ndom^*, H^{*-2})$ be the space of degree $-2$ maps
$\ndom^* \to H^{*-2}$ generated by functions
of the form $\trip_c\delta \circ (\idf j)$, for
$\delta : \kc^* \to H^{*-1}$ such that the restriction
of $\delta\idf$ to $\tripsp$ vanishes.
Then Lemma \ref{lem:same_trip} (together
with Theorem \ref{thm:choices}) means that whenever two choices $c'$ and
$c$ have $\trip_{c'} = \trip_c$, then $\nmt_{c'} - \nmt_c \in \Delta$.
In consequence, if there are some cochain choices $b$ on $X$ and $c$ on $Y$
such that $\trip_b$ and $\trip_c$ are intertwined by $F$, then
\[ F^\# \nmt_c - \nmt_b \in \hom(\ndom^*, H^{*-2})/\Delta \]
takes the same value for all such pairs of choices, and vanishes if and only
if there is some pair for which $F^\# \nmt_c = \nmt_b$.

In the case of closed manifolds, the existence of choices such that
$\trip_b$ and $\trip_c$ are intertwined by $F$ is equivalent to the more
convenient condition that the \bmp s (which do not depend on choices at all)
are intertwined by $F$ \cite[Lemma 2.8(ii)]{bmp}.

For closed \tkc{} manifolds of dimension up to $5n-3$, Massey products of order
4 or higher (and analogous data like $\nmt$) are irrelevant for degree reasons.
In this case, \cite[Theorem~1.2]{bmp} shows that $F$ is realised by
a rational homotopy equivalence if and only it intertwines the \bmp s.

For closed \tkc{} manifolds of dimension up to $6n-4$, Massey products of
order 5 or greater are irrelevant. While we expect that $\nmt$ captures
all the ``fourfold product like'' information, Remark \ref{rmk:gen_bmp}
indicates that some data may need to be added to the Bianchi-Massey 
and pentagonal Massey tensors to determine the rational homotopy type if the
dimension is greater than $5n-2$.
However, the following statement is reasonable in view of the formality
criterion of Theorem \ref{thm:main2}.

\begin{conj}
\label{conj:type}
Let $F : H^*(X) \to H^*(Y)$ be an isomorphism of the cohomology algebras
of closed \tkc{} $m$-manifolds, for $n > 1$ and $m \leq 5n-2$.
Then $F$ is realised by a rational homotopy equivalence if and only if
\begin{itemize}
\item $F$ intertwines the \bmp s, and
\item for any choices $b$ and $c$ such that $F$ intertwines $\trip_b$
and $\trip_c$, we have $F^\# \nmt_c - \nmt_b \in \Delta$. 
\end{itemize}
\end{conj}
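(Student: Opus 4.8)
The plan is to follow the proof of the analogous classification result \cite[Theorem~1.2]{bmp} for the range $m \leq 5n-3$ and to extend it one dimension further by tracking the pentagonal Massey tensor. For $m \leq 5n-3$ the degrees in which $\nmt$ (and hence $\Delta$) can be nonzero lie above $m$, so the second condition is vacuous and the statement is exactly \cite[Theorem~1.2]{bmp}; thus only the borderline case $m = 5n-2$ is new. Necessity of the two conditions is naturality: a rational homotopy equivalence realising $F$ transports a cochain choice on one manifold to one on the other, and by the evident extension of Propositions~\ref{prop:nat} and~\ref{prop:nat2} it then intertwines the (choice-independent) \bmp s, the uniform triple products, and the pentagonal Massey tensors on the nose, so in particular $F^\#\nmt_c-\nmt_b$ represents $0$ in $\hom(\ndom^*,H^{*-2})/\Delta$. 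The substance is the converse.

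For sufficiency I would first normalise the cochain choices. As $F$ intertwines the \bmp s, \cite[Lemma~2.8(ii)]{bmp} provides choices $b$ on $X$ and $c$ on $Y$ with $F$ intertwining $\trip_b$ and $\trip_c$. By \eqref{eq:uni_rule} any $\delta:\kc^*\to H^{*-1}$ with $\idf\delta|_{\tripsp}=0$ yields a choice $b+\delta$ with the same uniform triple product, and by Theorem~\ref{thm:choices} together with Lemma~\ref{lem:same_trip} one gets $\nmt_{b+\delta}-\nmt_b=\trip_b\delta\circ(\idf j)$ (the $\idf\delta^2$ term vanishing on $\ndom$); iterating shows $\{\nmt_{b'}-\nmt_b:\trip_{b'}=\trip_b\}=\Delta$. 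Hence the hypothesis $F^\#\nmt_c-\nmt_b\in\Delta$ allows us to replace $b$ by a choice still intertwined with $c$ at the level of triple products and now satisfying $F^\#\nmt_c=\nmt_b$ exactly. Fix such fully compatible $b$ and $c$.

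Next I would build minimal Sullivan models $(\Lambda V_X,\psi_X)$ and $(\Lambda V_Y,\psi_Y)$ by the recursion used in the proof of \cite[Theorem~1.2]{bmp} (and of Theorem~\ref{thm:formality}), carried out in parallel and interleaved with the construction, degree by degree, of an isomorphism $\Phi:\Lambda V_X\to\Lambda V_Y$ with $H^*(\Phi)$ corresponding to $F$. Through degree $3n-3$ the generators are the functorial pieces $C^i\cong H^i$ and $\wt N^i\cong\kc^{i+1}$, whose differentials are expressed purely in terms of the graded algebra, so $F$ (which also induces isomorphisms of the algebra-natural subspaces $\kc^*$, $\tripsp$, $\ndom$) determines $\Phi$ there. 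In degree $3n-2$ the relevant new data is the uniform triple product, which is built into the differential and into $\psi$ on the $\calk$-summand; since $b$ and $c$ are chosen so that $F$ intertwines $\trip_b$ and $\trip_c$, the two constructions correspond and $\Phi$ extends over $V^{3n-2}$. Extending $\Phi$ further is governed, as in the proof of Theorem~\ref{thm:formality}, by the composite $\wh\clalg^m\to H^m$ in the truncated model, which splits over $Z^m(1,1)\cong\tripsp$, $Z^m(2,1)$, $Z^m(0,2)=0$ and $Z^m(1,2)\cong\ndom$ with maps $\trip_c$, $\alpha\trip_c$, $0$ and $\nmt_c$ to $H^m$; as $F$ now intertwines both $\trip$ and $\nmt$, these correspond under $\Phi$, so $\Phi$ extends over the degree-$(m-1)$ generators and, after disposing of the intermediate degrees $3n-1\leq r<m$ by the Poincar\'e duality reduction of Remark~\ref{rmk:reduce}, over all generators. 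The resulting isomorphism $\Phi:\Lambda V_X\isom\Lambda V_Y$ is a quasi-isomorphism inducing $F$, so $X$ and $Y$ are rationally homotopy equivalent via a map realising $F$.

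The main obstacle I expect is the final extension step: showing that across degrees $3n-1,\dots,5n-2$ the obstruction to extending $\Phi$ involves nothing beyond $\trip_c$ and $\nmt_c$. The bound $m\leq 5n-2$ is sharp here, since for $m>5n-2$ a $\subsp^m(3,1)$ contribution appears (Remark~\ref{rmk:gen_bmp}) and would demand a further invariant such as $\princ_{3,2}$; so the argument must be tailored to the borderline dimension, and the Poincar\'e duality reduction of Remark~\ref{rmk:reduce}, which in the formality proof only had to exhibit one nonzero pairing, must be performed compatibly with $\Phi$ and $F$ throughout the truncated model. A secondary difficulty is purely organisational: the many choices of $\psi$ on new generators must be coordinated so that ``$H^*(\Phi)$ corresponds to $F$'' persists at every stage, not merely modulo the ambiguity parametrised by $\Delta$.
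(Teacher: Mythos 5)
The statement you are addressing is Conjecture \ref{conj:type}: the paper does not prove it, and explicitly offers it only as ``reasonable in view of the formality criterion''. So there is no proof of the authors' to compare yours against, only the formality proof (Theorem \ref{thm:formality}) that your argument would have to generalise. Your reduction to the borderline case $m=5n-2$, the necessity direction, and the normalisation step are sound; in particular the set of differences $\nmt_{b'}-\nmt_b$ over choices $b'$ with $\trip_{b'}=\trip_b$ is indeed all of $\Delta$ (it is already a linear subspace, since $\delta\mapsto(\trip_b\delta)\circ(\idf\, j)$ is linear on the subspace of admissible $\delta$ and every such $\delta$ is realised by some $b'$), so you may assume $F^{\#}\nmt_c=\nmt_b$ exactly.

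The gap is in the sufficiency argument, at precisely the step you yourself flag as ``the main obstacle I expect''. Remark \ref{rmk:reduce} is a device for verifying $s$-formality: it lets one test exactness of closed elements only in degrees $\leq s$ and $=m$. It does not provide a mechanism for extending an isomorphism $\Phi$ of minimal models over the generators in degrees $3n-1,\dots,m-1$, whose differentials and $\psi$-values are constructed from the specific DGAs and are not visibly determined by $(H^*,\trip,\nmt)$. What is needed instead is the rigidity statement underlying \cite[Theorem~1.2]{bmp}: that for $m\leq 2s+1$ the rational homotopy type of a closed manifold is determined by its $s$-partial minimal model together with the degree-$m$ pairing data on it, and then a verification that this pairing data is pinned down --- modulo the residual freedom of altering $\psi$ on the $\calk$ summand, which moves the pairing arbitrarily on the complement $W$ of $\wt\clalg^m\oplus d(\calk\otimes N^{2n-1})$ --- by $\trip_c$ and $\nmt_c$ through the decomposition into $Z^m(1,1)$, $Z^m(2,1)$, $Z^m(0,2)$, $Z^m(1,2)$ and the $\calk\otimes D^{2n}$ summand of \eqref{eq:decomp}. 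In the formality proof this freedom is used to kill the pairing on $W$; in the classification setting one must show it suffices to match the pairings of the two manifolds, and that no further invariant enters (exactly what fails for $m>5n-2$ by Remark \ref{rmk:gen_bmp}). None of this is carried out in your proposal, and it is the entire content of the conjecture beyond the known range; as written, the proposal is a credible programme --- essentially the one the authors have in mind --- but not a proof.
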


In view of Remark \ref{rmk:bmt_zero}, Conjecture \ref{conj:special} is a
special case of Conjecture \ref{conj:type}.

\pagebreak[2]

\bibliographystyle{amsinitial}
\bibliography{g2geom}

\end{document}